\documentclass[a4paper,fleqn]{cas-sc}

\usepackage[authoryear]{natbib}

\def\tsc#1{\csdef{#1}{\textsc{\lowercase{#1}}\xspace}}
\tsc{WGM}
\tsc{QE}

\usepackage{graphicx}
\usepackage{multirow}
\usepackage{amsmath, amssymb, amsfonts, amsthm}
\usepackage{mathrsfs}
\usepackage{xcolor}
\usepackage{textcomp}
\usepackage{manyfoot}
\usepackage{booktabs}
\usepackage{algorithm, algorithmicx, algpseudocode}
\usepackage{listings}
\usepackage{mathtools}
\usepackage{subcaption}
\usepackage{tikz}
\usetikzlibrary{arrows.meta}
\usepackage{tikz-cd}
\usepackage{wrapfig}
\usepackage{pgfplots}
\usepackage{bm}
\usepackage{todonotes}
\usepackage{hyperref}
\usepackage{bookmark}
\usepackage[most]{tcolorbox}
\usepackage[nameinlink]{cleveref}
\usepackage{float}
\usepackage{stackengine}

\hypersetup{
    colorlinks,
    linkcolor={red!50!black},
    citecolor={blue!50!black},
    urlcolor={blue!80!black}
}
\pgfplotsset{compat=1.18}
\newcommand{\bbA}{\mathbb{A}}
\newcommand{\bbB}{\mathbb{B}}
\newcommand{\bbC}{\mathbb{C}}
\newcommand{\bbD}{\mathbb{D}}
\newcommand{\bbE}{\mathbb{E}}

\newcommand{\bbG}{\mathbb{G}}
\newcommand{\bbH}{\mathbb{H}}

\newcommand{\bbM}{\mathbb{M}}
\newcommand{\bbN}{\mathbb{N}}

\newcommand{\bbP}{\mathbb{P}}

\newcommand{\bbR}{\mathbb{R}}

\newcommand{\bbT}{\mathbb{T}}
\newcommand{\bbU}{\mathbb{U}}

\newcommand{\bbW}{\mathbb{W}}

\newcommand{\bfA}{\mathbf{A}}  
\newcommand{\bfB}{\mathbf{B}}

\newcommand{\bfH}{\mathbf{H}}

\newcommand{\bfL}{\mathbf{L}}

\newcommand{\bfe}{\mathbf{e}}
\newcommand{\bff}{\mathbf{f}}

\newcommand{\bfh}{\mathbf{h}}

\newcommand{\bfn}{\mathbf{n}}

\newcommand{\bft}{\mathbf{t}}
\newcommand{\bfu}{\mathbf{u}}
\newcommand{\bfv}{\mathbf{v}}

\newcommand{\bfx}{\mathbf{x}}

\newcommand{\bfzeta}{\bm{\zeta}}
\newcommand{\bfeta}{\bm{\eta}}

\newcommand{\bflambda}{\bm{\lambda}}
\newcommand{\bfmu}{\bm{\mu}}
\newcommand{\bfnu}{\bm{\nu}}

\newcommand{\bfrho}{\bm{\rho}}

\newcommand{\bfsigma}{\bm{\sigma}}

\newcommand{\bfphi}{\bm{\phi}}

\newcommand{\bfchi}{\bm{\chi}}
\newcommand{\bfpsi}{\bm{\psi}}
\newcommand{\bfomega}{\bm{\omega}}

\newcommand{\bfzero}{\mathbf{0}}  

\newcommand{\calD}{\mathcal{D}}
\newcommand{\calE}{\mathcal{E}}
\newcommand{\calF}{\mathcal{F}}

\newcommand{\calH}{\mathcal{H}}

\newcommand{\calK}{\mathcal{K}}

\newcommand{\calT}{\mathcal{T}}

\newcommand{\rmA}{\mathrm{A}}  

\newcommand{\rmH}{\mathrm{H}}
\newcommand{\rmI}{\mathrm{I}}

\newcommand{\rmd}{\mathrm{d}}

\newcommand{\sfS}{\mathsf{S}}

\newcommand{\frakH}{\mathfrak{H}}

\let\Re\relax
\newcommand{\Re}{\mathrm{Re}}  

\let\div\relax
\DeclareMathOperator{\div}{div}

\let\curl\relax
\DeclareMathOperator{\curl}{curl}

\DeclareMathOperator{\grad}{grad}

\DeclareMathOperator{\sym}{sym}

\newcommand{\leftavg}{\{\!\!\{}
\newcommand{\rightavg}{\}\!\!\}}

\newcommand{\leftjump}{[\![}
\newcommand{\rightjump}{]\!]}

\newtheorem{theorem}{Theorem}[section]

\newtheorem{definition}[theorem]{Definition}

\newtheorem{lemma}[theorem]{Lemma}

\newtheorem{remark}[theorem]{Remark}
\newtheorem{remark*}{Remark}

\usepackage{etoolbox}
\AtBeginEnvironment{assumption} {\crefalias{theorem}{assumption}}
\AtBeginEnvironment{conjecture} {\crefalias{theorem}{conjecture}}
\AtBeginEnvironment{corollary}  {\crefalias{theorem}{corollary}}
\AtBeginEnvironment{definition} {\crefalias{theorem}{definition}}
\AtBeginEnvironment{example}    {\crefalias{theorem}{example}}
\AtBeginEnvironment{framework}  {\crefalias{theorem}{framework}}
\AtBeginEnvironment{lemma}      {\crefalias{theorem}{lemma}}
\AtBeginEnvironment{proposition}{\crefalias{theorem}{proposition}}
\AtBeginEnvironment{remark}     {\crefalias{theorem}{remark}}
\AtBeginEnvironment{result}     {\crefalias{theorem}{result}}

\crefname{assumption}{Assumption}{Assumptions}
\Crefname{assumption}{Assumption}{Assumptions}
\crefname{conjecture}{Conjecture}{Conjectures}
\Crefname{conjecture}{Conjecture}{Conjectures}
\crefname{corollary}{Corollary}{Corollaries}
\Crefname{corollary}{Corollary}{Corollaries}
\crefname{definition}{Definition}{Definitions}
\Crefname{definition}{Definition}{Definitions}
\crefname{example}{Example}{Examples}
\Crefname{example}{Example}{Examples}
\crefname{framework}{Framework}{Frameworks}
\Crefname{framework}{Framework}{Frameworks}
\crefname{lemma}{Lemma}{Lemmas}
\Crefname{lemma}{Lemma}{Lemmas}
\crefname{proposition}{Proposition}{Propositions}
\Crefname{proposition}{Proposition}{Propositions}
\crefname{remark}{Remark}{Remarks}
\Crefname{remark}{Remark}{Remarks}
\crefname{result}{Result}{Results}
\Crefname{result}{Result}{Results}

\makeatletter
\newif\if@eqrefsfirst
\DeclareRobustCommand{\@eqrefsitem}[1]{\if@eqrefsfirst\@eqrefsfirstfalse\else,~\fi\ref{#1}}
\DeclareRobustCommand{\eqrefs}[1]{\textup{\tagform@{\@eqrefsfirsttrue\forcsvlist\@eqrefsitem{#1}}}}
\makeatother

\definecolor{seaborngreen}{rgb}{0.3333333333333333, 0.6588235294117647, 0.40784313725490196}  
\definecolor{seaborncyan}{rgb}{0.39215686274509803, 0.7098039215686275, 0.803921568627451}  
\definecolor{seabornblue}{rgb}{0.2980392156862745, 0.4470588235294118, 0.6901960784313725}  
\definecolor{seabornpurple}{rgb}{0.5058823529411764, 0.4470588235294118, 0.6980392156862745}  
\definecolor{seabornred}{rgb}{0.7686274509803922, 0.3058823529411765, 0.3215686274509804}  
\definecolor{seabornorange}{rgb}{0.958, 0.476, 0.206}  
\definecolor{seabornsand}{rgb}{0.8, 0.7254901960784313, 0.4549019607843137}  
\definecolor{matinbg}{rgb}{0.75, 0.90, 1.0}      
\definecolor{matintext}{rgb}{0.098, 0.098, 0.44}  

\newif\ifdraft
\drafttrue
\ifdraft
  \newcommand{\bda}[1]{\todo[color=seabornpurple!40]{{\bf BDA:} #1}}
  \newcommand{\bdainline}[1]{\todo[color=seabornpurple!40,inline]{{\bf BDA:} #1}}
  \newcommand{\pef}[1]{\todo[color=seabornblue!40,inline]{{\bf PEF:} #1}}
  \newcommand{\pefnotinline}[1]{\todo[color=seabornblue!40]{{\bf PEF:} #1}}
  \newcommand{\matin}[1]{\todo[color=matinbg]{{\bf MS:} #1}}
  \newcommand{\matininline}[1]{\todo[color=matinbg,inline]{{\bf MS:} #1}}
  
\else
  \newcommand{\bda}[1]{}
  \newcommand{\bdainline}[1]{}
  \newcommand{\pef}[1]{}
  \newcommand{\pefnotinline}[1]{}
  \newcommand{\matin}[1]{}
  \newcommand{\matininline}[1]{}
  
\fi

\begin{document}
\let\WriteBookmarks\relax
\def\floatpagepagefraction{1}
\def\textpagefraction{.001}

\shorttitle{Enstrophy preservation in Navier--Stokes}    
\shortauthors{B.~D.~Andrews, M.~Shams, P.~E.~Farrell}  

\title [mode = title]{Strongly enstrophy-stable integrators for the incompressible Navier--Stokes equations}   
\tnotemark[1] 
\tnotetext[1]{This work was funded by
the European Union [ERC, GeoFEM, 101164551],
the Engineering and Physical Sciences Research Council (EPSRC) [grant number EP/W026163/1],
the Science and Technology Facilities Council [grant number UKRI/ST/B000495/1],
the EPSRC Energy Programme [grant number EP/W006839/1],
a CASE award from the UK Atomic Energy Authority,
the Donatio Universitatis Carolinae Chair ``Mathematical modelling of multicomponent systems'',
the UKRI Digital Research Infrastructure Programme through the Science and Technology Facilities Council's Computational Science Centre for Research Communities (CoSeC),
and
the National Science Foundation under grant no.~DMS-1929284 while in residence at the Institute for Computational and Experimental Research in Mathematics in Providence, USA.
The authors gratefully acknowledge the hospitality of the Erwin Schrödinger International Institute for Mathematics and Physics, Vienna, where part of this work was carried out.
Views and opinions expressed are, however, those of the authors only and do not necessarily reflect those of the European Union or the European Research Council. Neither the European Union nor the granting authority can be held responsible for them.
For the purpose of open access, the authors have applied a CC BY public copyright licence to any author accepted manuscript arising from this submission. No new data were generated or analysed during this study. The code used to produce the numerical results is openly available \citep{Andrews_2026_GitHub}.}

\author[1]{Boris D.~Andrews}[orcid=0000-0003-1769-3432]
\cormark[1]
\ead{boris.andrews@maths.ox.ac.uk}
\credit{Conceptualization, Software, Investigation, Visualization, Writing -- original draft, Writing -- review \& editing}

\author[1]{Matin Shams}[orcid=0009-0004-2597-1564]
\ead{matinshams81@gmail.com}
\credit{Software, Investigation, Visualization, Writing -- review \& editing}

\author[1,2]{Patrick E.~Farrell}[orcid=0000-0002-1241-7060]
\ead{patrick.farrell@maths.ox.ac.uk}
\credit{Supervision, Funding acquisition, Investigation, Writing -- review \& editing}


\affiliation[1]{organization={Mathematical Institute, University of Oxford}, 
            city={Oxford},
            country={United Kingdom}}

\affiliation[2]{organization={Mathematical Institute, Charles University}, 
            city={Prague},
            country={Czech Republic}}

\cortext[1]{Corresponding author}

\begin{abstract}
We propose a mixed finite element discretisation for the incompressible Navier--Stokes equations that preserves the evolution laws of both energy and enstrophy, in a stronger sense than previous discretisations.
In two dimensions, the evolution law for enstrophy only permits dissipation for thermodynamically isolated systems, leading to a Reynolds-number-independent bound on the velocity gradient that naturally stabilises the scheme, even on severely under-resolved meshes.
In three dimensions, the scheme preserves both dissipation and the generation of enstrophy through vortex stretching.
We enforce these evolution laws by systematically introducing auxiliary variables into the discretisation.
While conforming implementations of these schemes require discrete Stokes complexes with enhanced regularity, we introduce both (i)~equivalent reparametrisations and (ii)~penalty formulations that require only the typical curl- and div-conforming spaces from the standard discrete de Rham complex.
The scheme handles different types of boundary conditions and curved domains.
The robust stabilisation properties of the proposed scheme are demonstrated through numerical simulations of a shear flow, a spherical vortex, and flow past an obstacle.
We observe numerically that preserving the discrete evolution of enstrophy in this way has a strong stabilising effect on the numerical solution, especially in two dimensions.
\end{abstract}

\begin{keywords}
incompressible Navier--Stokes equations \sep enstrophy \sep mixed finite elements \sep structure-preserving discretisation \sep stabilisation \sep finite element exterior calculus
\end{keywords}

\maketitle
\color{black}


We consider the incompressible Navier--Stokes equations over a bounded Lipschitz domain $\Omega \subset \bbR^n$ for $n \in \{2,3\}$.
At a given time $t$, the strong form of the equations seeks $(\bfu(t), p(t)) : \Omega \to \bbR^n \times \bbR$ such that
\begin{equation}\label{eq:navier-stokes_1}
    \partial_t {\bfu}  =  \bfu \times \curl \bfu - \nabla \left[ \, \frac{1}{2}|\bfu|^2 + p \, \right] + \frac{2}{\Re}\div\varepsilon\,\bfu,  \qquad
    0  =  \div \bfu.
\end{equation}
We use the rotational or Lamb form $\bfu \cdot \nabla\bfu = - \, \bfu\times\curl\bfu + \nabla[\frac{1}{2} |\bfu|^2]$ for the advective term.
The operator $\varepsilon$ denotes the symmetric gradient $\varepsilon \, \bfu \coloneqq \sym \grad \bfu$, and $\Re$ the Reynolds number.

\paragraph{Energy.}

Under suitable ({thermodynamically isolated}) boundary conditions, testing {\eqref{eq:navier-stokes_1}} against $\bfu$ identifies the energy dissipation law:
\begin{equation}\label{eq:energy}
    \calK(\bfu)  \coloneqq  \frac{1}{2}\|\bfu\|^2,  \qquad
    \partial_t \calK  =  - \, \frac{2}{\Re} \|\varepsilon \, \bfu\|^2  \le  0,
\end{equation}
where $\|\cdot\|$ denotes the $L^2(\Omega)$ norm.
Assuming the initial data have bounded energy $\calK(\bfu(0))$, integration implies the bound on the time integral of enstrophy
\begin{equation}\label{eq:energy_bound}
    2 \int_{t=0}^T \|\varepsilon \, \bfu\|^2  \le  \Re \, \calK(\bfu(0)),
\end{equation}
for any final time $T \in (0, \infty)$ up to which a solution exists.
This in turn implies a bound for $\|\nabla \bfu\|$ by Korn's inequality, under certain boundary conditions.

The behaviour of $\|\nabla \bfu\|$ in time $t$ is central to the analysis of the Navier--Stokes equations.
An $L^4$-in-time bound on $\|\nabla \bfu\|$ is sufficient to guarantee uniqueness for weak solutions \citep[Sec.~4(c)]{Robinson_2020}\footnote{
    The precise Prodi--Serrin uniqueness condition under consideration here would typically be expressed as $\bfu \in L^4(0, T; \bfL^6)$ \citep{Prodi_1959,Serrin_1963}.
    This gradient form is sufficient by the continuous embedding $\bfH^1 \hookrightarrow \bfL^6$ \citep{BeiraoDaVeiga_1995}.
}, while a stronger, uniform ($L^\infty$-in-time) bound quantifies the transition from a weak to strong solution \citep[Sec.~5(b)]{Robinson_2020}.
On the discrete level, a bound on $\|\nabla \bfu\|$ implies that continuous finite elements may sensibly be applied to discretise the velocity.
However, the bound from energy dissipation \eqref{eq:energy_bound} is both (i)~$L^2$-in-time only, and (ii)~not $\Re$-robust, i.e.~it scales with the Reynolds number $\Re$.
The former presents an issue for the use of this bound in the analysis.
The latter implies that for large $\Re$, this bound loses its utility, leading to instability in numerical solutions (see \Cref{sec:simulations} below);
this issue is particularly clear in the Euler case $\Re = \infty$, where the bound on $\|\nabla \bfu\|$ is lost entirely.

\paragraph{Enstrophy.}

By testing \eqref{eq:navier-stokes_1} against $\Delta \bfu$, one identifies a second structure of interest, an evolution law for the enstrophy\footnote{
    The enstrophy is often defined as the $\bfL^2$ norm of the vorticity, in particular when working with a stream function formulation.
    The two definitions are equivalent only under certain boundary conditions (see \Cref{sec:scheme}).
    Defined as the $\bfL^2$ norm of the symmetric gradient, i.e.~the natural Dirichlet energy, the enstrophy continues to satisfy a dissipation structure even under more general boundary conditions (see \Cref{sec:bcs}).
}:
\begin{subequations}\label{eq:enstrophy_both}
\begin{equation}\label{eq:enstrophy}
    \calE(\bfu)  \coloneqq  \|\varepsilon \, \bfu\|^2,  \qquad
    \partial_t \calE  =  \int_\Omega \bfu \cdot (\curl \bfu \times \curl^2 \bfu) - \frac{1}{\Re} \|\Delta\bfu\|^2.
\end{equation}
Again, if Korn's inequality is available, a finite enstrophy implies a bound for $\|\nabla\bfu\|$, and bounds on the enstrophy are useful for proving the existence of unique strong solutions \citep{Robinson_2020}.
Unlike the energy-derived bound \eqref{eq:energy_bound}, which is only $L^2$-in-time and scales with $\Re$, a uniform-in-time bound on $\calE$ would yield a uniform-in-time bound on $\|\nabla\bfu\|$.
As discussed below, in 2D the advective term $\bfu \cdot (\curl \bfu \times \curl^2 \bfu)$ in \eqref{eq:enstrophy} vanishes identically, so $\calE$ is non-increasing for thermodynamically isolated systems and hence bounded by $\calE(\bfu(0))$ independently of $\Re$.

In 3D, the advective term in \eqref{eq:enstrophy} does not cancel in general, and the evolution of enstrophy is more complex than a dissipation law.
With appropriate boundary conditions, this identity can be manipulated into the classical \emph{vortex stretching} form,
\begin{equation}\label{eq:enstrophy_stretching_continuous}
    \partial_t \calE  =  \int_\Omega \curl\bfu \cdot \varepsilon\,\bfu \cdot \curl\bfu - \frac{1}{\Re} \|\Delta\bfu\|^2.
\end{equation}
\end{subequations}
Vortex stretching (or conversely \emph{vortex compression}) is the mechanism by which enstrophy is generated (or dissipated) in three dimensions (\citealp[Sec.~5.4]{Doering_Gibbon_1995}; \citealp[Sec.~7.2]{Majda_Bertozzi_2002}).
A vortex tube aligned with a positive eigendirection of the strain rate tensor $\varepsilon\,\bfu$ causes an increase in the enstrophy $\calE$.
Physically, the vortex tube is elongated by the flow, increasing in magnitude in the process due to conservation of angular momentum (\Cref{fig:vortex_stretching}).
This is quantified by the vortex stretching term $\curl\bfu \cdot \varepsilon\,\bfu \cdot \curl\bfu$ in \eqref{eq:enstrophy_stretching_continuous}, positive precisely when $\curl\bfu$ lies predominantly along stretching directions of $\varepsilon\,\bfu$\footnote{
    Its absence in two dimensions is immediate:
    there $\curl\bfu$ is orthogonal to the plane in which $\varepsilon\,\bfu$ acts, so $\curl\bfu \cdot \varepsilon\,\bfu \cdot \curl\bfu$ vanishes identically, and the enstrophy can only dissipate.
    This distinguishes the behaviour of the two- and three-dimensional cases.
}.

\begin{figure}[pos=!ht]
    \centering

    \caption{
        Vortex stretching.
        \emph{Left}: a vortex tube carrying vorticity $\bfomega = \curl\bfu$ along its axis (blue) in a flow field $\bfu$ (grey) that is extensional along that axis and compressive transverse to it (red).
        \emph{Right}: the tube is elongated and narrowed (its original extent dashed).
        Conservation of angular momentum intensifies $\bfomega$, and the enstrophy $\calE$ grows.
        The rate of this production is $\bfomega \cdot \varepsilon\,\bfu \cdot \bfomega$, positive precisely when $\bfomega$ aligns with a positive eigendirection of $\varepsilon\,\bfu$.
        Compare with \citet[Fig.~1]{OpenAI_2026b}.
    }\label{fig:vortex_stretching}
\end{figure}

The advective production in \eqref{eq:enstrophy_stretching_continuous} may then itself be bounded in terms of $\calE$, yielding local-in-time existence of unique strong solutions;
moreover, for sufficiently small initial enstrophy and $\Re$, the advective production cannot overcome the dissipation, and global existence is guaranteed \citep{Robinson_2020}.
Away from these regimes, enstrophy production from vortex stretching is not known to be controlled in general.
Certain 3D geometries can instigate regions of vortex compression ($\curl\bfu \cdot \varepsilon\,\bfu \cdot \curl\bfu < 0$, see \Cref{sec:vortex_street_3d}) stabilising the flow, while recent work indicates that regions of vortex stretching ($\curl\bfu \cdot \varepsilon\,\bfu \cdot \curl\bfu > 0$) can create singular blow-up in finite time \citep{OpenAI_2026a}, in particular in the presence of forcing \citep{Cordoba_MartinezZoroa_2023,Cordoba_MartinezZoroa_Zheng_2026,Alpoge_Buckmaster_2026,OpenAI_2026b}.

\begin{remark*}[Topological interpretation of enstrophy generation]
    More generally, whenever $\curl \bfu \times \curl^2 \bfu = \nabla Q$ for some $Q$, the advective term in \eqref{eq:enstrophy} evaluates to zero (up to boundary conditions)\footnote{
        The same is true when $\bfu \times \curl \bfu = \nabla Q$ for some $Q$.
        However, in such settings the advective term in the momentum equation \eqref{eq:navier-stokes_1} has no contribution to the dynamics ($Q$ is simply the Bernoulli pressure), so the fact that it would not contribute to the evolution of enstrophy is trivial.
    }.
    The topology of such vorticity fields $\curl \bfu$ in 3D is characterised by Arnold's structure theorem \citep[Sec~II.1]{Arnold_Khesin_2008}:
    level sets $\Sigma$ of non-constant $Q$ must form tori, with $\curl \bfu$ and $\curl^2 \bfu$ tangent to $\Sigma$.
    This poses a strong restriction on the topological complexity of the vortex field lines;
    we might loosely say that for flows with less complex topologies, we expect the enstrophy production from advection to be smaller.
\end{remark*}


As a final note on the importance of the enstrophy $\calE$, by writing the dissipation of energy $\calK$ \eqref{eq:energy} in terms of the enstrophy $\partial_t \calK = - \tfrac{2}{\Re}\calE$, we see that the behaviours of the two quantities are closely linked.
A bounded enstrophy implies bounded dissipation in the energy;
blow-ups in the enstrophy imply blow-ups in the dissipation of energy.

\paragraph{Overview.}

In this work, we propose a mixed finite element discretisation of the Navier--Stokes equations \eqref{eq:navier-stokes_1} that preserves discrete versions of the evolution laws of energy \eqref{eq:energy} and enstrophy \eqref{eq:enstrophy_both}.
We refer to these properties as energy and enstrophy stability, respectively.
In 2D, this gives necessary dissipation of the enstrophy $\calE$.
In 3D, we preserve a discrete form of the vortex stretching identity \eqref{eq:enstrophy_stretching_continuous}.

Our intention is that by preserving the behaviour of enstrophy $\calE$, the arguments above may be transferred to the discrete level, including (i)~improved stability at large $\Re$, in particular in 2D where we have a $\Re$-robust bound on $\|\nabla \bfu\|$, (ii)~avoidance of anomalous energy dissipation, and (iii)~a better reproduction of known dynamics and analytic results in 3D.
The novel application of ideas from structure-preservation and finite element exterior calculus to the areas of turbulence and stabilisation represents the main takeaway of this work.

\subsection*{Related literature}

\paragraph{Enstrophy preservation.}

Preserving enstrophy on discretisation has been of pressing interest for decades.
One of the pioneering works in structure-preserving discretisations was the finite difference scheme of \citet{arakawa1966} that conserves both total kinetic energy and total enstrophy for an inviscid streamfunction--vorticity formulation in 2D. This scheme was later shown to be exactly equivalent to a finite element method on Cartesian grids \citep{jespersen1974} and generalised to arbitrary meshes by \citet{salmon1989}.

For thermodynamically isolated systems in 2D, the Euler equations can be viewed as a Hamiltonian system on the vorticity;
all moments of the vorticity $\omega$, including the enstrophy as the second moment, are Casimirs of this system and therefore conserved.
An alternative approach to enstrophy preservation beyond finite elements is then to discretise this Hamiltonian system in such a way as to conserve all Casimirs.
This is possible through a quantisation of the space of divergence-free fields through a discrete Lie algebra (that of skew-Hermitian matrices) preserving the Poisson bracket as a Lie bracket (the commutator);
the continuous Hamiltonian Euler system then reduces to a discrete Lax pair system.
Such a quantisation was done by \citet{Zeitlin_1991,Zeitlin_2004} over periodic domains (i.e.~the flat torus) and the sphere, respectively, in the latter case using a quantisation proposed by \citet{Hoppe_Yau_1998}.
As the truncation level is increased, the discrete Casimirs conserved by the matrix system (i.e.~the moments of the skew-Hermitian matrix) converge to the continuous Casimirs (i.e.~the moments of the vorticity).
To discretise this system in time while preserving all Casimirs (including the discrete enstrophy) \citet{Modin_Viviani_2020a,Modin_Viviani_2020b} and \citet{Cifani_Viviani_Modin_2023} have developed and implemented a class of isospectral symplectic Runge--Kutta methods.

For viscous flows, mixed stream function--vorticity discretisations for the incompressible Navier--Stokes equations were proposed by \citet{Liu_E_2001} in both 2D and 3D.
In the 2D case, this was recently studied by \citet{Lombardi_Pagliantini_2025}, interpreting the Navier--Stokes equations as a Hamiltonian system with dissipation on the vorticity $\omega$.
Using a discrete subcomplex of the de Rham complex, this scheme may be reparametrised in mixed vorticity--velocity--pressure form (\citealp{Dubois_Salaun_Salmon_2003b,Dubois_Salaun_Salmon_2003a,Palha_Gerristma_2017}; \citealp[Sec.~7.2]{Cotter_2023}; \citealp{Hanot_2023}).
We adopt the terminology of \citet{Palha_Gerristma_2017} and refer to the vorticity--velocity--pressure formulation of this scheme as the mass-, energy-, enstrophy-, vorticity-conserving (MEEVC) scheme \eqref{eq:meevc_semidiscrete}.

Both MEEVC and the scheme we propose \eqrefs{eq:semidiscrete_3d,eq:semidiscrete_2d} introduce an auxiliary vorticity variable $\bfomega$. The major distinction between them is in its definition.
Whereas the MEEVC scheme defines $\bfomega$ to be a projection of $\curl\bfu$ in the $L^2$ norm \eqref{eq:meevc_semidiscrete_a}, our definition is similar to (but not exactly\footnote{
    The discrepancy is caused by technicalities of the boundary conditions, and, in 3D, the vorticity only being divergence-free in a weak discrete sense.
    For the full details of this projection, see the scheme \eqrefs{eq:semidiscrete_3d,eq:semidiscrete_2d} presented below in \Cref{sec:scheme}.
}) a projection in the $H^1$ seminorm.
This different definition for the auxiliary vorticity is indicated by the general framework presented by a subset of the authors for the construction of structure-preserving integrators \citep{Andrews_Farrell_2025a,Andrews_Farrell_2025b}, and has substantial consequences for the sense in which enstrophy is preserved.
In 2D, the MEEVC scheme dissipates an auxiliary enstrophy $\tilde{\calE}(\omega) \coloneqq \frac{1}{2}\|\omega\|^2$ defined on the auxiliary vorticity $\omega$;
this differs from the enstrophy dissipation we show for our 2D integrator \eqref{eq:discrete_stability}, which holds for the \emph{primal} enstrophy $\calE(\bfu) \coloneqq \|\varepsilon\,\bfu\|^2$ defined on the primal velocity $\bfu$.
Numerical comparisons of our scheme with MEEVC in \Cref{sec:simulations} indicate that the two schemes differ substantially, with this stronger primal sense of enstrophy stability offering clear advantages in certain cases.

\citet{Palha_Gerristma_2017} proposed a dual-field velocity--vorticity discretisation for the 2D incompressible Navier--Stokes equations.
The authors also observe the stabilisation properties of their scheme on under-resolved meshes, using as a numerical demonstration the roll-up of a shear layer with $\Re = \infty$. One potential downside of this scheme is that the two approximations of the velocity, $\bfu_1$ and $\bfu_2$, satisfy independent evolution equations and can drift apart over long times.

An alternative approach to enstrophy stability in the incompressible Navier--Stokes equations comes in formulating the momentum equation in terms of the vorticity.
Here, one discretises
\begin{equation}\label{eq:vorticity}
    \partial_t \bfomega  =  \bfomega\cdot\nabla\bfu - \bfu\cdot\nabla\bfomega + \frac{1}{\Re}\Delta\bfomega,
\end{equation}
solving for the vorticity $\bfomega$ as the primal variable.
With the enstrophy defined as the squared norm of the vorticity $\tilde{\calE}(\bfomega) \coloneqq \frac{1}{2}\|\bfomega\|^2$, preservation of the evolution of enstrophy is possible without the need for auxiliary variables, assuming an appropriate handling of the nonlinear advective term \citep[Sec.~3.2]{Charnyi_et_al_2017}.
This, however, leaves two choices for how the velocity $\bfu$ is defined:
(i)~by specifying $\bfu$ in advance of solving the discretised vorticity equation \eqref{eq:vorticity}, e.g.~by precomputing a distinct discrete Navier--Stokes solution, in which case it is difficult to guarantee that $\curl\bfu$ and $\bfomega$ remain close over time, or (ii)~by coupling the vorticity equation \eqref{eq:vorticity} with an auxiliary equation solving for discrete $\bfu$ such that $\curl\bfu \approx \bfomega$, in which case the difficulty then lies not in preserving enstrophy stability, but in preserving energy stability.

When considering spectral methods on thermodynamically isolated systems, the spectral basis is typically constructed to be closed under differentiation, a property which does not hold for general finite element methods.
A consequence of this is that the evolution of many quadratic quantities of interest, including the energy, helicity, and in particular enstrophy, is preserved at the semidiscrete level without modification \citep{kreeft2013,palha2014,Palha_Gerristma_2017}.

\paragraph{Stabilisation.}

The typical approach in the literature for stabilisation of $\bfH^1$-conforming Navier--Stokes discretisations is not enstrophy preservation but the introduction of an artificial viscosity/dissipation.
Spectral vanishing viscosity methods, such as those proposed by \citet{tadmor1989} and \citet{Maday_Tadmor_1989}, allow the viscous term to act selectively on the high-frequency modes of the discrete solutions.
The motivation for this approach lies in how typical schemes accumulate energy at finer length scales.
With appropriate tuning, \citet{Tadmor_1990,Tadmor_1993} showed this to converge to the unique entropy solution for scalar conservation laws.
However, the selective damping of high-order modes is known to reduce the effective resolution of the discrete solution;
the approximation power of the highest-order polynomials is degraded.

Continuous interior penalty methods, proposed by \citet{Douglas_Dupont_1976}, address the fine-scale oscillations by penalising the jump in the solution gradient across facets.
These methods have been analysed for the advection--diffusion equation, Oseen's equation, and the Navier--Stokes equations at high $\Re$ \citep{Burman_Hansbo_2004,Burman_Fernandez_Hansbo_2006,Burman_Fernandez_2007}.
At moderate polynomial orders, e.g.~$p \approx 3$ (where there are few high-order terms to penalise), such penalty methods have been observed to outperform vanishing viscosity methods \citep{Moura_et_al_2022}.
However, as with spectral vanishing viscosity methods, the introduction of artificial dissipation renders these schemes irreversible in the Euler case $\Re = \infty$.

Discontinuous Galerkin methods \citep{Cockburn_Karniadakis_Shu_2000} offer intrinsic upwind stabilisation, which is biased towards these finer scales, as discussed by \citet{Moura_Sherwin_Peiro_2015,Moura_et_al_2017}.
The stability properties of discontinuous Galerkin methods are similar to those of continuous interior penalty methods.

\paragraph{Discrete Stokes complexes.}

Conforming discretisations of our proposed schemes \eqrefs{eq:semidiscrete_3d,eq:semidiscrete_2d} require knowledge and an implementation of finite element Stokes complexes, studied in the field of finite element exterior calculus \citep{Arnold_Falk_Winther_2006,Arnold_Falk_Winther_2009,Arnold_2018}.
In 2D, such finite element complexes are well known.
For example, over Alfeld-split meshes \citep{Alfeld_1984}, the Scott--Vogelius complex \citep{Scott_Vogelius_1985a, Scott_Vogelius_1985b} at degree 2 takes the preceding $H^2$-conforming space in the Stokes complex to be the degree-3 Hsieh--Clough--Tocher space \citep{Clough_Tocher_1965}.
This is relatively well supported in finite element software, e.g.~in \texttt{GetFEM} \citep{Renard_Poulios_2020}, \texttt{FreeFEM++} \citep{Hecht_2012}, \texttt{libMesh} \citep{Stogner_Carey_2007}, and \texttt{Firedrake} through \texttt{FIAT} \citep{Brubeck_Kirby_2026}.

Finite element Stokes complexes in 3D are less common.
The first finite element Stokes complexes on arbitrary tetrahedral meshes were proposed by \citet{Neilan_2015}, and further generalised by \citet{Chen_Huang_2024} to de Rham complexes of arbitrary smoothness.
Over split meshes, the minimal polynomial degree can again be reduced.
We refer the reader to the works of \citet{Fu_Guzman_Neilan_2020}, \citet{Hu_Zhang_Zhang_2022} and \citet{Brubeck_Liang_Parker_2026} for such complexes on Alfeld splits, and to \citet{Guzman_Lischke_Neilan_2022} for complexes on Worsey--Farin splits.
As far as we are aware, none of the above proposed $\bfH(\grad\curl)$-conforming spaces are yet supported in any publicly available general-purpose finite element software.

Non-conforming elements offer an alternative approach, particularly relevant on domains with re-entrant corners, which can cause blow-ups in the velocity gradient.
A commonly considered non-conforming 2D Stokes complex is constructed with the Morley element \citep{Morley_1968} and the vector-valued Crouzeix--Raviart element \citep{Crouzeix_Raviart_1973}, both relatively widely implemented, and concludes with the degree-$0$ DG space \citep{Gillette_Hu_Zhang_2020}.
However, this complex is inappropriate for our scheme \eqrefs{eq:semidiscrete_3d,eq:semidiscrete_2d} as the Crouzeix--Raviart space loses coercivity under the norm of the symmetric gradient;
there is no discrete Korn inequality in the vector-valued Crouzeix--Raviart space.
An alternative non-conforming complex that may be used for our discretisation is that constructed from the Mardal--Tai--Winther element \citep{Mardal_Tai_Winther_2002} for the velocity space \citep{Tai_Winther_2006}.
In the context of biharmonic-like equations, \citet{Ainsworth_Parker_2024a, Ainsworth_Parker_2024b} propose reparametrisation systems by which one may effectively use $H^2$-conforming scalar finite elements while only requiring finite elements with at most $H^1$ conformity to be implemented.
The schemes in \Cref{sec:charlie} below are closely related to and heavily inspired by their construction.

\subsection*{Overview}

In \Cref{sec:notation}, we introduce some general notation and preliminary material, including the continuous and discrete Stokes complexes employed throughout this manuscript.
In \Cref{sec:scheme}, we present our discretisation \eqrefs{eq:semidiscrete_3d,eq:semidiscrete_2d}, and show it to preserve discrete forms of the energy and enstrophy stability results \eqrefs{eq:energy,eq:enstrophy}.
In \Cref{sec:implementation}, we consider options for the implementation of the scheme that circumvent the need for an implemented discrete Stokes complex.
In \Cref{sec:bcs}, we discuss how different boundary conditions may be incorporated into the scheme while preserving the stability properties.
In \Cref{sec:simulations}, we test the scheme numerically on various benchmark problems, demonstrating the stabilising properties of our discretisation.
In \Cref{sec:conclusions}, we conclude with discussions of open questions and future work.

\section{Notation \& preliminaries}\label{sec:notation}

Let $\bfn$ denote the outward-pointing unit normal on the boundary $\partial\Omega$ of the domain $\Omega$, and $\bft$ an arbitrary unit tangential vector, i.e.~one satisfying $\bfn\cdot\bft = 0$.
For a general differential operator $\rmd$ over $\Omega$, we define Hilbert spaces $H(\rmd)$ and $H^k(\rmd)$,
\begin{equation}
    H(\rmd)    \coloneqq  \{u \in L^2 : \rmd u \in L^2\},  \qquad
    H^k(\rmd)  \coloneqq  \{u \in H^k : \rmd u \in H^k\}.
\end{equation}
We use boldface to emphasise vector-valued fields and spaces, e.g.~$\bfu \in \bfH^1 = [H^1]^n$.
For $k \in \bbN$, denote $H^k \coloneqq H(\grad^k)$.

\subsection{Stokes complexes in 3D \& 2D}

In 3D, two common notions exist for the Stokes complex\footnote{
    We note the space $\bfH^1(\curl)$ is distinct from $\bfH(\grad \curl)$, with $\bfH^1(\curl) \subsetneq \bfH(\grad \curl)$.
    This distinction is most clear in the finite element setting:
    normal continuity is necessary for $\bfH^1(\curl)$-conformity, but not for $\bfH(\grad \curl)$-conformity.
}:
\begin{equation}
\begin{tikzcd}
    H^2 \arrow{r}{\grad}
        & \bfH^1(\curl) \arrow{r}{\curl}
        & \bfH^1 \arrow{r}{\div}
        & L^2
\end{tikzcd},\qquad
\begin{tikzcd}
    H^1 \arrow{r}{\grad}
        & \bfH(\grad \curl) \arrow{r}{\curl}
        & \bfH^1 \arrow{r}{\div}
        & L^2
\end{tikzcd}.
\end{equation}
We consider the latter.
The construction of our scheme assumes the existence of a discrete Stokes complex,
\begin{subequations}\label{eq:stokes_3d}
\begin{equation}
\begin{tikzcd}
    H^1 \arrow{r}{\grad}
        & \bfH(\grad \curl) \arrow{r}{\curl}
        & \bfH^1 \arrow{r}{\div}
        & L^2  \\
    \bbA \arrow{r}{\grad} \arrow[u]
        & \bbW \arrow{r}{\curl} \arrow[u]
        & \bbU \arrow{r}{\div} \arrow[u]
        & \bbP \arrow[u]
\end{tikzcd},
\end{equation}
typically a finite element subcomplex.
The vertical arrows denote inclusion.
Alongside it, we consider the subcomplex obtained by imposing boundary conditions strongly,
\begin{equation}\label{eq:stokes_3d_bc}
\begin{tikzcd}
    H^1_0 \arrow{r}{\grad}
        & \bfH(\grad \curl) \cap \bfH_0(\curl) \arrow{r}{\curl}
        & \bfH^1 \cap \bfH_0(\div) \arrow{r}{\div}
        & L^2  \\
    \bbA_0 \arrow{r}{\grad} \arrow[u]
        & \bbW_0 \arrow{r}{\curl} \arrow[u]
        & \bbU_0 \arrow{r}{\div} \arrow[u]
        & \bbP \arrow[u]
\end{tikzcd},
\end{equation}
\end{subequations}
where we denote
\begin{subequations}
\begin{align}
    H^1_0  &\coloneqq  \{u \in H^1 : u = 0 \text{ on } \partial\Omega\},  \\
    \bfH_0(\curl)  &\coloneqq  \{\bfu \in \bfH(\curl) : \bfu \times \bfn = \bfzero \text{ on } \partial\Omega\},  \\
    \bfH_0(\div)  &\coloneqq  \{\bfu \in \bfH(\div) : \bfu \cdot \bfn = 0 \text{ on } \partial\Omega\}.
\end{align}
\end{subequations}

In 2D, let $^\perp$ denote a rotation of a vector $(u, v)^\perp \coloneqq (-v, u)$;
let $\nabla^\perp = \grad^\perp$ denote the rotated gradient $\nabla^\perp \phi \coloneqq (- \partial\phi/\partial y, \partial\phi/\partial x)$.
The 3D vector-to-vector operator $\bfu \mapsto \curl\bfu$ has two 2D analogues: (i)~scalar-to-vector $u \mapsto - \nabla^\perp u$, and (ii)~vector-to-scalar $\bfu \mapsto - \div\bfu^\perp$.
We assume the existence of a similar discrete Stokes complex,
\begin{subequations}\label{eq:stokes_2d}
\begin{equation}
\begin{tikzcd}
    H^2 \arrow{r}{\grad^\perp}
        & \bfH^1 \arrow{r}{\div}
        & L^2  \\
    \bbW \arrow{r}{\grad^\perp} \arrow[u]
        & \bbU \arrow{r}{\div} \arrow[u]
        & \bbP \arrow[u]
\end{tikzcd},
\end{equation}
with the corresponding complex with strongly imposed boundary conditions,
\begin{equation}\label{eq:stokes_2d_bc}
\begin{tikzcd}
    H^2 \cap H^1_0 \arrow{r}{\grad^\perp}
        & \bfH^1 \cap \bfH_0(\div) \arrow{r}{\div}
        & L^2  \\
    \bbW_0 \arrow{r}{\grad^\perp} \arrow[u]
        & \bbU_0 \arrow{r}{\div} \arrow[u]
        & \bbP \arrow[u]
\end{tikzcd}.
\end{equation}
\end{subequations}

Neither complex is exact on general domains.
We write $\frakH^k$ for the space of \emph{discrete} harmonic $k$-forms of the unconstrained \emph{discrete} complex \eqrefs{eq:stokes_3d,eq:stokes_2d}, i.e.~those functions in the kernel of the outgoing map that are $L^2$-orthogonal to the image of the incoming one.
For example, $\frakH^2 \subset \bbU$ denotes those divergence-free functions that are $\bfL^2$-orthogonal to $\curl\bbW$.
We write $\frakH^k_0$ for the corresponding harmonic forms in the strongly constrained discrete complex \eqrefs{eq:stokes_3d_bc,eq:stokes_2d_bc}.
Exactness at the $k$-th slot is precisely the statement that $\frakH^k$ or $\frakH^k_0$ is trivial.
We write $b_k$ for the Betti numbers of $\Omega$,
\begin{equation}\label{eq:betti}
    b_k  =  \dim \frakH^k  =  \dim \frakH^{n-k}_0.
\end{equation}
The equality here holds by Poincar\'e--Lefschetz duality ($\frakH^k \cong \frakH^{n-k}_0$) \citep[Thm.~3.43]{Hatcher_2002}:
strongly imposed boundary conditions {reverse the harmonic forms}, moving them from one end of the complex to the other (up to isomorphism).

\subsection{Finite element spaces \& discrete complexes}

Let the domain $\Omega$ be partitioned into a mesh $\calT^h$.
Let $\calF^h$ denote the set of interior facets of $\calT^h$.
We write $\int_{\calT^h} \coloneqq \sum_{K \in \calT^h} \int_K$ and $\int_{\calF^h} \coloneqq \sum_{F \in \calF^h} \int_F$ for integrals broken over the cells and interior facets, respectively.
For a general function $\phi$, continuous in cell interiors, we denote by $\leftavg \phi \rightavg \coloneqq \frac{1}{2}(\phi_+ + \phi_-)$ and $\leftjump \phi \odot \bfn \rightjump \coloneqq \phi_+ \odot \bfn_+ + \phi_- \odot \bfn_-$ the average and jump operators over the facets $\calF^h$, respectively, where $_+$ and $_-$ denote the traces of $\phi$ from the two elements sharing the facet, and $\odot$ is a general bilinear operator.
Let $h$ denote the average of the cell diameters of two cells sharing a facet.

We define here the various finite element spaces that will be used throughout this manuscript, for some degree $k \in \mathbb{N}$:
\begin{subequations}
\begin{align*}
    \text{Discontinuous Galerkin / Lagrange:}\quad  &\bbD\bbG_k,  \\
    \text{Second-type N\'ed\'elec \citep{Nedelec_1986}:}\quad  &\bbN\bbE\bbD^{(2)}_k  \quad(=  [\bbD\bbG_k]^n \cap \bfH(\curl)),  \\
    \text{Brezzi--Douglas--Marini \citep{Brezzi_Douglas_Marini_1985}:}\quad  &\bbB\bbD\bbM_k  \quad(=  [\bbD\bbG_k]^n \cap \bfH(\div)),  \\
    \text{Continuous Galerkin / Lagrange:}\quad  &\bbC\bbG_k  \quad(=  \bbD\bbG_k \cap H^1).
\end{align*}
\end{subequations}
For $k \ge 1$, these constitute finite element de Rham complexes in 3D and 2D:
\begin{equation}\label{eq:fedr_poly}
\begin{tikzcd}
    H^1 \arrow{r}{\grad}
        & \bfH(\curl) \arrow{r}{\curl}
        & \bfH(\div) \arrow{r}{\div}
        & L^2  
        & H^1 \arrow{r}{\grad^\perp}
        & \bfH(\div) \arrow{r}{\div}
        & L^2  \\
    \bbC\bbG_{k+2} \arrow{r}{\grad} \arrow[u]
        & \bbN\bbE\bbD^{(2)}_{k+1} \arrow{r}{\curl} \arrow[u]
        & \bbB\bbD\bbM_k \arrow{r}{\div} \arrow[u]
        & \bbD\bbG_{k-1} \arrow[u]
        & \bbC\bbG_{k+1} \arrow{r}{\grad^\perp} \arrow[u]
        & \bbB\bbD\bbM_k \arrow{r}{\div} \arrow[u]
        & \bbD\bbG_{k-1} \arrow[u]
\end{tikzcd}.
\end{equation}
This 2D complex is used for the simulation in \Cref{sec:vortex_street_2d}.
The vector-valued spaces may be trimmed to give
\begin{subequations}
\begin{align*}
    \text{First-type N\'ed\'elec \citep{Nedelec_1986}:}\quad  &\bbN\bbE\bbD^{(1)}_k \quad(\subsetneq \bbN\bbE\bbD^{(2)}_k),  \\
    \text{Raviart--Thomas \citep{raviart1977}:}\quad  &\bbR\bbT_k \quad(\subsetneq \bbB\bbD\bbM_k),
\end{align*}
\end{subequations}
forming complexes for $k \ge 0$:
\begin{equation}\label{eq:fedr_trimmed}
\begin{tikzcd}
    H^1 \arrow{r}{\grad}
        & \bfH(\curl) \arrow{r}{\curl}
        & \bfH(\div) \arrow{r}{\div}
        & L^2  
        & H^1 \arrow{r}{\grad^\perp}
        & \bfH(\div) \arrow{r}{\div}
        & L^2  \\
    \bbC\bbG_{k+1} \arrow{r}{\grad} \arrow[u]
        & \bbN\bbE\bbD^{(1)}_{k+1} \arrow{r}{\curl} \arrow[u]
        & \bbR\bbT_{k+1} \arrow{r}{\div} \arrow[u]
        & \bbD\bbG_k \arrow[u]
        & \bbC\bbG_{k+1} \arrow{r}{\grad^\perp} \arrow[u]
        & \bbR\bbT_{k+1} \arrow{r}{\div} \arrow[u]
        & \bbD\bbG_k \arrow[u]
\end{tikzcd}.
\end{equation}
In \Cref{sec:penalty} below we propose a penalty method to avoid the need to implement a Stokes complex. In this case the velocity $\bfu$ is chosen to lie in the divergence-free subset of the $\bfH(\div)$-conforming space.
Divergence-free elements of $\bbB\bbD\bbM_k$ coincide precisely with divergence-free elements of $\bbR\bbT_{k+1}$;
consequently, the following forms a complex in 3D, combining the above \eqrefs{eq:fedr_poly,eq:fedr_trimmed} \citep[Sec.~7.5]{Arnold_2018}:
\begin{equation}\label{eq:fedr_hybrid}
\begin{tikzcd}
    H^1 \arrow{r}{\grad}
        & \bfH(\curl) \arrow{r}{\curl}
        & \bfH(\div) \arrow{r}{\div}
        & L^2  \\
    \bbC\bbG_{k+1} \arrow{r}{\grad} \arrow[u]
        & \bbN\bbE\bbD^{(1)}_{k+1} \arrow{r}{\curl} \arrow[u]
        & \bbB\bbD\bbM_{k} \arrow{r}{\div} \arrow[u]
        & \bbD\bbG_{k-1} \arrow[u]
\end{tikzcd}.
\end{equation}
The first two spaces, and the divergence-free subspace of the third, identify exactly with those of \eqref{eq:fedr_trimmed}.
3D discretisations in this complex therefore result in the same discrete velocity $\bfu$ as those on the trimmed polynomial complex \eqref{eq:fedr_trimmed}, but using fewer degrees of freedom.
We use this hybrid finite element de Rham complex for the simulations in \Cref{sec:vortex_street_3d}.

We use the superscript $^\rmA$ to denote macro-elements defined over Alfeld splits \citep{Alfeld_1984}, e.g.~$\bbC\bbG_k^\rmA$.
In 3D, the Scott--Vogelius pair \citep{Scott_Vogelius_1985a, Scott_Vogelius_1985b} discretises the tail of the Stokes complex \eqref{eq:stokes_3d} on Alfeld-split meshes for $k \ge 3$:
\begin{equation}\label{eq:sv_3d}
\begin{tikzcd}
    \cdots \arrow{r}{\curl}
        & \bfH^1 \arrow{r}{\div}
        & L^2  \\
    \cdots \arrow{r}{\curl}
        & \!\,[\bbC\bbG_k^\rmA]^3 \arrow{r}{\div} \arrow[u]
        & \bbD\bbG_{k-1}^\rmA \arrow[u]
\end{tikzcd}.
\end{equation}
We use this (alongside the polynomial de Rham complex above \eqref{eq:fedr_poly}) for the simulations in \Cref{sec:hill_vortex}.
In 2D, we also introduce the (degree-3) Hsieh--Clough--Tocher space, a macro-element over such split meshes:
\begin{equation*}
    \text{Hsieh--Clough--Tocher \citep{Clough_Tocher_1965}:}\quad  \bbH\bbC\bbT_3  \coloneqq  \bbD\bbG_3^\rmA \cap H^2.
\end{equation*}
This gives the following finite element Stokes complex \eqref{eq:stokes_2d}:
\begin{equation}\label{eq:sv_2d}
\begin{tikzcd}
    H^2 \arrow{r}{\grad^\perp}
        & \bfH^1 \arrow{r}{\div}
        & L^2  \\
    \bbH\bbC\bbT_3 \arrow{r}{\grad^\perp} \arrow[u]
        & \!\,[\bbC\bbG_2^\rmA]^2 \arrow{r}{\div} \arrow[u]
        & \bbD\bbG_1^\rmA \arrow[u]
\end{tikzcd}.
\end{equation}
We use this complex for the shear flow simulation in \Cref{sec:kh}.

\section{Energy- \& enstrophy-stable discretisation}\label{sec:scheme}

To introduce the scheme, we restrict our attention for now to the simplest case:
convex polytopal domains with free slip boundary conditions on $\partial\Omega$,
\begin{subequations}\label{eq:slip}
\begin{equation}\label{eq:slip_1}
    \bfu \cdot \bfn = 0,  \qquad
    \bft \cdot \varepsilon\,\bfu \cdot \bfn = 0.
\end{equation}
We will assume for now that there are no periodic boundaries, making the domain necessarily contractible by convexity.
Over such domains, these boundary conditions \eqref{eq:slip_1} may be equivalently formulated as
\begin{equation}\label{eq:slip_2}
    \bfu \cdot \bfn = 0,  \qquad
    \curl\bfu \times \bfn = \bfzero.
\end{equation}
\end{subequations}

\begin{remark}[General boundary conditions]
    A brief discussion of periodic boundary conditions can be found at the end of this section.
    More general boundary conditions, and non-convex and curved domains, are treated in \Cref{sec:bcs} below, with a further discussion of the equivalence between \eqref{eq:slip_1} and \eqref{eq:slip_2} in \Cref{sec:strong_vorticity}.
\end{remark}

To construct our scheme, we begin with the following 3-field (vorticity--velocity--pressure) formulation of the incompressible Navier--Stokes equations in strong form \eqref{eq:navier-stokes_1}:
\begin{subequations}\label{eq:navier-stokes_2}
\begin{align}
    \bfomega  &=  \curl \bfu,  \\
    \partial_t \bfu  &=  \bfu \times \bfomega - \nabla \left[ \, \frac{1}{2}|\bfu|^2 + p \, \right] + \frac{2}{\Re} \div \varepsilon\,\bfu,  \\
    0  &=  \div \bfu.
\end{align}
\end{subequations}
We consider the unforced system for clarity of exposition;
introducing a forcing term into the discretisation proposed below follows in the natural way\footnote{
    A body force $\bff$ enters the momentum equation as $(\bff, \bfv)$.
    Through the test functions used in the proof of \Cref{th:stability}, it contributes the terms $(\bff, \bfu)$ and $(\bff, \curl\bfomega)$ to the right-hand sides of the discrete energy and enstrophy laws \eqref{eq:discrete_stability} (likewise in \eqref{eq:enstrophy_stretching} and \eqref{eq:discrete_stability_bcs}).
}.
The tangential stress condition $\curl\bfu \times \bfn = \bfomega \times \bfn = \bfzero$ may be imposed strongly on $\bfomega$.
Rather than defining $\bfomega$ explicitly, we choose to define it implicitly through an auxiliary Stokes problem:
\begin{subequations}\label{eq:navier-stokes_3}
\begin{align}
    0  &=  \div \bfomega,  \label{eq:navier-stokes_3a}  \\
    \curl^2 \bfomega  &=  \curl^3 \bfu - \nabla \alpha,  \label{eq:navier-stokes_3b}  \\
    \partial_t \bfu  &=  \bfu \times \bfomega - \nabla \left[ \, \frac{1}{2}|\bfu|^2 + p \, \right] + \frac{2}{\Re} \div \varepsilon\,\bfu,  \label{eq:navier-stokes_3c}  \\
    0  &=  \div \bfu.  \label{eq:navier-stokes_3d}
\end{align}
\end{subequations}
Just as the pressure $p$ can be interpreted as a Lagrange multiplier enforcing the divergence-free constraint on $\bfu$, the auxiliary scalar field $\alpha$ can be interpreted as a Lagrange multiplier enforcing the divergence-free constraint on $\bfomega$.
Imposing the boundary condition $\alpha = 0$ ensures that the auxiliary Stokes problem \eqrefs{eq:navier-stokes_3a,eq:navier-stokes_3b} is well posed and has the unique strong-form solution\footnote{
    We show in the proof of \Cref{th:stability} that this boundary condition implies $\alpha = 0$ uniformly over $\Omega$.
    Nevertheless, it is required to ensure well-posedness of the weak formulation.
} $\bfomega = \curl\bfu$ for a given $\bfu$.
Lastly, we note
\begin{equation}
    \curl^3 \bfu
        = \curl \, [\curl^2 \bfu - \nabla \div \bfu]
        = \curl \, [- \Delta \bfu]
        = \curl \, [\nabla \div \bfu - 2 \div \varepsilon \, \bfu]
        = - \, 2 \curl \div \varepsilon \, \bfu
\end{equation}
where the first and final equalities hold by the complex property $\curl \grad = 0$, and the central two equalities hold as alternate formulations of the Laplacian.
We substitute $- \, 2 \curl \div \varepsilon \, \bfu$ for $\curl^3 \bfu$ in \eqref{eq:navier-stokes_3b} accordingly:
\begin{subequations}\label{eq:navier-stokes_4}
\begin{align}
    0  &=  \div \bfomega,  \label{eq:navier-stokes_4a}  \\
    \curl^2 \bfomega  &=  - \, 2 \curl \div \varepsilon \, \bfu - \nabla \alpha,  \label{eq:navier-stokes_4b}  \\
    \partial_t \bfu  &=  \bfu \times \bfomega - \nabla \left[ \, \frac{1}{2}|\bfu|^2 + p \, \right] + \frac{2}{\Re} \div \varepsilon\,\bfu,  \label{eq:navier-stokes_4c}  \\
    0  &=  \div \bfu.  \label{eq:navier-stokes_4d}
\end{align}
\end{subequations}

To derive our variational/weak form of \eqref{eq:navier-stokes_4}, and consequently our finite element semidiscretisation, we test each equation in $L^2$ against test functions in the spaces corresponding to $\alpha$, $\bfomega$, $\bfu$ and $p$, respectively.
Integrating by parts, each equation (except \eqref{eq:navier-stokes_4d}) picks up a boundary term:
\begin{subequations}\label{eq:boundary_terms}
\begin{align}
    0  &=  (\bfomega, \nabla \beta) - \langle \bfomega \cdot \bfn, \beta \rangle,  \label{eq:boundary_terms_a}  \\
    (\curl \bfomega, \curl \bfchi)  &=  \Big[ \, 2 (\varepsilon \, \bfu, \varepsilon \curl \bfchi) - (\nabla \alpha, \bfchi) \, \Big] - 2 \langle \varepsilon \, \bfu \cdot \bfn, \curl \bfchi \rangle,  \label{eq:boundary_terms_b}  \\
    (\partial_t \bfu, \bfv)  &=  \Big[ \, (\bfu \times \bfomega, \bfv) + \left( \frac{1}{2}|\bfu|^2 + p, \div \bfv \right) - \frac{2}{\Re}(\varepsilon\,\bfu, \varepsilon\,\bfv) \, \Big]  \notag  \\
    &\qquad\qquad\qquad\qquad\qquad\qquad\qquad\qquad + \Big[ - \left\langle \frac{1}{2}|\bfu|^2, \bfv \cdot \bfn \right\rangle + \left\langle \frac{2}{\Re} \varepsilon\,\bfu \cdot \bfn - p \bfn, \bfv \right\rangle \Big],  \label{eq:boundary_terms_c}  \\
    0  &=  (\div \bfu, q),  \label{eq:boundary_terms_d}
\end{align}
\end{subequations}
where $\langle \cdot, \cdot \rangle$ denotes the $L^2$ duality pairing on $\partial\Omega$.
In our proposed discretisation, each of the boundary terms vanishes, either by strong enforcement of the boundary condition on the test function, or via the free slip conditions \eqref{eq:slip} under consideration.
We thereby propose the following semidiscretisation.

\begin{definition}[3D semidiscretisation]
    Find $(\alpha, \bfomega, \bfu, p) \in \bbA_0 \times \bbW_0 \times \bbU_0 \times \bbP$ such that
    \begin{subequations}\label{eq:semidiscrete_3d}
    \begin{align}
        0  &=  (\bfomega, \nabla \beta),  \label{eq:semidiscrete_3d_a} \\
        (\curl \bfomega, \curl \bfchi)  &=  2 (\varepsilon \, \bfu, \varepsilon \curl \bfchi) - (\nabla \alpha, \bfchi),  \label{eq:semidiscrete_3d_b} \\
        (\partial_t \bfu, \bfv)  &=  (\bfu \times \bfomega, \bfv) + \left(\frac{1}{2}|\bfu|^2 + p, \div \bfv\right) - \frac{2}{\Re}(\varepsilon\,\bfu, \varepsilon\,\bfv),  \label{eq:semidiscrete_3d_c} \\
        0  &=  (\div \bfu, q),  \label{eq:semidiscrete_3d_d}
    \end{align}
    \end{subequations}
    for all $(\beta, \bfchi, \bfv, q) \in \bbA_0 \times \bbW_0 \times \bbU_0 \times \bbP$.
\end{definition}

We require the spaces $\bbA_0$, $\bbW_0$, $\bbU_0$, $\bbP$ to form a discrete Stokes complex \eqref{eq:stokes_3d_bc}.
This is a sufficient condition for the energy and enstrophy stability properties below \eqref{eq:discrete_stability} to hold.
In 2D the discretisation is largely similar, the two key differences being that (i)~the vorticity $\omega$ is a scalar, and (ii)~the Lagrange multiplier variable $\alpha$ is not required.

\begin{definition}[2D semidiscretisation]
    Find $(\omega, \bfu, p) \in \bbW_0 \times \bbU_0 \times \bbP$ such that
    \begin{subequations}\label{eq:semidiscrete_2d}
    \begin{align}
        (\nabla \omega, \nabla \chi)  &=  - \, 2 (\varepsilon \, \bfu, \varepsilon \, \nabla^\perp \chi),  \label{eq:semidiscrete_2d_a} \\
        (\partial_t \bfu, \bfv)  &=  - \, (\omega \bfu^\perp, \bfv) + \left(\frac{1}{2}|\bfu|^2 + p, \div \bfv\right) - \frac{2}{\Re}(\varepsilon\,\bfu, \varepsilon\,\bfv),  \label{eq:semidiscrete_2d_b} \\
        0  &=  (\div \bfu, q),
    \end{align}
    \end{subequations}
    for all $(\chi, \bfv, q) \in \bbW_0 \times \bbU_0 \times \bbP$.
\end{definition}

Again, the spaces $\bbW_0$, $\bbU_0$, $\bbP$ must form a discrete Stokes complex \eqref{eq:stokes_2d_bc}.

\subsection{Energy \& enstrophy stability}

\begin{theorem}[Energy \& enstrophy stability]\label{th:stability}
    The Navier--Stokes semidiscretisations in 3D \eqref{eq:semidiscrete_3d} and 2D \eqref{eq:semidiscrete_2d} preserve discrete forms of the energy dissipation \eqref{eq:energy} and enstrophy evolution \eqref{eq:enstrophy} laws:
    \begin{subequations}\label{eq:discrete_stability}
    \begin{align}
        \calK(\bfu)  &\coloneqq  \frac{1}{2}\|\bfu\|^2,  &
        \partial_t \calK  &=  - \, \frac{2}{\Re} \calE(\bfu)  \le  0,  \label{eq:energy_discrete}  \\
        \calE(\bfu)  &\coloneqq  \|\varepsilon \, \bfu\|^2,  &
        \partial_t \calE  &=  \int_\Omega \bfu \cdot (\bfomega \times \curl\bfomega) - \frac{1}{\Re} \|\curl\bfomega\|^2.  \label{eq:enstrophy_discrete}
    \end{align}
    \end{subequations}
    In 2D, the advective term $\int_\Omega \bfu \cdot (\bfomega \times \curl\bfomega)$ in the discrete enstrophy law \eqref{eq:enstrophy_discrete} vanishes, implying enstrophy $\calE$ is dissipated.
    In 3D, this can be identified with the classical vortex stretching form \eqref{eq:enstrophy_stretching_continuous} (see \Cref{th:stretching}).
\end{theorem}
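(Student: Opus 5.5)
Both laws will come from testing the momentum equation with a suitable discrete test function and then using the complex structure to dispose of the pressure, Bernoulli and Lagrange-multiplier terms.

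\emph{Energy law.} Test \eqref{eq:semidiscrete_3d_c} with $\bfv = \bfu \in \bbU_0$. Then $(\bfu\times\bfomega,\bfu) = 0$ pointwise. Since \eqref{eq:semidiscrete_3d_d} holds for all $q\in\bbP$ and $\div\bbU_0\subseteq\bbP$, we get $\div\bfu = 0$ exactly. This kills $(\tfrac12|\bfu|^2+p,\div\bfu)$ and leaves $\partial_t\calK = -\tfrac{2}{\Re}\|\varepsilon\,\bfu\|^2$. The 2D case is identical, using $\bfu^\perp\cdot\bfu = 0$.

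\emph{Enstrophy law (3D).} First show $\alpha = 0$. Take $\bfchi = \nabla\alpha\in\bbW_0$ in \eqref{eq:semidiscrete_3d_b}, which is allowed by the subcomplex property $\grad\bbA_0\subseteq\bbW_0$. Since $\curl\nabla\alpha = 0$, this gives $\|\nabla\alpha\|^2 = 0$, and $\alpha\in\bbA_0$ then forces $\alpha=0$. Next, $\partial_t\calE = 2(\varepsilon\,\bfu,\varepsilon\,\partial_t\bfu)$.

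\emph{Key step.} We want to rewrite $2(\varepsilon\,\bfu,\varepsilon\,\partial_t\bfu)$ via \eqref{eq:semidiscrete_3d_b}, which requires $\partial_t\bfu = \curl\bfchi$ for some $\bfchi\in\bbW_0$. The field $\partial_t\bfu\in\bbU_0$ is divergence-free. Under the standing assumption of a contractible domain, $\frakH^2_0 \cong \frakH^1$ is trivial, because $b_1 = 0$ by \eqref{eq:betti}. So exactness of \eqref{eq:stokes_3d_bc} at $\bbU_0$ gives $\partial_t\bfu = \curl\bfsigma$ for some $\bfsigma\in\bbW_0$. With $\alpha=0$ this yields
\begin{equation*}
\partial_t\calE = 2(\varepsilon\,\bfu,\varepsilon\curl\bfsigma) = (\curl\bfomega,\curl\bfsigma) = (\curl\bfomega,\partial_t\bfu).
\end{equation*}
Now take $\bfv = \curl\bfomega\in\bbU_0$ in \eqref{eq:semidiscrete_3d_c}. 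The pressure/Bernoulli term vanishes since $\div\curl = 0$. Using \eqref{eq:semidiscrete_3d_b} once more with $\bfchi=\bfomega$, we get $\tfrac{2}{\Re}(\varepsilon\,\bfu,\varepsilon\curl\bfomega) = \tfrac1\Re\|\curl\bfomega\|^2$. Finally $(\bfu\times\bfomega,\curl\bfomega) = \int_\Omega\bfu\cdot(\bfomega\times\curl\bfomega)$ by the scalar triple product, which gives \eqref{eq:enstrophy_discrete}.

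\emph{2D case.} The argument is the same with no $\alpha$. Exactness of \eqref{eq:stokes_2d_bc} at $\bbU_0$, i.e.\ $b_0$-duality giving a trivial $\frakH^1_0$ on a simply connected domain, gives $\partial_t\bfu = \nabla^\perp\sigma$ with $\sigma\in\bbW_0$. Then \eqref{eq:semidiscrete_2d_a} with $\chi = \sigma$ gives
\begin{equation*}
\partial_t\calE = -(\nabla\omega,\nabla\sigma) = -(\nabla^\perp\omega,\partial_t\bfu),
\end{equation*}
since $\nabla\omega\cdot\nabla\sigma = \nabla^\perp\omega\cdot\nabla^\perp\sigma$. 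Testing \eqref{eq:semidiscrete_2d_b} with $\bfv = -\nabla^\perp\omega\in\bbU_0$, and using \eqref{eq:semidiscrete_2d_a} with $\chi=\omega$, gives viscous term $-\tfrac1\Re\|\nabla\omega\|^2$. The advective term is $(\omega\bfu^\perp,\nabla^\perp\omega) = \int_\Omega \omega\,\bfu\cdot\nabla\omega = \tfrac12\int_\Omega\bfu\cdot\nabla(\omega^2)$. This is $-\tfrac12(\div\bfu,\omega^2)$ plus a boundary term. The boundary term vanishes because $\bfu\cdot\bfn=0$, and the volume term vanishes because $\div\bfu=0$ exactly. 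Hence the advective term is zero, which is the 2D form of $\bfu\cdot(\bfomega\times\curl\bfomega)$.

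\emph{Main obstacle.} The delicate point is the key step: the whole argument hinges on $\partial_t\bfu$ lying in the range of $\curl$ (resp.\ $\nabla^\perp$) on $\bbW_0$. This needs exactness of the constrained discrete complex, which I justify via the harmonic-form count \eqref{eq:betti} on contractible domains. I expect a correction term from $\frakH^2_0$ when this fails, deferred to \Cref{sec:bcs}. The identity $\alpha=0$ also needs care, because $\grad\bbA_0\subseteq\bbW_0$ must hold.
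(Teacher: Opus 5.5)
Your proof is correct and follows essentially the same route as the paper: you obtain $\div\bfu=0$ and $\alpha=0$ from the complex property, then test the momentum equation with $\curl\bfomega$ and the vorticity equation with $\bfomega$ and with a curl-preimage of $\partial_t\bfu$. The paper instead writes $\bfu=\curl\bfpsi$ and tests with $\partial_t\bfpsi$, which amounts to the same thing. Your explicit 2D cancellation of the advective term is a useful addition that the paper leaves implicit, though one label is off: the relevant 2D Betti number is $b_1=\dim\frakH^1_0$, not $b_0$.
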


\begin{proof}
    We restrict our attention to the 3D case \eqref{eq:semidiscrete_3d}, as the proofs in the 2D case are largely identical.
    For energy, consider $q = \div\bfu$ in \eqref{eq:semidiscrete_3d_d}, made possible by the complex property \eqref{eq:stokes_3d_bc}:
    \begin{equation}\label{eq:stability_proof_1}
        0  =  \|\!\div\bfu\|^2.
    \end{equation}
    Thus $\div\bfu = 0$ pointwise.
    Considering $\bfv = \bfu$ in \eqref{eq:semidiscrete_3d_c}, we use the divergence-free condition on $\bfu$ to observe the discrete energy stability result \eqref{eq:energy_discrete}:
    \begin{equation}
        \partial_t \calK
            =  (\partial_t \bfu, \bfu)
            =  - \, \frac{2}{\Re} \| \varepsilon\,\bfu \|^2
            =  - \, \frac{2}{\Re} \calE(\bfu).
    \end{equation}
    For enstrophy, consider $\bfv = \curl\bfomega$ in \eqref{eq:semidiscrete_3d_c}, similarly possible by the complex property \eqref{eq:stokes_3d_bc}:
    \begin{equation}\label{eq:stability_proof_2}
        (\partial_t \bfu, \curl\bfomega)
            = \int_\Omega \bfu \cdot (\bfomega \times \curl\bfomega) - \frac{2}{\Re} (\varepsilon\,\bfu, \varepsilon\curl\bfomega).
    \end{equation}
    Considering $\bfchi = \nabla\alpha$ in \eqref{eq:semidiscrete_3d_b}, again by the complex property \eqref{eq:stokes_3d_bc}, we observe
    \begin{equation}
        0  =  - \, \|\nabla\alpha\|^2,
    \end{equation}
    implying $\nabla\alpha = \bfzero$ pointwise, and consequently, by the zero boundary condition, $\alpha = 0$\footnote{
        Despite $\alpha$ necessarily being zero on the discrete level, $\alpha$ and $\beta$ cannot be removed from the discretisation \eqref{eq:semidiscrete_3d} without damaging its well-posedness through the introduction of a large nullspace into the auxiliary problem defining $\bfomega$ \eqref{eq:semidiscrete_3d_b}.
    }.
    Lastly, since (i)~$\bfu \in \bbU_0$ lies in the kernel of $\div$ \eqref{eq:stability_proof_1}, and (ii)~the domain $\Omega$ is assumed to be contractible, there exists a streamfunction $\bfpsi \in \bbW_0$ such that $\curl\bfpsi = \bfu$.
    Considering $\bfchi = \partial_t \bfpsi$ and $\bfchi = \bfomega$, respectively, in \eqref{eq:semidiscrete_3d_b}, we observe
    \begin{equation}\label{eq:stability_proof_3}
        (\curl\bfomega, \partial_t \bfu)  =  2 (\varepsilon \, \bfu, \varepsilon \, \partial_t \bfu)  =  \partial_t \calE,
        \qquad
        \| \curl\bfomega \|^2  =  2 (\varepsilon \, \bfu, \varepsilon \curl\bfomega),
    \end{equation}
    where in each equality we use the result $\alpha = 0$.
    Discrete enstrophy stability \eqref{eq:enstrophy_discrete} holds from \eqrefs{eq:stability_proof_2,eq:stability_proof_3}.
\end{proof}

These energy and enstrophy stability results \eqref{eq:discrete_stability} hold for semidiscretisations in space only;
their fully discrete forms after discretisation in time depend on the timestepping scheme used.
As these are quadratic quantities, however, any B-stable Runge--Kutta method, e.g.~Gau\ss{} (including implicit midpoint) and Radau-IIA (including implicit Euler) methods, will preserve the dissipation of energy $\calK$ and, in 2D, enstrophy $\calE$ \cite[Sec.~IV.12]{Hairer_Wanner_1996}.
Moreover in the inviscid limit $\Re \to \infty$, Gau\ss{} methods will preserve the conservation of energy and, in 2D, enstrophy while preserving only the advective contribution to enstrophy in 3D, due to their symplecticity.

\begin{remark}[Equivalent forms of the dissipation]\label{rem:dissipation_forms}
    In 3D, exactness of the discrete Stokes complex \eqref{eq:stokes_3d_bc} implies that for any divergence-free test function $\bfv \in \bbU_0$ there exists $\bfchi \in \bbW_0$ with $\curl\bfchi = \bfv$.
    Since $\nabla\alpha = \bfzero$ (see the proof of \Cref{th:stability}), \eqref{eq:semidiscrete_3d_b} then gives, for all divergence-free $\bfv \in \bbU_0$,
    \begin{equation}
        (\curl \bfomega, \bfv)  =  2 (\varepsilon \, \bfu, \varepsilon \, \bfv).
    \end{equation}
    The dissipation $\frac{2}{\Re}(\varepsilon\,\bfu, \varepsilon\,\bfv)$ in \eqref{eq:semidiscrete_3d_c} may therefore be equivalently replaced with $\frac{1}{\Re}(\curl\bfomega, \bfv)$, and similarly in 2D.
    We adopt the symmetric-gradient form $\frac{2}{\Re}(\varepsilon\,\bfu, \varepsilon\,\bfv)$ throughout, as we have observed advantages in the robustness of the Newton solver in numerical experiments.
\end{remark}

\subsection{Vortex stretching}\label{sec:stretching}

We show here how, in 3D, the discrete enstrophy evolution equation \eqref{eq:enstrophy_discrete} may be manipulated into the following discrete analogue of the vortex stretching form \eqref{eq:enstrophy_stretching_continuous}.

\begin{theorem}[Enstrophy stability: vortex stretching form]\label{th:stretching}
    In 3D, our proposed semidiscretisation \eqref{eq:semidiscrete_3d} satisfies
    \begin{equation}\label{eq:enstrophy_stretching}
        \partial_t \calE
            =  \left(\int_\Omega \bfomega \cdot \varepsilon\,\bfu \cdot \bfomega
            +  \frac{1}{2}\int_{\calF^h} \leftjump |\bfomega|^2 \, \bfn \rightjump \cdot \bfu
            -  \int_{\calT^h} \bfomega \cdot \nabla\big[\bfu\cdot\bfomega\big]\right)
            -  \frac{1}{\Re}\|\curl\bfomega\|^2.
    \end{equation}
\end{theorem}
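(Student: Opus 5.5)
The plan is to start from the discrete enstrophy law \eqref{eq:enstrophy_discrete} of \Cref{th:stability}. That result already gives
\[
\partial_t\calE = \int_\Omega \bfu\cdot(\bfomega\times\curl\bfomega) - \tfrac{1}{\Re}\|\curl\bfomega\|^2 .
\]
So the statement reduces to a purely kinematic identity for the advective term. We need to show that $\int_\Omega \bfu\cdot(\bfomega\times\curl\bfomega)$ equals the bracketed expression in \eqref{eq:enstrophy_stretching}. Since $\bfomega\in\bbW_0$ is only piecewise polynomial (and $\bfH(\grad\curl)$-conforming, with no normal continuity), every manipulation involving derivatives of $\bfomega$ beyond $\curl$ will be done cell by cell. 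That is the source of the broken integrals $\int_{\calT^h}$ and the facet term.

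The first step is a pointwise identity on each cell $K$, where all fields are smooth. Using $\bfomega\times\curl\bfomega = \tfrac12\nabla|\bfomega|^2 - (\bfomega\cdot\nabla)\bfomega$, we write
\[
\bfu\cdot(\bfomega\times\curl\bfomega) = \tfrac12\,\bfu\cdot\nabla|\bfomega|^2 - \bfu\cdot(\bfomega\cdot\nabla)\bfomega .
\]
Next, expand $\bfomega\cdot\nabla[\bfu\cdot\bfomega] = \omega_j\omega_i\partial_j u_i + u_i\omega_j\partial_j\omega_i$. The first term is $\bfomega\cdot\nabla\bfu\cdot\bfomega = \bfomega\cdot\varepsilon\,\bfu\cdot\bfomega$, because only the symmetric part of $\nabla\bfu$ survives the quadratic form. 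The second term is $\bfu\cdot(\bfomega\cdot\nabla)\bfomega$. Hence, on each cell,
\[
-\bfu\cdot(\bfomega\cdot\nabla)\bfomega = \bfomega\cdot\varepsilon\,\bfu\cdot\bfomega - \bfomega\cdot\nabla[\bfu\cdot\bfomega].
\]
Summing over cells gives the first and third terms of \eqref{eq:enstrophy_stretching}. The term $\int_\Omega\bfomega\cdot\varepsilon\,\bfu\cdot\bfomega$ needs no breaking, since $\bfu\in\bfH^1$.

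The second step treats $\tfrac12\int_{\calT^h}\bfu\cdot\nabla|\bfomega|^2$ by integrating by parts on each cell:
\[
\tfrac12\int_{\calT^h}\bfu\cdot\nabla|\bfomega|^2 = -\tfrac12\int_{\calT^h}(\div\bfu)\,|\bfomega|^2 + \tfrac12\sum_{K}\int_{\partial K}|\bfomega|^2\,\bfu\cdot\bfn_K .
\]
The volume term vanishes because $\div\bfu=0$ pointwise, by \eqref{eq:stability_proof_1}. On boundary facets, $\bfu\cdot\bfn=0$ since $\bfu\in\bbU_0$. On interior facets, $\bfu$ is continuous (being $\bfH^1$-conforming), so the two cell contributions combine into $\tfrac12\int_{\calF^h}\leftjump|\bfomega|^2\bfn\rightjump\cdot\bfu$. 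Adding the two steps and substituting into \eqref{eq:enstrophy_discrete} gives \eqref{eq:enstrophy_stretching}.

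The main obstacle is bookkeeping rather than analysis. One must check the sign and index conventions in the identity for $\bfomega\cdot\nabla[\bfu\cdot\bfomega]$, confirm that the antisymmetric part of $\nabla\bfu$ genuinely drops out, and make sure the facet term uses only the normal continuity of $\bfu$. This last point should be stated explicitly, so that the argument remains valid with $\bfH(\div)$-conforming velocities, as in the penalty variants. The correct jump orientation should be verified against the paper's definition $\leftjump\phi\odot\bfn\rightjump = \phi_+\odot\bfn_+ + \phi_-\odot\bfn_-$.
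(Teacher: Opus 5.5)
Your proposal is correct and follows the paper's route: you reduce to the kinematic identity via \eqref{eq:enstrophy_discrete}, then prove it cell by cell using $\bfomega\times\curl\bfomega = \tfrac12\nabla|\bfomega|^2 - (\bfomega\cdot\nabla)\bfomega$, the reduction $\bfomega\cdot\nabla\bfu\cdot\bfomega = \bfomega\cdot\varepsilon\,\bfu\cdot\bfomega$, and the divergence theorem applied to $\tfrac12|\bfomega|^2\bfu$ with $\div\bfu = 0$; the paper packages these same steps as an expansion of $\div[|\bfA|^2\bfB]$ in \Cref{lem:eq:stretching_identity}. Your observation that the facet term needs only normal continuity of $\bfu$ (because $\leftjump|\bfomega|^2\bfn\rightjump$ is normal to the facet) agrees with the paper's footnote on the penalty variant, and your explicit use of $\bfu\cdot\bfn = 0$ on boundary facets supplies a step the paper leaves implicit.
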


\begin{proof}
    The result follows immediately from the discrete enstrophy stability \eqref{eq:enstrophy_discrete} established in \Cref{th:stability} and the technical lemma below (\Cref{lem:eq:stretching_identity}), which allows us to rewrite the advective term $\int_\Omega \bfu \cdot (\bfomega \times \curl\bfomega)$.
\end{proof}

Two terms in \eqref{eq:enstrophy_stretching} are distinct from the continuous form \eqref{eq:enstrophy_stretching_continuous}.
The former $\frac{1}{2}\int_{\calF^h} \leftjump |\bfomega|^2 \, \bfn \rightjump \cdot \bfu$ represents a contribution to vortex stretching on the facets $\calF^h$.
Jumps across facets in the vorticity magnitude $|\bfomega|^2$ that align with the flow $\bfu$ contribute to vortex stretching (and vice versa).

The latter $\int_{\calT^h} \bfomega \cdot \nabla[\bfu\cdot\bfomega]$ is a defect caused by the failure of $\bfomega$ to be exactly divergence-free.
By \eqref{eq:semidiscrete_3d_a}, were the local helicity density $\bfu\cdot\bfomega$ itself a member of $\bbA_0$, the term would vanish identically.
In general, however, this is not the case;
thus the residual is governed by how well $\bfu\cdot\bfomega$ may be approximated in $\bbA_0$, and is small when the auxiliary space is rich enough to resolve it.

\begin{remark}[Facet terms and higher regularity Stokes complexes]
    If instead our discrete spaces form a higher regularity discrete Stokes complex
    \begin{equation}
    \begin{tikzcd}
        H^2 \cap H^1_0 \arrow{r}{\grad}
            & \bfH^1(\curl) \cap \bfH_0(\curl) \arrow{r}{\curl}
            & \bfH^1 \cap \bfH_0(\div) \arrow{r}{\div}
            & L^2  \\
        \bbA_0 \arrow{r}{\grad} \arrow[u]
            & \bbW_0 \arrow{r}{\curl} \arrow[u]
            & \bbU_0 \arrow{r}{\div} \arrow[u]
            & \bbP \arrow[u]
    \end{tikzcd},
    \end{equation}
    then having $\bfomega \in \bfH^1$ implies that both (i)~the facet term $\frac{1}{2}\int_{\calF^h} \leftjump |\bfomega|^2 \, \bfn \rightjump \cdot \bfu$ in \eqref{eq:enstrophy_stretching} vanishes, and (ii)~the integral $\int_{\calT^h} \bfomega \cdot \nabla[\bfu\cdot\bfomega]$ becomes well-defined when taken over the whole domain $\Omega$ (i.e.~the integrand $\bfomega \cdot \nabla[\bfu\cdot\bfomega]$ belongs to $L^1$).
    We consider only the lower regularity complex \eqref{eq:stokes_3d_bc} in this manuscript, as the higher regularity above is not required for conformity of the discretisation \eqref{eq:semidiscrete_3d} nor the energy and enstrophy stability results \eqref{eq:discrete_stability} as initially stated in the form of \eqref{eq:enstrophy}.
\end{remark}

\begin{lemma}[Vortex stretching identity]\label{lem:eq:stretching_identity}
    For $\bfomega \in \bbW_0$ and divergence-free $\bfu \in \bbU_0$, the following identity holds:
    \begin{equation}\label{eq:stretching_identity}
        \int_\Omega \bfu\cdot\big(\bfomega\times\curl\bfomega\big)
            =  \int_\Omega \bfomega \cdot \varepsilon\,\bfu \cdot \bfomega
            +  \frac{1}{2}\int_{\calF^h} \leftjump |\bfomega|^2 \, \bfn \rightjump \cdot \bfu
            -  \int_{\calT^h} \bfomega \cdot \nabla\big[\bfu\cdot\bfomega\big].
    \end{equation}
\end{lemma}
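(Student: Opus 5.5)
The proof is an elementwise calculation: apply two pointwise vector identities inside each cell $K \in \calT^h$, then integrate by parts cell by cell. Since $\bfomega \in \bbW_0$ and $\bfu \in \bbU_0$ are piecewise polynomial (smooth on each cell $K$), every classical identity holds pointwise in cell interiors. The left-hand side is a genuine $L^1(\Omega)$ integral, so it equals $\int_{\calT^h} \bfu\cdot(\bfomega\times\curl\bfomega)$. Globally, we only use that $\bfu \in \bfH_0(\div)$ with $\div\bfu = 0$ pointwise.

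\textbf{Step 1 (pointwise rewriting on each cell).} On each $K$, use the Lamb-type identity
\begin{equation*}
\bfomega\times\curl\bfomega = \nabla\big[\tfrac12|\bfomega|^2\big] - (\bfomega\cdot\nabla)\bfomega ,
\end{equation*}
so that
\begin{equation*}
\bfu\cdot(\bfomega\times\curl\bfomega) = \bfu\cdot\nabla\big[\tfrac12|\bfomega|^2\big] - \bfu\cdot(\bfomega\cdot\nabla)\bfomega .
\end{equation*}
Next expand by the product rule:
\begin{equation*}
\bfomega\cdot\nabla[\bfu\cdot\bfomega] = \omega_j\omega_i\,\partial_j u_i + u_i\,\omega_j\,\partial_j\omega_i .
\end{equation*}
Hence
\begin{equation*}
-\bfu\cdot(\bfomega\cdot\nabla)\bfomega = \bfomega\cdot\nabla\bfu\cdot\bfomega - \bfomega\cdot\nabla[\bfu\cdot\bfomega].
\end{equation*}
The quadratic form $\bfomega\cdot\nabla\bfu\cdot\bfomega$ sees only the symmetric part of $\nabla\bfu$, so it equals $\bfomega\cdot\varepsilon\,\bfu\cdot\bfomega$. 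Integrating over all cells already produces the first and third terms on the right of \eqref{eq:stretching_identity}. Since $\bfu$ is continuous and $\bfomega$ is in $L^\infty$, the first term is a global integral.

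\textbf{Step 2 (the gradient term becomes a facet term).} On each cell, integrate by parts using $\div\bfu = 0$ pointwise, which is established in the proof of \Cref{th:stability}:
\begin{equation*}
\int_K \bfu\cdot\nabla\big[\tfrac12|\bfomega|^2\big] = \tfrac12\int_{\partial K}|\bfomega|^2\,\bfu\cdot\bfn_K .
\end{equation*}
Now sum over $K$.
\begin{itemize}
\item On boundary facets the contribution vanishes, because $\bfu\cdot\bfn = 0$ for $\bfu \in \bbU_0 \subset \bfH_0(\div)$.
\item On each interior facet, $\bfu\cdot\bfn$ is single-valued because $\bfu \in \bfH(\div)$ (indeed $\bfu \in \bfH^1$). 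The two one-sided contributions therefore combine into $\tfrac12\leftjump|\bfomega|^2\,\bfn\rightjump\cdot\bfu$, where $|\bfomega|^2$ may jump because $\bfomega$ is only $\bfH(\grad\curl)$-conforming.
\end{itemize}
This gives the middle term of \eqref{eq:stretching_identity}, with the sign convention for the jump operator defined in \Cref{sec:notation}.

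\textbf{Main obstacle.} The algebra is routine. The delicate point is bookkeeping the regularity. The terms $\int\bfomega\cdot\nabla[\bfu\cdot\bfomega]$ and $\int \bfu\cdot\nabla\tfrac12|\bfomega|^2$ are only meaningful cellwise, since $\bfomega$ need not have continuous normal components. One must therefore make sure that no global integration by parts is used on $\bfomega$. Only $\bfu$'s normal continuity and the boundary condition $\bfu\cdot\bfn = 0$ are invoked, and the facet orientation must match the definition of the jump. Notably, the weak divergence-free condition \eqref{eq:semidiscrete_3d_a} on $\bfomega$ is not needed at all. That is why the defect term $\int_{\calT^h}\bfomega\cdot\nabla[\bfu\cdot\bfomega]$ remains in the identity.
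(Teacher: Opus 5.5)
Your proof is correct and follows essentially the same route as the paper. The paper expands $\div\big[|\bfomega|^2\bfu\big]$ cellwise using the same Lamb identity, product rule and symmetric-part reduction, then integrates over each cell with $\div\bfu = 0$ and sums over cells, which is your computation run in the opposite direction. Your explicit handling of boundary facets via $\bfu\cdot\bfn = 0$ and your remark that only normal continuity of $\bfu$ is used are correct; the paper leaves the former implicit and makes the latter in a footnote about the penalty method.
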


\begin{proof}
    Consider first a pair $\bfA$, $\bfB$ of general smooth vector fields:
    \begin{subequations}
    \begin{align}
        \div\big[|\bfA|^2\bfB\big]
            &=  \nabla\big[|\bfA|^2\big]\cdot\bfB
            +  (\div\bfB)\,|\bfA|^2  \\
            &=  2 \, \big(\bfA\cdot\nabla\bfA + \bfA\times\curl\bfA\big)\cdot\bfB
            +  (\div\bfB)\,|\bfA|^2  \\
            &=  2 \, \bfA\cdot\big(\nabla\big[\bfB\cdot\bfA\big] - \nabla\bfB\cdot\bfA\big)
            +  2 \, \bfB\cdot\big(\bfA\times\curl\bfA\big)
            +  (\div\bfB)\,|\bfA|^2  \\
            &=  2 \, \bfA\cdot\nabla\big[\bfB\cdot\bfA\big]
            -  2 \, \bfA\cdot\varepsilon\,\bfB\cdot\bfA
            +  2 \, \bfB\cdot\big(\bfA\times\curl\bfA\big)
            +  (\div\bfB)\,|\bfA|^2.
    \end{align}
    \end{subequations}
    The second line holds by the classical identity for the gradient $\nabla[|\bfA|^2]$ (cf.~the rotational form for advection in the Navier--Stokes equation \eqref{eq:navier-stokes_1}), the third holds by expanding $\nabla[\bfB\cdot\bfA] = \nabla\bfA\cdot\bfB + \nabla\bfB\cdot\bfA$, and the final line holds by replacing $\nabla \bfB$ in $\bfA\cdot\nabla\bfB\cdot\bfA$ with its symmetric component $\varepsilon\,\bfB$.
    Integrating over a cell $K \in \calT^h$,
    \begin{equation}
        \frac{1}{2}\oint_{\partial K} |\bfA|^2 \bfB\cdot\bfn
            =  \int_K \left(
               \bfA\cdot\nabla\big[\bfB\cdot\bfA\big]
            -  \bfA\cdot\varepsilon\,\bfB\cdot\bfA
            +  \bfB\cdot\big(\bfA\times\curl\bfA\big)
            +  \frac{1}{2}(\div\bfB)\,|\bfA|^2
            \right).
    \end{equation}
    Considering $\bfA = \bfomega$, $\bfB = \bfu$,
    \begin{equation}
        \frac{1}{2}\oint_{\partial K} |\bfomega|^2 \bfu\cdot\bfn
            =  \int_K \left(
               \bfomega\cdot\nabla\big[\bfu\cdot\bfomega\big]
            -  \bfomega\cdot\varepsilon\,\bfu\cdot\bfomega
            +  \bfu\cdot\big(\bfomega\times\curl\bfomega\big)
            \right),
    \end{equation}
    where the final term $\frac{1}{2}(\div\bfu)\,|\bfomega|^2$ vanishes by the assumed incompressibility $\div\bfu = 0$.
    Summing over cells,
    \begin{equation}
        \frac{1}{2}\int_{\calF^h} \leftjump |\bfomega|^2 \bfu\cdot\bfn \rightjump
            =  \int_{\calT^h} \left(
               \bfomega\cdot\nabla\big[\bfu\cdot\bfomega\big]
            -  \bfomega\cdot\varepsilon\,\bfu\cdot\bfomega
            +  \bfu\cdot\big(\bfomega\times\curl\bfomega\big)
            \right)
            =  \frac{1}{2}\int_{\calF^h} \leftjump |\bfomega|^2 \bfn \rightjump \cdot \bfu,
    \end{equation}
    where we use the continuity of $\bfu$ to write $\leftjump |\bfomega|^2 \bfu\cdot\bfn \rightjump = \leftjump |\bfomega|^2 \bfn \rightjump \cdot \bfu$\footnote{
        This relies on the $\bfH^1$-continuity of $\bfu$.
        For the penalty method of \Cref{sec:penalty}, where only the normal component $\bfu\cdot\bfn$ is continuous across facets, the identity \eqref{eq:stretching_identity} still holds with $\bfu$ replaced by its average $\leftavg \bfu \rightavg$ in the facet term.
    }.
    Rearranging gives the desired identity \eqref{eq:stretching_identity}.
\end{proof}

\subsection{Periodic boundary conditions}\label{sec:periodic_1}

Convex domains without periodic boundaries are necessarily contractible.
A common setting for the Navier--Stokes equations, however, is the \emph{periodic box}, e.g.~for modelling flow in a channel or pipe.
For example, in \Cref{sec:hill_vortex} we consider our proposed 3D discretisation \eqref{eq:semidiscrete_3d} over the fully periodic cube.
The non-trivial topologies of such domains affect the exactness of the Stokes complex \eqrefs{eq:stokes_3d_bc,eq:stokes_2d_bc} through the introduction of (non-trivial) harmonic forms in $\frakH^k_0$.
Depending on where these harmonic forms appear in the sequence, the solvability and well-posedness of the discretisation can be affected.

\begin{remark}[Harmonic forms in periodic boxes]\label{rem:betti}
    A box $\Omega \subset \bbR^n$ with $p$ periodic directions is homotopy equivalent to the $p$-torus, giving Betti numbers \eqref{eq:betti}
    \begin{equation}
        \dim \frakH^k_0
            \;(=  b_{n-k})\;
            =  \begin{cases}
                \binom{p}{n-k}, & k \ge n - p, \\
                0,              & k < n - p.
            \end{cases}
    \end{equation}
    Harmonic $k$-forms are present precisely for $k \ge n - p$, each further periodic direction lowering the threshold by one.
\end{remark}

\Cref{fig:harmonic_forms} shows periodic boxes with topologies that exhibit harmonic forms at each stage in the complex.
Pairs of arrows mark pairs of periodic edges (in 2D) or faces (in 3D), with the number of tips and the colour distinguishing the periodic directions;
non-periodic (i.e.~free slip) portions of the boundary are hatched in 2D and shaded in 3D.
We consider the implication of (non-trivial) harmonic forms for each variable in our proposed scheme \eqrefs{eq:semidiscrete_3d,eq:semidiscrete_2d}.

\begin{figure}[pos=!ht]
    \centering
    \begin{subfigure}{0.48\textwidth}
        \centering
        \begin{tikzpicture}[scale=0.5, baseline=(current bounding box.center), >=stealth,
                edge/.style   = {line width=1.1pt, black},
                lite/.style   = {line width=1.1pt, black!40},
                hidden/.style = {line width=0.6pt, black!35, densely dashed},
                glass/.style  = {fill=black!15, fill opacity=0.62},
                wall/.style   = {fill=black!48, fill opacity=0.62},
                hatch/.style = {black!55, line width=0.5pt},
                per/.style   = {seabornblue, line width=1pt, -{Straight Barb[length=1.1mm, width=2.4mm]}},
                perr/.style  = {seaborngreen, line width=1pt, -{Straight Barb[length=1.1mm, width=2.4mm] Straight Barb[length=1.1mm, width=2.4mm]}},
                perrr/.style = {seabornred, line width=1pt, -{Straight Barb[length=1.1mm, width=2.4mm] Straight Barb[length=1.1mm, width=2.4mm] Straight Barb[length=1.1mm, width=2.4mm]}}]
            \draw[hidden] (0.72,0.52) -- (2.72,0.52);
            \draw[hidden] (0.72,0.52) -- (0.72,2.52);
            \draw[hidden] (0.72,0.52) -- (0,0);
            \draw[per] (-0.75,1.26) -- (0.55,1.26);
            \draw[perr] (1.36,-0.75) -- (1.36,0.55);
            \draw[perrr] (1.86,1.62) -- (3.16,2.56);
            \fill[glass] (0,2) -- (2,2) -- (2.72,2.52) -- (0.72,2.52) -- cycle;
            \fill[glass] (2,0) -- (2.72,0.52) -- (2.72,2.52) -- (2,2) -- cycle;
            \fill[glass] (0,0) rectangle (2,2);
            \draw[lite] (0,0) rectangle (2,2);
            \draw[lite] (2,0) -- (2.72,0.52) -- (2.72,2.52) -- (2,2);
            \draw[lite] (0,2) -- (0.72,2.52) -- (2.72,2.52);
            \draw[per] (1.85,1.26) -- (3.35,1.26);
            \draw[perr] (1.36,1.85) -- (1.36,3.3);
            \draw[perrr] (-0.15,0.17) -- (0.78,0.84);
        \end{tikzpicture}
        \caption{$\alpha$ (0-forms in 3D)}\label{fig:harmonic_alpha}
    \end{subfigure}
    \begin{subfigure}{0.48\textwidth}
        \centering
        \begin{tikzpicture}[scale=0.5, baseline=(current bounding box.center), >=stealth,
                edge/.style  = {line width=1.1pt, black},
                lite/.style  = {line width=1.1pt, black!40},
                hatch/.style = {black!55, line width=0.5pt},
                per/.style   = {seabornblue, line width=1pt, -{Straight Barb[length=1.1mm, width=2.4mm]}},
                perr/.style  = {seaborngreen, line width=1pt, -{Straight Barb[length=1.1mm, width=2.4mm] Straight Barb[length=1.1mm, width=2.4mm]}},
                perrr/.style = {seabornred, line width=1pt, -{Straight Barb[length=1.1mm, width=2.4mm] Straight Barb[length=1.1mm, width=2.4mm] Straight Barb[length=1.1mm, width=2.4mm]}}]
            \fill[black!5] (0,0) rectangle (2.4,2.4);
            \draw[lite] (0,0) rectangle (2.4,2.4);
            \draw[per] (-0.75,1.2) -- (0.45,1.2);
            \draw[per] (1.95,1.2) -- (3.15,1.2);
            \draw[perr] (1.2,-0.75) -- (1.2,0.45);
            \draw[perr] (1.2,1.95) -- (1.2,3.15);
        \end{tikzpicture}\hspace{3mm}%
        \begin{tikzpicture}[scale=0.5, baseline=(current bounding box.center), >=stealth,
                edge/.style   = {line width=1.1pt, black},
                lite/.style   = {line width=1.1pt, black!40},
                hidden/.style = {line width=0.6pt, black!35, densely dashed},
                glass/.style  = {fill=black!15, fill opacity=0.62},
                wall/.style   = {fill=black!48, fill opacity=0.62},
                hatch/.style = {black!55, line width=0.5pt},
                per/.style   = {seabornblue, line width=1pt, -{Straight Barb[length=1.1mm, width=2.4mm]}},
                perr/.style  = {seaborngreen, line width=1pt, -{Straight Barb[length=1.1mm, width=2.4mm] Straight Barb[length=1.1mm, width=2.4mm]}},
                perrr/.style = {seabornred, line width=1pt, -{Straight Barb[length=1.1mm, width=2.4mm] Straight Barb[length=1.1mm, width=2.4mm] Straight Barb[length=1.1mm, width=2.4mm]}}]
            \draw[hidden] (0.72,0.52) -- (2.72,0.52);
            \draw[hidden] (0.72,0.52) -- (0.72,2.52);
            \draw[hidden] (0.72,0.52) -- (0,0);
            \draw[per] (-0.75,1.26) -- (0.55,1.26);
            \draw[perr] (1.86,1.62) -- (3.16,2.56);
            \fill[wall]  (0,2) -- (2,2) -- (2.72,2.52) -- (0.72,2.52) -- cycle;
            \fill[glass] (2,0) -- (2.72,0.52) -- (2.72,2.52) -- (2,2) -- cycle;
            \fill[glass] (0,0) rectangle (2,2);
            \draw[lite] (0,0) -- (0,2);
            \draw[lite] (2,0) -- (2,2);
            \draw[lite] (2.72,0.52) -- (2.72,2.52);
            \draw[edge] (0,0) -- (2,0) -- (2.72,0.52);
            \draw[edge] (0,2) -- (2,2) -- (2.72,2.52) -- (0.72,2.52) -- cycle;
            \draw[per] (1.85,1.26) -- (3.35,1.26);
            \draw[perr] (-0.15,0.17) -- (0.78,0.84);
        \end{tikzpicture}
        \caption{$\bfomega$ (1-forms in 3D, 0-forms in 2D)}\label{fig:harmonic_omega}
    \end{subfigure}

    \vspace{4mm}

    \begin{subfigure}{0.48\textwidth}
        \centering
        \begin{tikzpicture}[scale=0.5, baseline=(current bounding box.center), >=stealth,
                edge/.style  = {line width=1.1pt, black},
                lite/.style  = {line width=1.1pt, black!40},
                hatch/.style = {black!55, line width=0.5pt},
                per/.style   = {seabornblue, line width=1pt, -{Straight Barb[length=1.1mm, width=2.4mm]}},
                perr/.style  = {seaborngreen, line width=1pt, -{Straight Barb[length=1.1mm, width=2.4mm] Straight Barb[length=1.1mm, width=2.4mm]}},
                perrr/.style = {seabornred, line width=1pt, -{Straight Barb[length=1.1mm, width=2.4mm] Straight Barb[length=1.1mm, width=2.4mm] Straight Barb[length=1.1mm, width=2.4mm]}}]
            \fill[black!5] (0,0) rectangle (2.4,2.4);
            \draw[lite] (0,0) -- (0,2.4);  \draw[lite] (2.4,0) -- (2.4,2.4);
            \draw[edge] (0,0) -- (2.4,0);  \draw[edge] (0,2.4) -- (2.4,2.4);
            \foreach \t in {0.2,0.5,...,2.4} {\draw[hatch] (\t,0) -- ++(-0.18,-0.18);}
            \foreach \t in {0.05,0.35,...,2.25} {\draw[hatch] (\t,2.4) -- ++(0.18,0.18);}
            \draw[per] (-0.75,1.2) -- (0.45,1.2);
            \draw[per] (1.95,1.2) -- (3.15,1.2);
        \end{tikzpicture}\hspace{3mm}%
        \begin{tikzpicture}[scale=0.5, baseline=(current bounding box.center), >=stealth,
                edge/.style   = {line width=1.1pt, black},
                lite/.style   = {line width=1.1pt, black!40},
                hidden/.style = {line width=0.6pt, black!35, densely dashed},
                glass/.style  = {fill=black!15, fill opacity=0.62},
                wall/.style   = {fill=black!48, fill opacity=0.62},
                hatch/.style = {black!55, line width=0.5pt},
                per/.style   = {seabornblue, line width=1pt, -{Straight Barb[length=1.1mm, width=2.4mm]}},
                perr/.style  = {seaborngreen, line width=1pt, -{Straight Barb[length=1.1mm, width=2.4mm] Straight Barb[length=1.1mm, width=2.4mm]}},
                perrr/.style = {seabornred, line width=1pt, -{Straight Barb[length=1.1mm, width=2.4mm] Straight Barb[length=1.1mm, width=2.4mm] Straight Barb[length=1.1mm, width=2.4mm]}}]
            \draw[hidden] (0.72,0.52) -- (2.72,0.52);
            \draw[hidden] (0.72,0.52) -- (0.72,2.52);
            \draw[hidden] (0.72,0.52) -- (0,0);
            \draw[per] (-0.75,1.26) -- (0.55,1.26);
            \fill[wall]  (0,2) -- (2,2) -- (2.72,2.52) -- (0.72,2.52) -- cycle;
            \fill[glass] (2,0) -- (2.72,0.52) -- (2.72,2.52) -- (2,2) -- cycle;
            \fill[wall]  (0,0) rectangle (2,2);
            \draw[edge] (0,0) rectangle (2,2);
            \draw[edge] (2,0) -- (2.72,0.52) -- (2.72,2.52) -- (2,2);
            \draw[edge] (0,2) -- (0.72,2.52) -- (2.72,2.52);
            \draw[per] (1.85,1.26) -- (3.35,1.26);
        \end{tikzpicture}
        \caption{$\bfu$ (2-forms in 3D, 1-forms in 2D)}\label{fig:harmonic_u}
    \end{subfigure}
    \begin{subfigure}{0.48\textwidth}
        \centering
        \begin{tikzpicture}[scale=0.5, baseline=(current bounding box.center), >=stealth,
                edge/.style  = {line width=1.1pt, black},
                lite/.style  = {line width=1.1pt, black!40},
                hatch/.style = {black!55, line width=0.5pt},
                per/.style   = {seabornblue, line width=1pt, -{Straight Barb[length=1.1mm, width=2.4mm]}},
                perr/.style  = {seaborngreen, line width=1pt, -{Straight Barb[length=1.1mm, width=2.4mm] Straight Barb[length=1.1mm, width=2.4mm]}},
                perrr/.style = {seabornred, line width=1pt, -{Straight Barb[length=1.1mm, width=2.4mm] Straight Barb[length=1.1mm, width=2.4mm] Straight Barb[length=1.1mm, width=2.4mm]}}]
            \fill[black!5] (0,0) rectangle (2.4,2.4);
            \draw[edge] (0,0) rectangle (2.4,2.4);
            \foreach \t in {0.2,0.5,...,2.4} {\draw[hatch] (\t,0) -- ++(-0.18,-0.18);}
            \foreach \t in {0.05,0.35,...,2.25} {\draw[hatch] (\t,2.4) -- ++(0.18,0.18);}
            \foreach \t in {0.2,0.5,...,2.4} {\draw[hatch] (0,\t) -- ++(-0.18,-0.18);}
            \foreach \t in {0.05,0.35,...,2.25} {\draw[hatch] (2.4,\t) -- ++(0.18,0.18);}
        \end{tikzpicture}\hspace{8mm}%
        \begin{tikzpicture}[scale=0.5, baseline=(current bounding box.center), >=stealth,
                edge/.style   = {line width=1.1pt, black},
                lite/.style   = {line width=1.1pt, black!40},
                hidden/.style = {line width=0.6pt, black!35, densely dashed},
                glass/.style  = {fill=black!15, fill opacity=0.62},
                wall/.style   = {fill=black!48, fill opacity=0.62},
                hatch/.style = {black!55, line width=0.5pt},
                per/.style   = {seabornblue, line width=1pt, -{Straight Barb[length=1.1mm, width=2.4mm]}},
                perr/.style  = {seaborngreen, line width=1pt, -{Straight Barb[length=1.1mm, width=2.4mm] Straight Barb[length=1.1mm, width=2.4mm]}},
                perrr/.style = {seabornred, line width=1pt, -{Straight Barb[length=1.1mm, width=2.4mm] Straight Barb[length=1.1mm, width=2.4mm] Straight Barb[length=1.1mm, width=2.4mm]}}]
            \draw[hidden] (0.72,0.52) -- (2.72,0.52);
            \draw[hidden] (0.72,0.52) -- (0.72,2.52);
            \draw[hidden] (0.72,0.52) -- (0,0);
            \fill[wall] (0,2) -- (2,2) -- (2.72,2.52) -- (0.72,2.52) -- cycle;
            \fill[wall] (2,0) -- (2.72,0.52) -- (2.72,2.52) -- (2,2) -- cycle;
            \fill[wall] (0,0) rectangle (2,2);
            \draw[edge] (0,0) rectangle (2,2);
            \draw[edge] (2,0) -- (2.72,0.52) -- (2.72,2.52) -- (2,2);
            \draw[edge] (0,2) -- (0.72,2.52) -- (2.72,2.52);
        \end{tikzpicture}
        \caption{$p$ (3-forms in 3D, 2-forms in 2D)}\label{fig:harmonic_p}
    \end{subfigure}
    \caption{
        Domains with differing periodicity exhibiting non-trivial harmonic forms $\frakH^k_0$, potentially presenting issues for different variables in our proposed discretisation \eqrefs{eq:semidiscrete_3d,eq:semidiscrete_2d}.
        Pairs of arrows identify periodic edges (in 2D) or faces (in 3D), distinguished by tip count and colour;
        the remaining, free slip boundaries are hatched in 2D and shaded in 3D.
        Note that a periodic box with non-trivial harmonic $k$-forms ($\frakH^k_0 \ne \{0\}$) will exhibit non-trivial harmonic $l$-forms for all $l \ge k$ ($\frakH^l_0 \ne \{0\}$).
    }\label{fig:harmonic_forms}
\end{figure}

\paragraph{The Lagrange multiplier $\alpha$ (0-forms in 3D, \Cref{fig:harmonic_alpha}).}
Any harmonic 0-form $\alpha_\rmH \in \frakH^0_0 \subset \bbA_0$ must be constant (it must satisfy $\nabla \alpha_\rmH = 0$ by definition) and can therefore only be present when the domain has periodic boundary conditions over the whole boundary $\partial\Omega$.
This causes issues for our 3D semidiscretisation \eqref{eq:semidiscrete_3d} as $\alpha \in \bbA_0$ appears only through its gradient.
With fully periodic boundaries, the scheme becomes ill-posed as $\alpha \in \bbA_0 \, (= \bbA)$ is ill-defined up to a constant.
This freedom is theoretically benign (we are typically interested in only the physical variables $\bfomega$, $\bfu$ and $p$);
the real casualty is the invertibility of the discrete system.

Two solutions present themselves:
(i)~Adding the term $(\alpha, \beta)$ to \eqref{eq:semidiscrete_3d_a} removes the nullspace outright.
As established in the proof of \Cref{th:stability}, $\alpha$ must be constant;
with this term added, taking $\beta = 1$ in \eqref{eq:semidiscrete_3d_a} confirms $\alpha = 0$, eliminating the nullspace.
(ii)~With certain linear solvers, such nullspaces may be reported to the solver as one might typically report the constant pressure nullspace (see the discussion below).

\paragraph{The vorticity $\bfomega$ (1-forms in 3D, 0-forms in 2D, \Cref{fig:harmonic_omega}).}
In 3D, the harmonic 1-forms $\bfomega_\rmH \in \frakH^1_0 \subset \bbW_0$ are the $\curl$-free fields that are $\bfL^2$-orthogonal to $\nabla\bbA_0$\footnote{
    On a periodic box, they are the constant fields orthogonal to two (distinct) periodic directions, i.e.~the constant field in the $z$ direction in \Cref{fig:harmonic_omega}, or constant fields in any of the cardinal directions in the fully periodic case.
}.
Considering $\bfomega = \bfomega_\rmH$ in \eqref{eq:semidiscrete_3d}, it is simple to see that the system defining the vorticity \eqrefs{eq:semidiscrete_3d_a,eq:semidiscrete_3d_b} has a nullspace corresponding precisely to the harmonic component of $\bfomega \in \bbW_0$.
Similarly in 2D, the harmonic 0-forms $\frakH^0_0 \subset \bbW_0$ are the constant functions, present when the boundaries are fully periodic.
Considering constant $\omega$ in \eqref{eq:semidiscrete_2d}, we see there is a similar nullspace.

Unlike the Lagrange multiplier $\alpha$, however, this freedom is \emph{not} benign.
In 3D, the advective term $(\bfu \times \bfomega, \bfv)$ in \eqref{eq:semidiscrete_3d_c} features $\bfomega$ directly, not just $\curl\bfomega$;
similarly in 2D, the advective term $(\omega \bfu^\perp, \bfv)$ features $\omega$, not just $\nabla\omega$.
To make the system well-posed, we must fix the harmonic component of the vorticity.

Consider the continuous case in 3D, where $\bfomega = \curl\bfu$ exactly:
the Hodge decomposition places the image of $\curl$ $\bfL^2$-orthogonal to the harmonic forms.
The ill-posedness on the discrete level can therefore be repaired by enforcing $\bfL^2$-orthogonality of $\bfomega \in \bbW_0$ to the harmonic forms $\frakH^1_0$.
Equivalently, we fix the harmonic component of $\bfomega$ to be zero.

Practically, one may restore well-posedness to \eqref{eq:semidiscrete_3d} by (i)~solving also for $\bflambda \in \frakH^1_0$, modifying \eqref{eq:semidiscrete_3d_b} with the Lagrange multiplier term,
\begin{subequations}\label{eq:harmonic_constraint}
\begin{equation}
    (\curl \bfomega, \curl \bfchi)  =  2 (\varepsilon \, \bfu, \varepsilon \curl \bfchi) - (\nabla \alpha, \bfchi) + (\bflambda, \bfchi),
\end{equation}
and (ii)~imposing the additional constraint
\begin{equation}
    0  =  (\bfomega, \bfmu)
\end{equation}
\end{subequations}
for all $\bfmu \in \frakH^1_0$.
Since $\frakH^1_0$ is of a limited finite dimension ($\dim\frakH^1_0 = b_2$), a basis for $\frakH^1_0$ may be computed \emph{a priori}.
We apply a similar fix in \Cref{sec:vortex_street_3d} (with a slight distinction due to the handling of the more general boundary conditions; see \Cref{sec:bcs}) to handle harmonic forms in $\frakH^1_0$.

A similar orthogonality constraint restores well-posedness to the 2D scheme \eqref{eq:semidiscrete_2d}.
However, in this case the only possible harmonic 0-form $\omega_\rmH \in \frakH^0_0$ is the constant function.
Well-posedness is therefore restored in fully periodic 2D problems by fixing
\begin{equation}
    \int_\Omega \omega = 0,
\end{equation}
again through a Lagrange multiplier.

\paragraph{The velocity $\bfu$ (2-forms in 3D, 1-forms in 2D, \Cref{fig:harmonic_u}).}
In 3D, the harmonic 2-forms $\bfu_\rmH \in \frakH^2_0 \subset \bbU_0$ (1-forms $\frakH^1_0$ in 2D) are the divergence-free fields that are $\bfL^2$-orthogonal to $\curl\bbW_0$ (or $\grad^\perp\bbW_0$ in 2D)\footnote{
    On a periodic box, they are the constant fields in the periodic directions, i.e.~the constant field in the $x$ direction in \Cref{fig:harmonic_u}.
}.
While these do not affect the well-posedness of the velocity equations \eqrefs{eq:semidiscrete_3d_c,eq:semidiscrete_2d}\footnote{
    Were we instead to discretise the \emph{stationary} problem, the harmonic component would have to be prescribed.
    This is analogous to the necessity of prescribing the mean flow in any periodic channel.
}, exactness at $\bbU_0$ was used in the proof of enstrophy stability \eqref{eq:enstrophy_discrete} to write $\bfu = \curl\bfpsi$ for some $\bfpsi \in \bbW_0$.

We can, however, show a slightly modified version of \eqref{eq:enstrophy_discrete}.
Using the divergence-free property of $\bfu$, decompose $\bfu = \curl \bfpsi + \bfh$ for $\bfpsi \in \bbW_0$, with $\bfh \in \frakH^2_0 \subset \bbU_0$ the harmonic component\footnote{
    For the magnetofrictional equations, \cite{He_et_al_2026} recently used similar discrete Hodge decompositions to show modified discrete helicity stability results on non-trivial topologies where exactness fails.
}.
By the $\bfL^2$-orthogonality of the harmonic form $\bfh \in \frakH^2_0$ to $\curl\bfomega \in \curl\bbW_0$, \eqref{eq:stability_proof_2} reduces to
\begin{subequations}
\begin{equation}
    (\partial_t \curl\bfpsi, \curl\bfomega)
        = (\partial_t \bfu, \curl\bfomega)
        = \int_\Omega \bfu \cdot (\bfomega \times \curl\bfomega) - \frac{2}{\Re} (\varepsilon\,\bfu, \varepsilon\curl\bfomega).
\end{equation}
Through the same test functions, \eqref{eq:stability_proof_3} gives
\begin{equation}
    (\curl\bfomega, \partial_t \curl\bfpsi)
        =  2 (\varepsilon \, \bfu, \varepsilon \, \partial_t \curl\bfpsi)
        =  \partial_t \calE - 2 (\varepsilon \, \bfu, \varepsilon \, \partial_t \bfh),
    \qquad
    \| \curl\bfomega \|^2
        =  2 (\varepsilon \, \bfu, \varepsilon \curl\bfomega),
\end{equation}
\end{subequations}
giving the modified enstrophy stability result
\begin{equation}\label{eq:enstrophy_discrete_modified}
    \partial_t \calE  =  \int_\Omega \bfu \cdot (\bfomega \times \curl\bfomega) - \frac{1}{\Re} \|\curl\bfomega\|^2 + 2 (\varepsilon \, \bfu, \varepsilon \, \partial_t \bfh).
\end{equation}
On the periodic box, the harmonic component $\bfh$ is constant (assuming $\bbU_0$ contains the constant function), implying the final $2 (\varepsilon \, \bfu, \varepsilon \, \partial_t \bfh)$ term vanishes.
Thus \eqref{eq:enstrophy_discrete_modified} reduces to \eqref{eq:enstrophy_discrete}, returning the original enstrophy stability result.

\paragraph{The pressure $p$ (3-forms in 3D, 2-forms in 2D, \Cref{fig:harmonic_p}).}
Finally, any connected domain has $\dim\frakH^n_0 = b_0 = 1$, carrying exactly one harmonic form in the last slot:
the constant function.
This is the familiar pressure nullspace of any enclosed flow and requires no handling different from that used in a classical Navier--Stokes discretisation:
either (i)~normalise $\int_\Omega p = 0$ through a real-valued Lagrange multiplier, or (ii)~report it to the solver.

\vspace{2mm}

Only the vorticity then necessitates any substantial modification to the discretisation.
More general topologies (e.g.~holes, tunnels, voids) introduce harmonic forms in a similar manner, and are handled analogously in \Cref{sec:periodic_2} below, once more general boundary conditions have been considered.

\section{Practical implementation}\label{sec:implementation}

Implementation of the discretisation as presented \eqrefs{eq:semidiscrete_3d,eq:semidiscrete_2d} requires access to a discrete Stokes complex.
However, as discussed above, such complexes are rarely available in finite element software.

In this section, we consider practical alternatives that do not require the implementation of a discrete Stokes complex.
\Cref{sec:penalty} considers a penalty method, whereby the discrete Stokes complex \eqrefs{eq:stokes_3d_bc,eq:stokes_2d_bc} is modelled via a discrete de Rham complex, with penalty terms handling the non-conformity.
Sections~\ref{sec:charlie} and~\ref{sec:streamfunction} consider two alternative ways to implement the full conforming scheme without access to the full Stokes complex.
The former removes the requirement of an implementation of the vorticity space $\bbW_0$;
the latter on the other hand reparametrises solely in $\bbA_0$ and $\bbW_0$ in 3D, or solely in $\bbW_0$ in 2D.
Finally, \Cref{sec:meevc_stabilisation} borrows a $\delta$-sized component of the MEEVC scheme \eqref{eq:meevc_semidiscrete} to impose certain constraints on the vorticity implicitly, dispensing with both the Lagrange multiplier $\alpha$ and those enforcing any harmonic constraints, at the cost of a $\delta$-sized perturbation to the enstrophy stability.

\subsection{Penalty method}\label{sec:penalty}

A discrete Stokes complex is often unavailable in finite element software, in particular in 3D or on curved meshes.
We therefore consider a penalty method, in which the discrete Stokes complex \eqrefs{eq:stokes_3d_bc,eq:stokes_2d_bc} is replaced by a far more widely available discrete de Rham complex, and the resulting non-conformity of the velocity is handled by interior penalty terms on the facets \citep{Douglas_Dupont_1976,DiPietro_Ern_2011}.
The enstrophy structure survives this replacement, though in a broken form.
This discretisation is effective in 2D. However, in 3D it suffers a reparable failure mode that we return to in \Cref{sec:penalty_sigma}.


\paragraph{3D.}
Suppose instead that the discrete spaces $\bbA_0$, $\bbW_0$, $\bbU_0$, $\bbP$ form a discrete de Rham complex,
\begin{equation}
\begin{tikzcd}
    H^1_0 \arrow{r}{\grad}
        & \bfH_0(\curl) \arrow{r}{\curl}
        & \bfH_0(\div) \arrow{r}{\div}
        & L^2  \\
    \bbA_0 \arrow{r}{\grad} \arrow[u]
        & \bbW_0 \arrow{r}{\curl} \arrow[u]
        & \bbU_0 \arrow{r}{\div} \arrow[u]
        & \bbP \arrow[u]
\end{tikzcd},
\end{equation}
without the enhanced regularity of the Stokes complex \eqref{eq:stokes_3d_bc}.
To handle the non-conformity in $\bbW_0$ and $\bbU_0$, we approximate the inner product $(\varepsilon\,\cdot, \varepsilon\,\cdot)$ over $\bbU_0$ with a symmetric interior penalty term.
Define the bilinear form $\calD^h$
\begin{multline}
    \calD^h\,[\bfu, \bfv]
        \coloneqq
        \int_{\calT^h} \varepsilon\,\bfu : \varepsilon\,\bfv
      - \int_{\calF^h} \left(
            \leftavg \varepsilon\,\bfu \rightavg : \sym \leftjump \bfv \otimes \bfn \rightjump
          + \sym \leftjump \bfu \otimes \bfn \rightjump : \leftavg \varepsilon\,\bfv \rightavg
        \right)  \\
      + \sigma \int_{\calF^h} \frac{1}{h} \sym \leftjump \bfu \otimes \bfn \rightjump : \sym \leftjump \bfv \otimes \bfn \rightjump.
\end{multline}
We use $\calD^h$ to discretise the non-conforming symmetric-gradient inner products, namely $(\varepsilon\,\bfu, \varepsilon\,\curl\bfchi)$ in the vorticity equation \eqref{eq:semidiscrete_3d_b} and the viscous term $(\varepsilon\,\bfu, \varepsilon\,\bfv)$ in the momentum equation \eqref{eq:semidiscrete_3d_c}.

\begin{definition}[3D semidiscretisation (penalty method)]
    Find $(\alpha, \bfomega, \bfu, p) \in \bbA_0 \times \bbW_0 \times \bbU_0 \times \bbP$ such that
    \begin{subequations}\label{eq:semidiscrete_3d_penalty}
    \begin{align}
        0  &=  (\bfomega, \nabla \beta),  \label{eq:semidiscrete_3d_penalty_a}  \\
        (\curl \bfomega, \curl \bfchi)  &=  2 \calD^h\,[\bfu, \curl\bfchi] - (\nabla \alpha, \bfchi),  \label{eq:semidiscrete_3d_penalty_b}  \\
        (\partial_t \bfu, \bfv)  &=  (\bfu \times \bfomega, \bfv) + \left(\frac{1}{2}|\bfu|^2 + p, \div \bfv\right) - \frac{2}{\Re}\calD^h\,[\bfu, \bfv],  \\
        0  &=  (\div \bfu, q),
    \end{align}
    \end{subequations}
    for all $(\beta, \bfchi, \bfv, q) \in \bbA_0 \times \bbW_0 \times \bbU_0 \times \bbP$.
\end{definition}

\paragraph{2D.}
The penalty method here is similar.

\begin{definition}[2D semidiscretisation (penalty method)]
    Find $(\omega, \bfu, p) \in \bbW_0 \times \bbU_0 \times \bbP$ such that
    \begin{subequations}\label{eq:semidiscrete_2d_penalty}
    \begin{align}
        (\nabla \omega, \nabla \chi)  &=  - \, 2 \calD^h\,[\bfu, \nabla^\perp \chi],  \label{eq:semidiscrete_2d_penalty_a}  \\
        (\partial_t \bfu, \bfv)  &=  - \, (\omega \bfu^\perp, \bfv) + \left(\frac{1}{2}|\bfu|^2 + p, \div \bfv\right) - \frac{2}{\Re}\calD^h\,[\bfu, \bfv],  \\
        0  &=  (\div \bfu, q),
    \end{align}
    \end{subequations}
    for all $(\chi, \bfv, q) \in \bbW_0 \times \bbU_0 \times \bbP$.
\end{definition}

In both cases \eqrefs{eq:semidiscrete_3d_penalty,eq:semidiscrete_2d_penalty}, we preserve the evolution not of the enstrophy $\calE(\bfu) \coloneqq \|\varepsilon\,\bfu\|^2$, but of a broken enstrophy $\calE^h(\bfu) \coloneqq \calD^h\,[\bfu, \bfu]$, the proof being identical to that of \eqref{eq:enstrophy_discrete}.
Numerical tests with this penalty method can be found in \Cref{sec:vortex_street} below.

\begin{remark}[Boundary conditions]
    In addition to the convenience of not requiring a discrete Stokes complex, a further appeal of the penalty method comes in the strong enforcement of boundary conditions.
    For instance, the typical boundary condition for $\bfu \in \bfH^1$ is $\bfu = \bfzero$ on $\partial\Omega$, i.e.~$\bfu \in \bfH^1_0$.
    This carries over to the discrete setting for $\bfu \in \bbU \subset \bfH^1$;
    in spaces such as the continuous Lagrange space, it is simple to enforce $\bfu = \bfzero$ on $\partial\Omega$.
    In the above, however, we consider the boundary condition $\bfu \cdot \bfn = 0$, which is in general non-trivial to enforce with the higher regularity $\bbU \subset \bfH^1$.
    The de Rham complex penalty method avoids this, as the boundary condition $\bfu \cdot \bfn = 0$ is straightforward to enforce using the $H(\div)$-conforming spaces $\bbU$, e.g.~the Brezzi--Douglas--Marini space.
    Similar logic applies to $\bfomega \in \bbW$.
\end{remark}

\subsubsection{Breakdown of the vorticity in 3D}\label{sec:penalty_sigma}

With the penalty method, jumps in the velocity are controlled by the broken enstrophy $\calE^h(\bfu) \coloneqq \calD^h\,[\bfu, \bfu]$.
It carries the penalty term explicitly:
\begin{equation}\label{eq:penalty_jump_term}
    \calE^h(\bfu)  =  \cdots + \sigma \int_{\calF^h} \frac{1}{h} \left| \sym \leftjump \bfu \otimes \bfn \rightjump \right|^2,
\end{equation}
such that a bound on $\calE^h$ offers a bound on the velocity jumps.
Consider then the auxiliary Stokes problem, \eqrefs{eq:semidiscrete_3d_penalty_a,eq:semidiscrete_3d_penalty_b} in 3D and \eqref{eq:semidiscrete_2d_penalty_a} in 2D, reconstructing the vorticity $\bfomega$ from the velocity $\bfu$.
The right-hand side takes the form $2\calD^h\,[\bfu, \curl\bfchi]$;
through the final term of $\calD^h$ this carries a contribution scaling as $\sigma / h \cdot \sym \leftjump \bfu \otimes \bfn \rightjump$.

In 2D, the broken enstrophy $\calE^h$ is dissipated by \eqref{eq:enstrophy_discrete}, and hence bounded uniformly in time by its initial value.
With it, the velocity jumps \eqref{eq:penalty_jump_term} are necessarily controlled, and the right-hand side of the vorticity reconstruction \eqref{eq:semidiscrete_2d_penalty_a} poses little issue.
Moreover, if the vorticity $\omega$ were to diverge, there is no mechanism in 2D by which a large vorticity field may lead to blow-ups in the enstrophy.

In 3D, enstrophy is no longer necessarily dissipated.
There is therefore no \emph{a priori} bound on $\calE^h$, and consequently none on the velocity jumps.
Jumps in $\bfu$ across facets may offer a contribution to the vorticity $\bfomega$ through \eqref{eq:semidiscrete_3d_penalty_b} that scales with the penalty parameter $\sigma$.
In practice, we have observed the vorticity $\bfomega$ in 3D discretisations using the above penalty method \eqref{eq:semidiscrete_3d_penalty} to diverge, thus causing the solver to fail as the large vorticity field feeds back into the enstrophy $\calE^h$ through vortex stretching \eqref{eq:enstrophy_stretching}.
This is exactly the case for the discretisation in \Cref{sec:vortex_street_3d}:
the vorticity blows up in localised regions and the solver fails.
Thus, without modification, this failure mode often renders the 3D penalty method \eqref{eq:semidiscrete_3d_penalty} inadmissible.

A practical remedy to this blow-up is as follows.
Denote by $\bar{\calD}^h$ the form $\calD^h$ with its penalty parameter $\sigma$ replaced by some reduced $\bar{\sigma} \in (0, \sigma)$.
In 3D, where such instabilities occur, we propose using $\bar{\calD}^h$ in place of $\calD^h$ in the vorticity reconstruction alone,
\begin{equation}\label{eq:semidiscrete_3d_penalty_reduced}
    (\curl \bfomega, \curl \bfchi)  =  2 \bar{\calD}^h\,[\bfu, \curl\bfchi] - (\nabla \alpha, \bfchi),
\end{equation}
leaving $\calD^h$ unchanged in the momentum equation.
The amplification of the velocity jumps in the vorticity reconstruction is reduced by the ratio $\bar{\sigma} / \sigma$, weakening the influence of growth in the jumps of $\bfu$ on blow ups in $\bfomega$.
Naturally this modification is not free:
discrete enstrophy stability \eqref{eq:enstrophy_discrete} no longer holds exactly, differing through defect facet terms scaling with $(\sigma - \bar{\sigma}) / h \cdot \sym \leftjump \bfu \otimes \bfn \rightjump$\footnote{
    With this modification, one may define a \emph{reduced} broken enstrophy $\bar{\calE}^h(\bfu) \coloneqq \bar{\calD}^h\,[\bfu, \bfu]$.
    Following the proof of \Cref{th:stability}, we see
    \begin{equation}\label{eq:enstrophy_discrete_reduced}
        \partial_t \bar{\calE}^h
            =  \int_\Omega \bfu \cdot (\bfomega \times \curl\bfomega) - \frac{1}{\Re} \|\curl\bfomega\|^2
            -  \frac{2}{\Re} (\sigma - \bar{\sigma}) \int_{\calF^h} \frac{1}{h} \sym \leftjump \bfu \otimes \bfn \rightjump : \sym \leftjump \curl\bfomega \otimes \bfn \rightjump.
    \end{equation}
    The defect in the final term vanishes in the inviscid limit $\Re \to \infty$, where the discrete enstrophy identity \eqref{eq:enstrophy_discrete} therefore holds exactly for $\bar{\calE}^h$.
    In that limit, however, $\sigma$ enters only through the vorticity reconstruction;
    thus the modified discretisation coincides simply with the unmodified penalty method \eqref{eq:semidiscrete_3d_penalty} with $\bar{\sigma}$ in place of $\sigma$.
}.
This modification should be regarded as a pragmatic measure rather than one retaining the structure of \Cref{th:stability}.
We adopt this modification for the 3D penalty method simulations of \Cref{sec:vortex_street_3d}, where we also report the effect of varying $\bar{\sigma}$.

\begin{remark}[Stabilisation through strong and weak conformity]\label{rem:conformity}
    The true remedy is to discretise with a conforming Stokes complex \eqref{eq:stokes_3d_bc}.
    There $\bfu$ is continuous across facets, the jumps vanish identically, and no penalty terms pollute the vorticity.
    The conforming 3D simulations presented in \Cref{sec:hill_vortex} exhibit no such blow-up.
    The penalty method (and with it the reduction to $\bar{\sigma}$ above in the 3D case) is needed in \Cref{sec:vortex_street}, because a \emph{strongly} conforming complex would force $\bfu = \bfzero$ at the vertices of the affinely approximated curved boundary.
    A more appropriate approach there would be a \emph{weakly} conforming Stokes complex \citep{Mardal_Tai_Winther_2002,Tai_Winther_2006}, able to handle such boundaries without resorting to a penalty method.
    We leave this to future work.
\end{remark}

\subsection{Equivalent implementation without $\bfH(\grad \curl)$-conforming or $H^2$-conforming elements}\label{sec:charlie}

In \eqref{eq:semidiscrete_3d}, with the exception of the advective term in \eqref{eq:semidiscrete_3d_c} and the scalar constraints involving $\alpha$ and $\beta$, the vorticity $\bfomega \in \bbW_0$ and the test function $\bfchi \in \bbW_0$ appear exclusively as their curls, $\curl\bfomega$ and $\curl\bfchi$.
Through the exactness of the discrete Stokes complex \eqref{eq:stokes_3d_bc}, we may parametrise $\curl\bfomega$ through the divergence-free kernel in $\bbU_0$.
In principle, provided we can compute $\bfomega$ from $\curl\bfomega$, we need only have computational access to the final two spaces $\bbU$ and $\bbP$ of the Stokes complex \eqref{eq:stokes_3d}.
An identical argument holds in the 2D case \eqref{eq:semidiscrete_2d}.

This idea aligns with one proposed by \citet{Ainsworth_Parker_2024a, Ainsworth_Parker_2024b} for biharmonic-like equations, wherein the authors propose specially constructed reparametrisations that allow one to compute effectively with $H^2$-conforming scalar finite elements while requiring the implementation only of finite elements with at most $H^1$ conformity.

\paragraph{3D.}
Denoting $\bfeta = \curl\bfomega \in \bbU_0$, we propose the following reparametrisation of \eqref{eq:semidiscrete_3d}:
find $(\bfeta, R, \bfu, p) \in \bbU_0 \times \bbP \times \bbU_0 \times \bbP$ such that
\begin{subequations}\label{eq:charlie_sub1}
\begin{align}
    (\bfeta, \bfzeta)  &=  2 (\varepsilon \, \bfu, \varepsilon \bfzeta) + (R, \div \bfzeta),  \\
    0  &=  (\div \bfeta, S),  \\
    (\partial_t \bfu, \bfv)  &=  (\bfu \times \bfomega, \bfv) + \left(\frac{1}{2}|\bfu|^2 + p, \div \bfv\right) - \frac{2}{\Re}(\varepsilon\,\bfu, \varepsilon\,\bfv),  \label{eq:charlie_sub1_c}  \\
    0  &=  (\div \bfu, q),
\end{align}
\end{subequations}
for all $(\bfzeta, S, \bfv, q) \in \bbU_0 \times \bbP \times \bbU_0 \times \bbP$.

The vorticity $\bfomega$ in \eqref{eq:charlie_sub1_c} is related to $\bfeta$ by $\curl\bfomega = \bfeta$.
To complete this system, it is sufficient to be able to compute, for a given divergence-free $\bfeta$, a discretely divergence-free vorticity field $\bfomega$ such that $\curl\bfomega = \bfeta$.
This is possible in the general case, without access to either $\bbA_0$ or $\bbW_0$.
Assume the existence of a second discrete complex, a discrete de Rham complex,
\begin{equation}\label{eq:auxiliary_de_rham}
\begin{tikzcd}
    H^1_0 \arrow{r}{\grad}
        & \bfH_0(\curl) \arrow{r}{\curl}
        & \bfH_0(\div) \arrow{r}{\div}
        & L^2  \\
    \hat{\bbA}_0 \arrow{r}{\grad} \arrow[u]
        & \hat{\bbW}_0 \arrow{r}{\curl} \arrow[u]
        & \hat{\bbU}_0 \arrow{r}{\div} \arrow[u]
        & \hat{\bbP} \arrow[u]
\end{tikzcd},
\end{equation}
such that $\bbU_0 \subset \hat{\bbU}_0$.
Given that $\bfeta$ lies in the divergence-free kernel in $\bbU_0$, it must naturally lie in the divergence-free kernel in $\hat{\bbU}_0$.
Treating $\bfeta \in \bbU_0 \subset \hat{\bbU}_0$ as such, it is then sufficient that we are able to compute a (left) inverse to the $\curl$ in the auxiliary de Rham complex \eqref{eq:auxiliary_de_rham}.
This may be done as follows:
find $(\alpha, \bfomega) \in \hat{\bbA}_0 \times \hat{\bbW}_0$ such that
\begin{subequations}\label{eq:charlie_sub2}
\begin{align}
    (\bfeta, \curl\bfchi)  &= (\curl\bfomega, \curl\bfchi) + (\nabla\alpha, \bfchi),  \\
    0  &=  (\bfomega, \nabla\beta),
\end{align}
\end{subequations}
for all $(\beta, \bfchi) \in \hat{\bbA}_0 \times \hat{\bbW}_0$.
Combining \eqref{eq:charlie_sub1} and \eqref{eq:charlie_sub2}, we arrive at our equivalent semidiscretisation.

\begin{definition}[3D semidiscretisation ($\bfH(\grad \curl)$-free reparametrisation)]
    Find $(\alpha, \bfomega, \bfeta, R, \bfu, p) \in \hat{\bbA}_0 \times \hat{\bbW}_0 \times \bbU_0 \times \bbP \times \bbU_0 \times \bbP$ such that
    \begin{subequations}\label{eq:charlie_3d}
    \begin{align}
        (\bfeta, \curl\bfchi)  &= (\curl\bfomega, \curl\bfchi) + (\nabla\alpha, \bfchi),  \\
        0  &=  (\bfomega, \nabla\beta),  \\
        (\bfeta, \bfzeta)  &=  2 (\varepsilon \, \bfu, \varepsilon \bfzeta) + (R, \div \bfzeta),  \label{eq:charlie_3d_c}  \\
        0  &=  (\div \bfeta, S),  \\
        (\partial_t \bfu, \bfv)  &=  (\bfu \times \bfomega, \bfv) + \left(\frac{1}{2}|\bfu|^2 + p, \div \bfv\right) - \frac{2}{\Re}(\varepsilon\,\bfu, \varepsilon\,\bfv),  \label{eq:charlie_3d_e}  \\
        0  &=  (\div \bfu, q),
    \end{align}
    \end{subequations}
    for all $(\beta, \bfchi, \bfzeta, S, \bfv, q) \in \hat{\bbA}_0 \times \hat{\bbW}_0 \times \bbU_0 \times \bbP \times \bbU_0 \times \bbP$.
\end{definition}

If $\bbA_0 = \hat{\bbA}_0$ and $\bbW_0 \subset \hat{\bbW}_0$, then this represents an exact reparametrisation of the original semidiscretisation \eqref{eq:semidiscrete_3d}.

\paragraph{2D.}
The construction here is similar, relying on the existence of an auxiliary discrete de Rham complex,
\begin{equation}
\begin{tikzcd}
    H^1_0 \arrow{r}{\grad^\perp}
        & \bfH_0(\div) \arrow{r}{\div}
        & L^2  \\
    \hat{\bbW}_0 \arrow{r}{\grad^\perp} \arrow[u]
        & \hat{\bbU}_0 \arrow{r}{\div} \arrow[u]
        & \hat{\bbP} \arrow[u]
\end{tikzcd},
\end{equation}
such that $\bbU_0 \subset \hat{\bbU}_0$.

\begin{definition}[2D semidiscretisation ($H^2$-free reparametrisation)]
    Find $(\omega, \bfeta, R, \bfu, p) \in \hat{\bbW}_0 \times \bbU_0 \times \bbP \times \bbU_0 \times \bbP$ such that
    \begin{subequations}\label{eq:charlie_2d}
    \begin{align}
        - \, (\bfeta, \nabla^\perp\chi)  &= (\nabla\omega, \nabla\chi),  \\
        (\bfeta, \bfzeta)  &=  2 (\varepsilon \, \bfu, \varepsilon \bfzeta) + (R, \div \bfzeta),  \\
        0  &=  (\div \bfeta, S),  \\
        (\partial_t \bfu, \bfv)  &=  - \, (\omega \bfu^\perp, \bfv) + \left(\frac{1}{2}|\bfu|^2 + p, \div \bfv\right) - \frac{2}{\Re}(\varepsilon\,\bfu, \varepsilon\,\bfv),  \\
        0  &=  (\div \bfu, q),
    \end{align}
    \end{subequations}
    for all $(\chi, \bfzeta, S, \bfv, q) \in \hat{\bbW}_0 \times \bbU_0 \times \bbP \times \bbU_0 \times \bbP$.
\end{definition}

Numerical tests on these reparametrisations can be found below in \Cref{sec:kh} for the 2D case, and \Cref{sec:hill_vortex} for the 3D case.

\begin{remark}[Computational overhead of reparametrisation]\label{rem:charlie_overhead}
    While the reparametrisations proposed here \eqrefs{eq:charlie_3d,eq:charlie_2d} are theoretically effective, we note they impose a significant computational overhead.
    Consider for example the 2D Scott--Vogelius complex \eqref{eq:sv_2d}.
    Without reparametrisation, our proposed scheme \eqref{eq:semidiscrete_2d} introduces approximately 14\% more degrees of freedom (DoFs) over a standard unstabilised scheme\footnote{
        As the mesh size approaches $0$, the vorticity space $\bbH\bbC\bbT_3$ requires 3 DoFs per (macro-)cell.
        The velocity space $[\bbC\bbG_2^\rmA]^2$, however, requires 12, and the pressure space $\bbD\bbG_1^\rmA$ requires 9.
        This comes as a consequence of the higher regularity of the spaces earlier in the complex.
    }.
    However, when we apply the reparametrisation \eqref{eq:charlie_2d} in \Cref{sec:kh} to the same complex, it requires approximately 164\% more DoFs in total.
\end{remark}

\subsection{Streamfunction formulation}\label{sec:streamfunction}

Finally, we consider a streamfunction reformulation of the scheme.

\paragraph{3D.}
To reparametrise the velocity in terms of its streamfunction, we write $\bfu = \curl \bfpsi$, $\bfv = \curl \bfphi$ for some $\bfpsi, \bfphi \in \bbW_0$.
Unlike in 2D below, the 3D streamfunction $\bfpsi$ is not unique, but is defined only up to an arbitrary gauge $\nabla\xi \in \bbW_0$.
Similar to the discrete vorticity $\bfomega$, we must therefore introduce a new Lagrange multiplier to enforce uniqueness.

\begin{definition}[3D semidiscretisation (streamfunction)]
    Find $(\alpha, \bfomega, \gamma, \bfpsi) \in \bbA_0 \times \bbW_0 \times \bbA_0 \times \bbW_0$ such that
    \begin{subequations}\label{eq:semidiscrete_3d_streamfunction}
    \begin{align}
        0  &=  (\bfomega, \nabla\beta),  \\
        (\curl\bfomega, \curl\bfchi)  &=  2 (\varepsilon\curl\bfpsi, \varepsilon\curl\bfchi) - (\nabla\alpha, \bfchi),  \\
        0  &=  (\bfpsi, \nabla\delta),  \\
        (\curl\partial_t \bfpsi, \curl\bfphi)  &=  (\curl\bfpsi \times \bfomega, \curl\bfphi) - \frac{2}{\Re}(\varepsilon\curl\bfpsi, \varepsilon\curl\bfphi) - (\nabla\gamma, \bfphi),
    \end{align}
    \end{subequations}
    for all $(\beta, \bfchi, \delta, \bfphi) \in \bbA_0 \times \bbW_0 \times \bbA_0 \times \bbW_0$.
\end{definition}

It is further possible to combine this streamfunction reparametrisation with the penalty method \eqref{eq:semidiscrete_3d_penalty}.
Whether the combined dimensionality of $\bbA_0$ and $\bbW_0$ is lower than that of $\bbU_0$ and $\bbP$, and consequently whether we will see a computational gain through the streamfunction reparametrisation, depends on the complex being used.
For instance, when combined with the penalty method over discrete de Rham complexes, the polynomial complex \eqref{eq:fedr_poly} increases in dimensionality toward the start of the complex, whereas the trimmed complex \eqref{eq:fedr_trimmed} decreases.
A streamfunction reformulation would therefore be preferable in the latter case, but not the former.

\paragraph{2D.}
Here, we may simply write $\bfu = - \nabla^\perp \psi$, $\bfv = - \nabla^\perp \phi$ for some $\psi, \phi \in \bbW_0$.

\begin{definition}[2D semidiscretisation (streamfunction)]
    Find $(\omega, \psi) \in \bbW_0 \times \bbW_0$ such that
    \begin{subequations}\label{eq:semidiscrete_2d_streamfunction}
    \begin{align}
        (\nabla\omega,\nabla \chi)  &=  2 (\varepsilon \, \nabla^\perp\psi, \varepsilon \, \nabla^\perp\chi),  \\
        (\nabla\partial_t \psi, \nabla\phi)  &=  (\omega \nabla\psi, \nabla^\perp\phi) - \frac{2}{\Re}(\varepsilon \, \nabla^\perp\psi, \varepsilon \, \nabla^\perp\phi),
    \end{align}
    \end{subequations}
    for all $(\chi, \phi) \in \bbW_0 \times \bbW_0$.
\end{definition}

It is again possible to combine this reparametrisation with the penalty method \eqref{eq:semidiscrete_2d_penalty}.
As well as reducing the number of variables by 1, the dimensionality of $\bbW_0$ must be lower than the combined size of $\bbU_0$ and $\bbP$.
This reparametrisation therefore necessarily reduces the size of the discrete problem.

\subsection{MEEVC-inspired stabilisation}\label{sec:meevc_stabilisation}

Our proposed semidiscretisation \eqrefs{eq:semidiscrete_3d,eq:semidiscrete_2d} carries two computational overheads that are of little physical interest.
The first comes in 3D in the form of the Lagrange multiplier $\alpha \in \bbA_0$, the sole purpose of which is to impose the discrete divergence-free condition $(\bfomega, \nabla\beta) = 0$ on the vorticity \eqref{eq:semidiscrete_3d_a}.
Moreover, as shown in the proof of \Cref{th:stability}, $\alpha$ is necessarily zero at a solution to \eqref{eq:semidiscrete_3d}.
The second, present in both 3D and 2D whenever the domain carries non-trivial topology (\Cref{sec:periodic_1}), is the further multiplier $\bflambda \in \frakH^1_0$ (or $\lambda \in \frakH^0_0$ in 2D) required to fix the harmonic component of $\bfomega$ \eqref{eq:harmonic_constraint}.
This additionally requires a basis for $\frakH^1_0$ to be computed \emph{a priori}.
Both enlarge the discrete system, the latter furthermore requiring special handling so as not to break the locality of discretisation.

The MEEVC scheme (\citealp{Liu_E_2001}; \citealp[Sec.~7.2]{Cotter_2023}; \citealp{Hanot_2023}) avoids both, by defining its vorticity as an $\bfL^2$ projection of $\curl\bfu$ rather than through a $\curl\curl$ problem.
In 3D it is defined as follows:

\begin{definition}[MEEVC semidiscretisation]
    Find $(\bfomega, \bfu, P) \in \bbW_0 \times \bbU_0 \times \bbP$ such that
    \begin{subequations}\label{eq:meevc_semidiscrete}
    \begin{align}
        (\bfomega, \bfchi)  &=  (\bfu, \curl \bfchi),  \label{eq:meevc_semidiscrete_a}  \\
        (\partial_t \bfu, \bfv)  &=  (\bfu \times \bfomega, \bfv) + (P, \div \bfv) - \frac{1}{\Re}(\curl \bfomega, \bfv),  \\
        0  &=  (\div \bfu, q),
    \end{align}
    \end{subequations}
    for all $(\bfchi, \bfv, q) \in \bbW_0 \times \bbU_0 \times \bbP$.
\end{definition}

Restricting \eqref{eq:meevc_semidiscrete_a} to those test functions with $\curl\bfchi = \bfzero$, the right-hand side vanishes identically, so that $(\bfomega, \bfchi)  =  0$ for all curl-free $\bfchi \in \bbW_0$.
That is, the MEEVC vorticity is necessarily $\bfL^2$-orthogonal to every $\curl$-free function in $\bbW_0$.
By the definition of the discrete harmonic forms, the curl-free subspace of $\bbW_0$ decomposes ($\bfL^2$-orthogonally) as $\nabla\bbA_0 \oplus \frakH^1_0$.
The orthogonality condition in MEEVC therefore delivers both of the constraints we impose by hand:
taking $\bfchi = \nabla\beta$ recovers the discrete divergence-free condition \eqref{eq:semidiscrete_3d_a}, and taking $\bfchi \in \frakH^1_0$ recovers the harmonic constraint \eqref{eq:harmonic_constraint}.

We would like to borrow this behaviour without surrendering the primal enstrophy stability that distinguishes our scheme from MEEVC (\Cref{th:stability}).
A simple proposal is to add a $\delta$-sized component of the MEEVC projection \eqref{eq:meevc_semidiscrete_a} to the equation defining our vorticity, for some chosen $0 < \delta \ll 1$.

\begin{definition}[3D semidiscretisation ($\delta$-stabilised)]
    Find $(\bfomega, \bfu, p) \in \bbW_0 \times \bbU_0 \times \bbP$ such that
    \begin{subequations}\label{eq:semidiscrete_3d_delta}
    \begin{align}
        (\curl \bfomega, \curl \bfchi) + \delta \, (\bfomega, \bfchi)
            &=  2 (\varepsilon \, \bfu, \varepsilon \curl \bfchi) + \delta \, (\bfu, \curl \bfchi),  \label{eq:semidiscrete_3d_delta_a} \\
        (\partial_t \bfu, \bfv)  &=  (\bfu \times \bfomega, \bfv) + \left(\frac{1}{2}|\bfu|^2 + p, \div \bfv\right) - \frac{2}{\Re}(\varepsilon\,\bfu, \varepsilon\,\bfv),  \label{eq:semidiscrete_3d_delta_b} \\
        0  &=  (\div \bfu, q),  \label{eq:semidiscrete_3d_delta_c}
    \end{align}
    \end{subequations}
    for all $(\bfchi, \bfv, q) \in \bbW_0 \times \bbU_0 \times \bbP$.
\end{definition}

In this scheme, $\alpha$ does not appear, and $\bflambda$ is not required.
Just as in the MEEVC scheme, the discrete divergence-free condition on the vorticity $\bfomega$ \eqref{eq:semidiscrete_3d_a} can be seen by considering $\bfchi = \nabla\beta$ for $\beta \in \bbA_0$;
the harmonic constraint \eqref{eq:harmonic_constraint} can be seen by considering $\bfchi \in \frakH^1_0$.
Note that this argument is independent of the size of $\delta$.
The vorticity remains a consistent discrete approximation to $\curl\bfu$, since \eqref{eq:semidiscrete_3d_delta_a} is a linear combination of two relations \eqrefs{eq:semidiscrete_3d_b,eq:meevc_semidiscrete_a} that the exact vorticity $\bfomega = \curl\bfu$ satisfies individually.
As $\delta \to 0$ the scheme approaches our discretisation \eqref{eq:semidiscrete_3d}, while continuing to enforce the discrete divergence-free and harmonic-free constraints implicitly by the argument above.
We adopt this stabilisation, with $\delta = 10^{-5}$, for the 3D simulations of \Cref{sec:vortex_street_3d}.

There are two costs to this stabilisation.
The first is borne by the enstrophy stability \eqref{eq:enstrophy_discrete}.
Repeating the proof of \Cref{th:stability} for \eqref{eq:semidiscrete_3d_delta}, the enstrophy law \eqref{eq:enstrophy_discrete} is recovered up to a perturbation of size $\delta$.

The second issue is that the operator on the left of \eqref{eq:semidiscrete_3d_delta_a} degenerates towards the singular $\curl\curl$ operator as $\delta \to 0$, whose kernel in $\bbW_0$ is precisely the space that $\delta$ was controlling.
The system becomes increasingly ill-conditioned for small $\delta$.
Note, however, that $\delta$-robust preconditioners for operators of exactly this form are available (see e.g.~\citet{Hiptmair_Xu_2007,Brubeck_et_al_2026}), so the ill-conditioning need not be prohibitive in practice.
The choice of $\delta$ is thus a trade-off between the conditioning of the vorticity block and the size of the perturbation to the enstrophy stability.

\begin{remark}[Inconsistent vorticity reconstruction]
    The stabilised scheme \eqref{eq:semidiscrete_3d_delta} could be made yet more efficient by removing the right-hand side $\delta \, (\bfu, \curl \bfchi)$ term in \eqref{eq:semidiscrete_3d_delta_a}.
    In this case, the remaining left-hand side $\delta \, (\bfomega, \bfchi)$ term could be interpreted as a kind of stabilisation (instead of the interpretation as a $\delta$-sized MEEVC component).
    However, this would render $\bfomega$ only a consistent approximation to $\curl\bfu$ in the limit $\delta \to 0$:
    the vorticity reconstruction \eqref{eq:semidiscrete_3d_delta_a} would not in general be solved by $\bfomega = \curl\bfu$.
\end{remark}

The same construction applies in 2D.
There, the multiplier $\alpha$ is already absent, but a harmonic 0-form $\omega_\rmH \in \frakH^0_0$ is present under fully periodic boundary conditions, requiring the circulation $\int_\Omega \omega$ to be fixed (\Cref{sec:periodic_1}).
Adding $\delta(\omega, \chi)$ to the left-hand side of \eqref{eq:semidiscrete_2d_a}, and subtracting $\delta(\bfu, \nabla^\perp \chi)$ from the right removes this requirement by the same argument.
In 2D, however, the constraint is a single scalar condition and is therefore less frequently the dominant concern.

\section{General boundary conditions}\label{sec:bcs}

So far we have only considered free slip boundary conditions \eqref{eq:slip_1} on convex polytopal domains.
This section concerns other general homogeneous and inhomogeneous boundary conditions commonly considered for the Navier--Stokes equations, and curved boundaries which break the equivalence between the zero tangential stress condition $\bft \cdot \varepsilon \, \bfu \cdot \bfn = 0$ \eqref{eq:slip_1} and the condition $\bfomega \times \bfn = \bfzero$ \eqref{eq:slip_2}.
The exactness of the underlying Stokes complex is essential for the proofs of discrete energy and enstrophy stability \eqref{eq:discrete_stability};
boundary conditions must be carefully enforced in such a way as not to impact this.


The general idea is to enforce natural boundary conditions weakly, and essential boundary conditions through Lagrange multipliers;
this latter approach is necessary so as not to lose the energy and enstrophy structures \eqref{eq:discrete_stability}.
We work from the variational problem identified in \Cref{sec:scheme} \eqref{eq:boundary_terms} featuring the boundary components.
In the vorticity constraint \eqref{eq:boundary_terms_a}, we further substitute $\curl\bfu$ for the vorticity $\bfomega$ in the boundary term $\langle \bfomega \cdot \bfn, \beta \rangle$:
\begin{equation}
    - \, \langle \bfomega \cdot \bfn, \beta \rangle
        \mapsto  - \, \langle \curl\bfu \cdot \bfn, \beta \rangle
        =  \langle \bfu \times \bfn, \nabla\beta \rangle,
\end{equation}
where in the second equality we integrate by parts over the boundary\footnote{
    This keeps the boundary contribution conforming even under the penalty method of \Cref{sec:penalty}.
}.

We partition the boundary then into three disjoint regions $\partial\Omega = \Gamma_D \cup \Gamma_S \cup \Gamma_N$.
We denote by $\langle \cdot, \cdot \rangle_D$ the restriction of the boundary pairing $\langle \cdot, \cdot \rangle$ to $\Gamma_D$, and similarly for $\langle \cdot, \cdot \rangle_S$ and $\langle \cdot, \cdot \rangle_N$.
The $L^2$ norms therein are denoted $\|\cdot\|_D$, $\|\cdot\|_S$ and $\|\cdot\|_N$.
On these three regions we consider Dirichlet, partial/free slip, and traction boundary conditions, respectively, as we now detail.


\vspace{3mm}

On $\Gamma_D$, we consider the Dirichlet boundary condition
\begin{equation}\label{eq:dirichlet}
    \bfu = \bfu_{\mathrm{BC}}
\end{equation}
for some prescribed $\bfu_{\mathrm{BC}}$.

On $\Gamma_S$, let $\bfu_{||} = \bfu - (\bfu \cdot \bfn) \bfn$ (satisfying $\bfu_{||} \cdot \bfn = 0$) denote the tangential component of $\bfu$.
We consider the general partial slip boundary condition
\begin{equation}\label{eq:partial}
    \bfu \cdot \bfn = u_{\perp,\mathrm{BC}},  \qquad
    \bft \cdot \varepsilon\,\bfu \cdot \bfn = \gamma \bft \cdot (\bfu_{||,\mathrm{BC}} - \bfu_{||}),
\end{equation}
where $\gamma \ge 0$ is a friction parameter, for a prescribed tangential velocity $\bfu_{||,\mathrm{BC}}$ satisfying $\bfu_{||,\mathrm{BC}} \cdot \bfn = 0$, and a prescribed normal velocity $u_{\perp,\mathrm{BC}}$ (both often zero).
When $\gamma = 0$ and $u_{\perp,\mathrm{BC}} = 0$, this reduces to the free slip condition \eqref{eq:slip_1}.

Lastly, on $\Gamma_N$ we consider traction boundary conditions,
\begin{equation}\label{eq:traction}
    \frac{2}{\Re}\varepsilon\,\bfu \cdot \bfn - p \bfn = - \, p_{\mathrm{BC}} \bfn,
\end{equation}
the natural Neumann boundary condition, appropriate at outlets where the flow is fully developed.
Here $p_{\mathrm{BC}}$ is a prescribed reference pressure, potentially non-uniform over $\Gamma_N$.

\vspace{3mm}

It is straightforward to enforce the tangential--normal stress condition on $\Gamma_S$ and the traction boundary condition on $\Gamma_N$ weakly, modifying the boundary terms in the momentum equation \eqref{eq:boundary_terms_c} to the following:
\begin{subequations}
\begin{align}
    (\partial_t \bfu, \bfv)  &=  \Big[ \, (\bfu \times \bfomega, \bfv) + \Big(\frac{1}{2}|\bfu|^2 + p, \div \bfv\Big) - \frac{2}{\Re}(\varepsilon\,\bfu, \varepsilon\,\bfv) \, \Big]  \notag \\
        &\qquad\quad + \Big[ - \Big\langle \frac{1}{2}|\bfu|^2, \bfv \cdot \bfn \Big\rangle + \Big\langle \frac{2}{\Re}\varepsilon\,\bfu \cdot \bfn - p \bfn, \bfv \Big\rangle_D  \notag  \\
        &\qquad\qquad\qquad\qquad + \Big\langle \frac{2\gamma}{\Re}(\bfu_{||,\mathrm{BC}} - \bfu_{||}) + \Big(\bfn \cdot \frac{2}{\Re}\varepsilon\,\bfu \cdot \bfn - p\Big)\,\bfn, \bfv \Big\rangle_S \! - \langle p_{\mathrm{BC}} \bfn, \bfv \rangle_N \, \Big].  \label{eq:boundary_terms_weak_c}  \\
\intertext{
With a little manipulation, similar modifications can be made in the vorticity equation \eqref{eq:boundary_terms_b}:
}
    (\curl \bfomega, \curl \bfchi)  &=  \Big[ \, 2 (\varepsilon \, \bfu, \varepsilon \curl \bfchi) - (\nabla \alpha, \bfchi) \, \Big]  \notag  \\
        &\qquad + \Re \, \Big[ - \langle p \bfn, \curl \bfchi \rangle - \Big\langle \frac{2}{\Re}\varepsilon\,\bfu \cdot \bfn - p \bfn, \curl \bfchi \Big\rangle_D  \notag  \\
        &\qquad\qquad\quad - \Big\langle \frac{2\gamma}{\Re}(\bfu_{||,\mathrm{BC}} - \bfu_{||}) + \Big(\bfn \cdot \frac{2}{\Re}\varepsilon\,\bfu \cdot \bfn - p\Big)\,\bfn, \curl \bfchi \Big\rangle_S \! + \langle p_{\mathrm{BC}} \bfn, \curl \bfchi \rangle_N \, \Big].  \label{eq:boundary_terms_weak_b}
\end{align}
\end{subequations}

Were we now to proceed with imposing the remaining velocity data on $\Gamma_D$ and $\Gamma_S$ strongly, this would obstruct the exactness arguments underlying \Cref{th:stability};
strongly constraining $\bfv$ on the boundary would preclude the choice $\bfv = \curl\bfomega$ used in the proof of enstrophy stability.
Instead we impose these boundary conditions in a variational sense.
Let $\bbU_D$ denote the vector-valued trace of $\bbU$ on $\Gamma_D$, and $\bbU_S$ the normal trace of $\bbU$ on $\Gamma_S$.
We require that
\begin{subequations}
\begin{align}
    \langle \bfu_{\mathrm{BC}}, \bfrho \rangle_D  &=  \langle \bfu, \bfrho \rangle_D,  \\
    \langle u_{\perp,\mathrm{BC}}, r \rangle_S  &=  \langle \bfu \cdot \bfn, r \rangle_S,
\end{align}
\end{subequations}
for all $(\bfrho, r) \in \bbU_D \times \bbU_S$.

To keep the number of equations and unknowns the same, we must introduce Lagrange multipliers $(\bfsigma, s) \in \bbU_D \times \bbU_S$ in their respective spaces into the momentum equation \eqref{eq:boundary_terms_weak_c}:
\begin{subequations}
\begin{align}
    (\partial_t \bfu, \bfv)  &=  \Big[ \, (\bfu \times \bfomega, \bfv) + \Big(\frac{1}{2}|\bfu|^2 + p, \div \bfv\Big) - \frac{2}{\Re}(\varepsilon\,\bfu, \varepsilon\,\bfv) \, \Big]  \notag \\
        &\qquad\quad + \Big[ - \Big\langle \frac{1}{2}|\bfu|^2, \bfv \cdot \bfn \Big\rangle + \langle \bfsigma, \bfv \rangle_D + \Big\langle \frac{2\gamma}{\Re}(\bfu_{||,\mathrm{BC}} - \bfu_{||}) + s\bfn, \bfv \Big\rangle_S \! - \langle p_{\mathrm{BC}} \bfn, \bfv \rangle_N \, \Big].  \\
\intertext{
Comparing like terms on the boundary with \eqref{eq:boundary_terms_weak_c}, we see that the Lagrange multipliers $\bfsigma$ and $s$ represent discrete approximations, respectively, to the traction $\frac{2}{\Re}\varepsilon\,\bfu \cdot \bfn - p \bfn$ on $\Gamma_D$, as $\langle \bfsigma, \bfv \rangle_D$ stands in for $\langle \tfrac{2}{\Re}\varepsilon\,\bfu \cdot \bfn - p \bfn, \bfv \rangle_D$, and the normal traction $\bfn \cdot \frac{2}{\Re}\varepsilon\,\bfu \cdot \bfn - p$ on $\Gamma_S$, as $\langle s\bfn, \bfv \rangle_S$ stands in for $\langle (\bfn \cdot \frac{2}{\Re}\varepsilon\,\bfu \cdot \bfn - p)\bfn, \bfv \rangle_S$.
Accordingly, we substitute in $\bfsigma$ and $s$ in the analogous places in the vorticity equation \eqref{eq:boundary_terms_weak_b}:
}
    (\curl \bfomega, \curl \bfchi)  &=  \Big[ \, 2 (\varepsilon \, \bfu, \varepsilon \curl \bfchi) - (\nabla \alpha, \bfchi) \, \Big]  \notag  \\
        &\qquad + \Re \, \Big[ - \langle p \bfn, \curl \bfchi \rangle - \langle \bfsigma, \curl \bfchi \rangle_D  \notag  \\
        &\qquad\qquad\qquad\qquad\qquad\qquad - \Big\langle \frac{2\gamma}{\Re}(\bfu_{||,\mathrm{BC}} - \bfu_{||}) + s\bfn, \curl \bfchi \Big\rangle_S \! + \langle p_{\mathrm{BC}} \bfn, \curl \bfchi \rangle_N \, \Big],
\end{align}
\end{subequations}
Together, this gives the following semidiscretisation, a modification of \eqref{eq:semidiscrete_3d}.

\begin{definition}[3D semidiscretisation (general boundary conditions)]
    Find $(\alpha, \bfomega, \bfu, p, \bfsigma, s) \in \bbA \times \bbW \times \bbU \times \bbP \times \bbU_D \times \bbU_S$ such that
    \begin{subequations}\label{eq:semidiscrete_3d_bcs}
    \begin{align}
        0  &=  (\bfomega, \nabla \beta) + \langle \bfu \times \bfn, \nabla\beta \rangle,  \label{eq:semidiscrete_3d_bcs_a}  \\
        (\curl \bfomega, \curl \bfchi)  &=  \Big[ \, 2 (\varepsilon \, \bfu, \varepsilon \curl \bfchi) - (\nabla \alpha, \bfchi) \, \Big]  \notag  \\
            &\qquad + \Re \, \Big[ - \langle p \bfn, \curl \bfchi \rangle - \langle \bfsigma, \curl \bfchi \rangle_D  \notag  \\
            &\qquad\qquad\qquad\qquad\qquad\qquad - \Big\langle \frac{2\gamma}{\Re}(\bfu_{||,\mathrm{BC}} - \bfu_{||}) + s \bfn, \curl \bfchi \Big\rangle_S + \langle p_{\mathrm{BC}} \bfn, \curl \bfchi \rangle_N \, \Big],  \label{eq:semidiscrete_3d_bcs_b}  \\
        (\partial_t \bfu, \bfv)  &=  \Big[ \, (\bfu \times \bfomega, \bfv) + \left(\frac{1}{2}|\bfu|^2 + p, \div \bfv\right) - \frac{2}{\Re}(\varepsilon\,\bfu, \varepsilon\,\bfv) \, \Big]  \notag \\
            &\qquad\quad + \Big[ - \left\langle \frac{1}{2}|\bfu|^2, \bfv \cdot \bfn \right\rangle + \langle \bfsigma, \bfv \rangle_D + \Big\langle \frac{2\gamma}{\Re}(\bfu_{||,\mathrm{BC}} - \bfu_{||}) + s \bfn, \bfv \Big\rangle_S \! - \langle p_{\mathrm{BC}} \bfn, \bfv \rangle_N \, \Big],  \label{eq:semidiscrete_3d_bcs_c}  \\
        0  &=  (\div \bfu, q),  \\
        \langle \bfu_{\mathrm{BC}}, \bfrho \rangle_D  &=  \langle \bfu, \bfrho \rangle_D,  \label{eq:semidiscrete_3d_bcs_e}  \\
        \langle u_{\perp,\mathrm{BC}}, r \rangle_S  &=  \langle \bfu \cdot \bfn, r \rangle_S,  \label{eq:semidiscrete_3d_bcs_f}
    \end{align}
    \end{subequations}
    for all $(\beta, \bfchi, \bfv, q, \bfrho, r) \in \bbA \times \bbW \times \bbU \times \bbP \times \bbU_D \times \bbU_S$.
\end{definition}
As for the convex polytopal free-slip case \eqref{eq:semidiscrete_2d}, the 2D discretisation is largely similar:
(i)~the vorticity $\omega$ is a scalar, with (ii)~no Lagrange multiplier $\alpha$.
Recalling that the 2D analogue of $\curl$ acting on the vorticity is $\omega \mapsto - \nabla^\perp\omega$, the boundary contributions in the vorticity equation appear with the opposite sign.

\begin{definition}[2D semidiscretisation (general boundary conditions)]
    Find $(\omega, \bfu, p, \bfsigma, s) \in \bbW \times \bbU \times \bbP \times \bbU_D \times \bbU_S$ such that
    \begin{subequations}\label{eq:semidiscrete_2d_bcs}
    \begin{align}
        (\nabla \omega, \nabla \chi)  &=  - \, 2 (\varepsilon \, \bfu, \varepsilon \, \nabla^\perp \chi)  \notag  \\
            &\qquad + \Re \, \Big[ \, \langle p \bfn, \nabla^\perp \chi \rangle + \langle \bfsigma, \nabla^\perp \chi \rangle_D + \Big\langle \frac{2\gamma}{\Re}(\bfu_{||,\mathrm{BC}} - \bfu_{||}) + s \bfn, \nabla^\perp \chi \Big\rangle_S \! - \langle p_{\mathrm{BC}} \bfn, \nabla^\perp \chi \rangle_N \, \Big],  \label{eq:semidiscrete_2d_bcs_a}  \\
        (\partial_t \bfu, \bfv)  &=  \Big[ - \, (\omega \bfu^\perp, \bfv) + \left(\frac{1}{2}|\bfu|^2 + p, \div \bfv\right) - \frac{2}{\Re}(\varepsilon\,\bfu, \varepsilon\,\bfv) \, \Big]  \notag \\
            &\qquad\qquad + \Big[ - \left\langle \frac{1}{2}|\bfu|^2, \bfv \cdot \bfn \right\rangle + \langle \bfsigma, \bfv \rangle_D + \Big\langle \frac{2\gamma}{\Re}(\bfu_{||,\mathrm{BC}} - \bfu_{||}) + s \bfn, \bfv \Big\rangle_S \! - \langle p_{\mathrm{BC}} \bfn, \bfv \rangle_N \, \Big],  \label{eq:semidiscrete_2d_bcs_b}  \\
        0  &=  (\div \bfu, q),  \\
        \langle \bfu_{\mathrm{BC}}, \bfrho \rangle_D  &=  \langle \bfu, \bfrho \rangle_D,  \label{eq:semidiscrete_2d_bcs_d}  \\
        \langle u_{\perp,\mathrm{BC}}, r \rangle_S  &=  \langle \bfu \cdot \bfn, r \rangle_S,  \label{eq:semidiscrete_2d_bcs_e}
    \end{align}
    \end{subequations}
    for all $(\chi, \bfv, q, \bfrho, r) \in \bbW \times \bbU \times \bbP \times \bbU_D \times \bbU_S$.
\end{definition}

Note that nothing obliges us to treat the whole boundary the same way.
As in the introductory setting of \Cref{sec:scheme}, where the boundary is a flat, free slip wall, the conditions $\bfu\cdot\bfn = 0$ and $\bfomega\times\bfn = \bfzero$ may still be imposed strongly, as in \eqrefs{eq:semidiscrete_3d,eq:semidiscrete_2d};
only the remaining portion need carry the multipliers of \eqrefs{eq:semidiscrete_3d_bcs,eq:semidiscrete_2d_bcs}.
We do exactly this for the flow past an obstacle in \Cref{sec:vortex_street}, where the channel walls are free slip but the obstacle is not.

\subsection{Energy \& enstrophy stability}

Before considering the discrete preservation of energy and enstrophy, we note that the partial slip boundary conditions necessitate a new definition for the enstrophy,
\begin{equation}\label{eq:slip_enstrophy}
    \calE(\bfu)
        \coloneqq  \|\varepsilon\,\bfu\|^2 + \gamma \|\bfu_{||} - \bfu_{||,\mathrm{BC}}\|_S^2,
\end{equation}
where $\|\cdot\|_S$ denotes the $L^2$ norm on the slip surface $\Gamma_S$; the second term accounts for the drag forces.

Under these general boundary conditions, the continuous evolution laws \eqrefs{eq:energy,eq:enstrophy} acquire corresponding boundary contributions from the energy and enstrophy fluxes into and out of the domain.
On the continuous level, testing the momentum equation \eqref{eq:navier-stokes_1} against $\bfu$ yields the energy identity:
\begin{subequations}\label{eq:energy_bcs}
\begin{align}
    \partial_t \calK
        &=  - \, \frac{2}{\Re}\|\varepsilon\,\bfu\|^2 + \left[ - \oint_{\partial\Omega} \frac{1}{2}|\bfu|^2\bfu\cdot\bfn + \oint_{\partial\Omega} \Big(\frac{2}{\Re}\varepsilon\,\bfu \cdot \bfn - p \bfn\Big)\cdot\bfu \right]  \\
        &=  - \, \frac{2}{\Re}\calE(\bfu) + \left[ - \oint_{\partial\Omega} \frac{1}{2}|\bfu|^2\bfu\cdot\bfn + \int_{\Gamma_D} \Big(\frac{2}{\Re}\varepsilon\,\bfu \cdot \bfn - p \bfn\Big)\cdot\bfu_{\mathrm{BC}}\right.  \notag  \\
        &\qquad\qquad\qquad\qquad \left. + \int_{\Gamma_S} \Big[ \frac{2\gamma}{\Re}(\bfu_{||,\mathrm{BC}} - \bfu_{||})\cdot\bfu_{||,\mathrm{BC}} + \Big(\bfn \cdot \frac{2}{\Re}\varepsilon\,\bfu \cdot \bfn - p\Big) u_{\perp,\mathrm{BC}} \Big] - \int_{\Gamma_N} p_{\mathrm{BC}}\bfu\cdot\bfn \right]\!.
\end{align}
\end{subequations}
Assuming the boundary data $\bfu_{\mathrm{BC}}$, $\bfu_{||,\mathrm{BC}}$, $u_{\perp,\mathrm{BC}}$ on $\bfu$ are all constant in time, testing against $\Delta\bfu$ yields the enstrophy identity:
\begin{align}\label{eq:enstrophy_bcs}
    \partial_t \calE
        &=  \int_\Omega \bfu\cdot(\bfomega\times\curl\bfomega) - \frac{1}{\Re}\|\Delta\bfu\|^2 + \Re \int_{\Gamma_N} (p - p_{\mathrm{BC}})\,\dot{\bfu}\cdot\bfn + \oint_{\partial\Omega} \Big( \frac{1}{2}|\bfu|^2 + p \Big) \, \Delta\bfu \cdot \bfn,
\end{align}
where again this is the modified enstrophy \eqref{eq:slip_enstrophy}.

\begin{theorem}[Energy \& enstrophy stability (general boundary conditions)]\label{th:stability_bcs}
    Assuming the boundary data $\bfu_{\mathrm{BC}}$, $\bfu_{||,\mathrm{BC}}$, $u_{\perp,\mathrm{BC}}$ on $\bfu$ are all constant in time, the Navier--Stokes semidiscretisations in 3D \eqref{eq:semidiscrete_3d_bcs} and 2D \eqref{eq:semidiscrete_2d_bcs} with boundary conditions preserve discrete forms of the energy \eqref{eq:energy_bcs} and enstrophy \eqref{eq:enstrophy_bcs} laws:
    \begin{subequations}\label{eq:discrete_stability_bcs}
    \begin{align}
        \partial_t \calK
            &=  - \, \frac{2}{\Re}\calE(\bfu) + \left[ - \oint_{\partial\Omega} \frac{1}{2}|\bfu|^2\bfu\cdot\bfn + \int_{\Gamma_D} \bfsigma\cdot\bfu_{\mathrm{BC}}\right.  \notag  \\
            &\qquad\qquad\qquad\qquad\qquad\qquad\qquad \left.+ \int_{\Gamma_S} \Big[ \frac{2\gamma}{\Re}(\bfu_{||,\mathrm{BC}} - \bfu_{||})\cdot\bfu_{||,\mathrm{BC}} + s u_{\perp,\mathrm{BC}} \Big] - \int_{\Gamma_N} p_{\mathrm{BC}}\bfu\cdot\bfn \right]\!,  \label{eq:energy_discrete_bcs}  \\
        \partial_t \calE
            &=  \int_\Omega \bfu\cdot(\bfomega\times\curl\bfomega) - \frac{1}{\Re}\|\curl\bfomega\|^2 + \Re \int_{\Gamma_N} (p - p_{\mathrm{BC}})\,\dot{\bfu}\cdot\bfn - \oint_{\partial\Omega} \Big( \frac{1}{2}|\bfu|^2 + p \Big) \, \curl\bfomega \cdot \bfn.  \label{eq:enstrophy_discrete_bcs}
    \end{align}
    \end{subequations}
    Again in 2D the advective term $\int_\Omega \bfu \cdot (\bfomega \times \curl\bfomega)$ in the discrete enstrophy law \eqref{eq:enstrophy_discrete_bcs} vanishes.
\end{theorem}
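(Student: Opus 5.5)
We follow the structure of the proof of \Cref{th:stability}, working with the 3D scheme \eqref{eq:semidiscrete_3d_bcs}; the 2D case differs only in signs. The two laws come from testing with the same functions as before: $q = \div\bfu$, then $\bfv = \bfu$ for energy and $\bfv = \curl\bfomega$ for enstrophy. The difference is that $\bfu$ is no longer strongly constrained, so no test-function restriction obstructs these choices. The price is that every boundary pairing in \eqref{eq:semidiscrete_3d_bcs_c} now survives and must be tracked.

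\textbf{Energy.}
\begin{itemize}
\item Taking $q = \div\bfu$ gives $\div\bfu = 0$ pointwise, by the (unconstrained) complex property \eqref{eq:stokes_3d}.
\item Taking $\bfv = \bfu$ in \eqref{eq:semidiscrete_3d_bcs_c} kills the advective term $(\bfu\times\bfomega,\bfu)$ and the pressure term, and leaves the interior dissipation $-\tfrac{2}{\Re}\|\varepsilon\,\bfu\|^2$.
\item On $\Gamma_D$, we replace $\langle\bfsigma,\bfu\rangle_D$ by $\langle\bfsigma,\bfu_{\mathrm{BC}}\rangle_D$ using \eqref{eq:semidiscrete_3d_bcs_e} with $\bfrho = \bfsigma \in \bbU_D$.
\item On $\Gamma_S$, we replace $\langle s, \bfu\cdot\bfn\rangle_S$ by $\langle s, u_{\perp,\mathrm{BC}}\rangle_S$ using \eqref{eq:semidiscrete_3d_bcs_f} with $r = s$.
\item On $\Gamma_S$ we also split $\bfu_{||} = (\bfu_{||} - \bfu_{||,\mathrm{BC}}) + \bfu_{||,\mathrm{BC}}$ in the friction pairing. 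The first piece produces $-\tfrac{2\gamma}{\Re}\|\bfu_{||}-\bfu_{||,\mathrm{BC}}\|_S^2$, which combines with the interior dissipation into $-\tfrac{2}{\Re}\calE(\bfu)$ for the modified enstrophy \eqref{eq:slip_enstrophy}. The second piece is the stated $\Gamma_S$ flux term.
\end{itemize}
This gives \eqref{eq:energy_discrete_bcs}.

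\textbf{Enstrophy, step one.} We first extract what \eqref{eq:semidiscrete_3d_bcs_a}--\eqref{eq:semidiscrete_3d_bcs_b} say when tested against curl-free and curl-type fields.
\begin{itemize}
\item Taking $\bfchi = \nabla\alpha$ in \eqref{eq:semidiscrete_3d_bcs_b} gives $\nabla\alpha = \bfzero$, since $\curl\nabla\alpha = \bfzero$ removes every boundary pairing.
\item Since $\bfu$ is divergence-free, we write $\bfu = \curl\bfpsi$ with $\bfpsi \in \bbW$ (contractibility assumed, as before), so that $\partial_t\bfu = \curl\partial_t\bfpsi$.
\item Taking $\bfchi = \partial_t\bfpsi$ in \eqref{eq:semidiscrete_3d_bcs_b} gives $(\curl\bfomega,\partial_t\bfu)$ as $2(\varepsilon\,\bfu,\varepsilon\,\partial_t\bfu)$ plus $\Re$ times the boundary pairings evaluated at $\partial_t\bfu$.
\item Because the boundary data are constant in time, differentiating \eqref{eq:semidiscrete_3d_bcs_e}--\eqref{eq:semidiscrete_3d_bcs_f} gives $\partial_t\bfu = \bfzero$ on $\Gamma_D$ and $\partial_t\bfu\cdot\bfn = 0$ on $\Gamma_S$ (weakly, in the trace spaces). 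Hence the $\bfsigma$- and $s$-pairings vanish, and the $\langle p\bfn,\cdot\rangle$ pairing survives only on $\Gamma_N$.
\item The friction pairing becomes $\tfrac{2\gamma}{\Re}\langle \bfu_{||}-\bfu_{||,\mathrm{BC}}, \partial_t\bfu_{||}\rangle_S$. Multiplied by $\Re$, it is exactly the time derivative of the drag part of $\calE$.
\item Altogether we obtain $(\curl\bfomega,\partial_t\bfu) = \partial_t\calE - \Re\int_{\Gamma_N}(p-p_{\mathrm{BC}})\,\partial_t\bfu\cdot\bfn$.
\end{itemize}

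\textbf{Enstrophy, step two.}
\begin{itemize}
\item Taking $\bfchi = \bfomega$ in \eqref{eq:semidiscrete_3d_bcs_b} expresses $\|\curl\bfomega\|^2$ as $2(\varepsilon\,\bfu,\varepsilon\curl\bfomega)$ plus $\Re$ times all boundary pairings evaluated at $\curl\bfomega$.
\item Taking $\bfv = \curl\bfomega$ in \eqref{eq:semidiscrete_3d_bcs_c} gives $(\partial_t\bfu,\curl\bfomega)$. Since $\div\curl\bfomega = 0$, the pressure and kinetic terms leave only the boundary term $-\langle\tfrac12|\bfu|^2,\curl\bfomega\cdot\bfn\rangle$. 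The remaining terms are the advective term, $-\tfrac{2}{\Re}(\varepsilon\,\bfu,\varepsilon\curl\bfomega)$, and the same $\bfsigma$, $s$, friction and $p_{\mathrm{BC}}$ pairings as in step one, now evaluated at $\curl\bfomega$ and with opposite sign relative to the vorticity equation.
\item Substituting the first bullet into the second, scaled by $1/\Re$, the $\bfsigma$, friction, $s\bfn$ and $p_{\mathrm{BC}}$ pairings cancel identically. This is precisely why the vorticity equation was built with these multiplier terms scaled by $\Re$ and sign-reversed.
\item What remains is the advective term, $-\tfrac{1}{\Re}\|\curl\bfomega\|^2$, the $-\oint p\,\curl\bfomega\cdot\bfn$ term from the vorticity equation, and the $-\oint\tfrac12|\bfu|^2\curl\bfomega\cdot\bfn$ term from the momentum equation.
\item Combining with step one gives \eqref{eq:enstrophy_discrete_bcs}.
\end{itemize}
In 2D the advective integrand $\bfu\cdot(\bfomega\times\curl\bfomega)$ is pointwise $\omega\,\bfu^\perp\cdot\nabla^\perp\omega$-type and vanishes identically, exactly as in \Cref{th:stability}.

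\textbf{Main obstacle.} The delicate step is the bookkeeping of boundary pairings in step one. We must verify that the $\Re$-scaled boundary terms of \eqref{eq:semidiscrete_3d_bcs_b} tested with $\partial_t\bfpsi$ reduce exactly to $\partial_t$ of the drag term plus the $\Gamma_N$ flux. This needs the time-differentiated multiplier constraints, and it needs the trace of $\partial_t\bfu$ to lie in spaces against which \eqref{eq:semidiscrete_3d_bcs_e}--\eqref{eq:semidiscrete_3d_bcs_f} are tested. The plan is to justify this by noting that $\bfsigma \in \bbU_D$ and $s \in \bbU_S$ are themselves traces of $\bbU$, so the constraints imply vanishing pairings of $\partial_t\bfu$ against them. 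Sign conventions between the 3D and 2D curls also need checking at this step.
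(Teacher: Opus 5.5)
Your derivation of \eqref{eq:energy_discrete_bcs} and \eqref{eq:enstrophy_discrete_bcs} is correct and follows the paper's route. The paper's proof only says to take $\bfrho=\bfsigma$, $r=s$ and otherwise reuse the test functions and cancellations of \Cref{th:stability}. Your bookkeeping (the time-differentiated constraints in step one, and the cancellation of the $\Re$-scaled, sign-reversed multiplier terms in step two) is exactly what that sentence leaves implicit. One small repair: to drop $\langle p\bfn,\partial_t\bfu\rangle$ on $\Gamma_D\cup\Gamma_S$, it is not enough that $\partial_t\bfu$ pairs to zero with $\bfsigma$ and $s$, because the trace of $p$ is not in the test spaces. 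You need the traces of $\partial_t\bfu$ to vanish outright. Get this by testing the differentiated constraints with $\bfrho=\partial_t\bfu|_{\Gamma_D}\in\bbU_D$ and $r=\partial_t\bfu\cdot\bfn|_{\Gamma_S}\in\bbU_S$.

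The genuine error is your final claim about 2D: the advective integrand does not vanish pointwise. With $\bfomega=\omega\bfe_3$ one has $\bfomega\times\curl\bfomega=\omega\nabla\omega$. Equivalently, testing the 2D scheme with $\bfv=-\nabla^\perp\omega$ gives $(\omega\bfu^\perp,\nabla^\perp\omega)$, and $\bfu^\perp\cdot\nabla^\perp\omega=\bfu\cdot\nabla\omega$. Using $\div\bfu=0$, the term therefore equals
$\tfrac12\int_\Omega\bfu\cdot\nabla(\omega^2)=\tfrac12\oint_{\partial\Omega}\omega^2\,\bfu\cdot\bfn$.
In \Cref{th:stability} this vanishes only because $\bfu\cdot\bfn=0$ is imposed strongly; that integration by parts, not a pointwise identity, is the real argument there. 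Under the general boundary conditions, the constraints only tie $\bfu\cdot\bfn$ to (a projection of) the data on $\Gamma_D\cup\Gamma_S$, and leave it free on $\Gamma_N$. So the term vanishes only under an extra no-penetration hypothesis: $\Gamma_N=\emptyset$, $u_{\perp,\mathrm{BC}}=0$ and $\bfu_{\mathrm{BC}}\cdot\bfn=0$, as in the paper's 2D experiments. With inflow or outflow it is a genuine boundary flux of $\tfrac12\omega^2$. The paper's one-line proof does not address this point either. You should therefore either add that hypothesis or keep the term as $\tfrac12\oint_{\partial\Omega}\omega^2\,\bfu\cdot\bfn$, and in both cases replace the pointwise claim by this integration by parts.
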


\begin{proof}
    After considering $\bfrho = \bfsigma$ in \eqref{eq:semidiscrete_3d_bcs_e} and $r = s$ in \eqref{eq:semidiscrete_3d_bcs_f}, the proof here is largely identical to that of \Cref{th:stability}, considering the same test functions, using the same exactness properties, with many of the same cancellations.
\end{proof}

\begin{remark}[Enstrophy flux on the boundary]\label{rem:enstrophy_flux}
    The flux $\oint_{\partial\Omega} (\tfrac{1}{2}|\bfu|^2 + p)\,\curl\bfomega\cdot\bfn$ appearing in \eqrefs{eq:enstrophy_bcs,eq:enstrophy_discrete_bcs} in general has unknown sign, implying that under general boundary conditions\footnote{
        Integration by parts over the boundary in the continuous case gives
        \begin{equation}
            \oint_{\partial\Omega} \Big(\frac{1}{2}|\bfu|^2 + p\Big)\,\curl\bfomega\cdot\bfn
                = - \oint_{\partial\Omega} \nabla \Big(\frac{1}{2}|\bfu|^2 + p\Big) \cdot (\bfomega\times\bfn).
        \end{equation}
        In the polytopal free slip case of \Cref{sec:scheme}, $\bfomega\times\bfn = \bfzero$ \eqref{eq:slip_2}, justifying why the term is absent from \Cref{th:stability}, and why it appears here.
    } the enstrophy is not necessarily dissipated, even in 2D and even with $\Gamma_N = \emptyset$.
    We stress that this is not an artefact of the discretisation: the same term appears in the continuous identity \eqref{eq:enstrophy_bcs}, and our proposed scheme \eqrefs{eq:semidiscrete_3d_bcs,eq:semidiscrete_2d_bcs} reproduces it faithfully.
    The discretisation still preserves the \emph{evolution} of the enstrophy, rather than a monotone dissipation law, which we find nevertheless to have stabilising effects on our numerical results (see \Cref{sec:simulations} below).
\end{remark}
\begin{remark}[Local structures]\label{rem:local}
    The machinery of this section localises in a natural way.
    By restricting to subdomains within $\Omega$, local versions of the energy \eqref{eq:energy} and enstrophy \eqref{eq:enstrophy_both} structures may be observed, dictating how energy and enstrophy are transported throughout the domain via their respective fluxes.
    These local structures are often more important for the qualitative behaviour of solutions than their global counterparts, as they prohibit, for example, vortices from instantly transferring to a different position in the domain, a behaviour that would violate the local enstrophy structure but not the global one.
    Local, cellwise versions of the stability results \eqref{eq:discrete_stability} in \Cref{th:stability}, \eqref{eq:enstrophy_stretching} in \Cref{th:stretching}, and \eqref{eq:discrete_stability_bcs} in \Cref{th:stability_bcs} can be proven for the proposed semidiscretisations by ``breaking'' the test spaces, enforcing the required continuity on cell interfaces through Lagrange multipliers as would be done in hybridisation or domain decomposition \citep{Berchenko-Kogan_Stern_2021a,Berchenko-Kogan_Stern_2021b}.
    Practically, these Lagrange multipliers play an identical role to those introduced above in \eqrefs{eq:semidiscrete_3d_bcs,eq:semidiscrete_2d_bcs}, approximating traction forces on the facets, and giving local cellwise energy and enstrophy stability similar to \eqref{eq:discrete_stability_bcs} in \Cref{th:stability_bcs}.
    These preserved discrete local structures aid in accurately modelling the motion of vortices, e.g.~in the vortex shedding demonstrations in \Cref{sec:vortex_street} below.
\end{remark}

\subsection{Periodic boundary conditions \& non-trivial topologies}\label{sec:periodic_2}


As established in \Cref{sec:periodic_1}, periodic boundaries imply non-trivial topologies, and non-trivial topologies give rise to non-trivial harmonic forms.
This causes the Stokes complex \eqrefs{eq:stokes_3d_bc,eq:stokes_2d_bc} to lose exactness.

For the 3D scheme \eqref{eq:semidiscrete_3d}, recall that we found the harmonic 0-, 2- and 3-forms to be relatively benign.
However, the presence of harmonic 1-forms caused the subsystem \eqrefs{eq:semidiscrete_3d_a,eq:semidiscrete_3d_b} defining the discrete vorticity $\bfomega$ to be under-determined, impacting the evolution of the velocity $\bfu$ through the presence of $\bfomega$ in the advective term $(\bfu\times\bfomega, \bfv)$ in \eqref{eq:semidiscrete_3d_c}.
This was resolved through a Lagrange multiplier $\bflambda \in \frakH^1_0$, fixing the harmonic component of $\bfomega$ to be zero by enforcing $(\bfomega, \bfmu) = 0$ for all $\bfmu \in \frakH^1_0$.
For the 2D scheme \eqref{eq:semidiscrete_2d}, harmonic 1- and 2-forms were similarly found to be relatively benign:
we were able to restore well-posedness for the discrete vorticity $\omega$ in the presence of harmonic 0-forms by enforcing $\int_\Omega \omega = 0$.

In large part, everything discussed in \Cref{sec:periodic_1} carries over to the general boundary condition discretisation.
In particular, the majority of the work required is in resolving a nullspace in the definition of $\bfomega$.
However, since our domains of consideration need no longer necessarily be convex, non-trivial topologies may arise even without periodic boundary conditions, when corresponding topological defects (e.g.~holes, tunnels, voids) are present.
Notably also, the relevant harmonic spaces change:
boundary data are no longer imposed strongly, with the general boundary condition scheme \eqrefs{eq:semidiscrete_3d_bcs,eq:semidiscrete_2d_bcs} using the \emph{unconstrained} Stokes complex \eqrefs{eq:stokes_3d,eq:stokes_2d}.
The relevant harmonic spaces are $\frakH^k$ (as opposed to $\frakH^k_0$), reversing the Betti numbers $b_k$ \eqref{eq:betti}.
A topological defect that troubled one variable in \eqrefs{eq:semidiscrete_3d,eq:semidiscrete_2d} before may now trouble another in \eqrefs{eq:semidiscrete_3d_bcs,eq:semidiscrete_2d_bcs}, while one that was harmless may no longer be so.

\paragraph{The Lagrange multiplier $\alpha$ (0-forms in 3D).}
With no boundary conditions enforced strongly, the harmonic nullspace $\frakH^0$ of constant functions will always leave $\alpha$ ill-defined.
This can be resolved as above by either (i)~adding the term $(\alpha, \beta)$ to \eqref{eq:semidiscrete_3d_bcs_a}, or (ii)~reporting the constant nullspace to the solver.

\paragraph{The vorticity $\bfomega$ (1-forms in 3D, 0-forms in 2D).}
In 3D, a new linearly independent harmonic 1-form in $\frakH^1$ is introduced for each tunnel through the domain (i.e.~$b_1$ quantifies the number of tunnels).
Similar to the zero-boundary-condition harmonic 1-forms $\frakH^1_0$ considered in \Cref{sec:periodic_1}, these cause an issue for the well-definedness of $\bfomega$ in \eqref{eq:semidiscrete_3d_bcs} which requires resolution.
Consider first a general curl-free function $\bfmu \in \bfH(\curl)$.
Applying integration by parts over $\Omega$,
\begin{equation}\label{eq:harmonic_boundary}
    (\curl\bfu, \bfmu)
        =  \langle \bfu, \bfmu \times \bfn \rangle + (\bfu, \curl\bfmu)
        =  \langle \bfu, \bfmu \times \bfn \rangle.
\end{equation}
For $\bfmu \in \frakH^1_0$, we see $(\curl\bfu, \bfmu) = 0$, as $\bfmu \times \bfn = \bfzero$ on the boundary $\partial\Omega$.
In \Cref{sec:periodic_1}, we observed that the subsystem \eqrefs{eq:semidiscrete_3d_a,eq:semidiscrete_3d_b} defining the discrete vorticity $\bfomega$ in 3D had a nullspace corresponding to the harmonic 1-forms $\frakH^1_0 \subset \bbW_0$ with zero boundary conditions.
Consistent with $(\curl\bfu, \bfmu) = 0$, well-posedness was restored by enforcing the harmonic component of $\bfomega$ to be zero via a Lagrange multiplier, specifying that $(\bfomega, \bfmu) = 0$ for all $\bfmu \in \frakH^1_0$.

Under more general boundary conditions \eqref{eq:semidiscrete_3d_bcs}, the subsystem \eqrefs{eq:semidiscrete_3d_bcs_a,eq:semidiscrete_3d_bcs_b} defining the discrete vorticity $\bfomega$ has a similar nullspace:
the harmonic 1-forms $\frakH^1 \subset \bbW$ \emph{without} zero boundary conditions.
Inspired by \eqref{eq:harmonic_boundary}, the solution is similar:
fix $(\bfomega, \bfmu) = \langle \bfu, \bfmu \times \bfn \rangle$ for all $\bfmu \in \frakH^1$.
Practically, this requires (i)~introducing a new variable $\bflambda \in \frakH^1$ into \eqref{eq:semidiscrete_3d_bcs}, modifying \eqref{eq:semidiscrete_3d_bcs_b} through a Lagrange multiplier term,
\begin{subequations}\label{eq:vorticity_constraint_bcs}
\begin{equation}
    (\curl \bfomega, \curl \bfchi)  =  \Big[ \, 2 (\varepsilon \, \bfu, \varepsilon \curl \bfchi) - (\nabla \alpha, \bfchi) \, \Big] + \cdots + (\bflambda, \bfchi),
\end{equation}
and (ii)~imposing the constraint
\begin{equation}\label{eq:circulation_3d}
    (\bfomega, \bfmu) =  \langle \bfu, \bfmu \times \bfn \rangle
\end{equation}
\end{subequations}
for all $\bfmu \in \frakH^1$.
As in \Cref{sec:periodic_1}, $\frakH^1$ is of a limited finite dimension ($\dim\frakH^1 = b_1$), implying that a basis for $\frakH^1$ may be computed \emph{a priori}.
Under Dirichlet boundary conditions, where the tangential component of $\bfu$ is prescribed on $\partial\Omega$, the right-hand side of \eqref{eq:circulation_3d} reduces to imposed boundary data;
where only the normal component is prescribed, as on the partial and free slip boundaries, it does not.

\begin{remark}[Harmonic 1-forms and tunnels]\label{rem:tunnels}
    As stated above, each tunnel through the domain can be associated with a discrete harmonic 1-form in $\frakH^1$, a curl-free and discretely divergence-free function flowing in the pipe around that tunnel.
    The constraint \eqref{eq:circulation_3d} imposes (in an appropriate discrete sense) a constraint from Stokes' theorem:
    the vorticity flux through each pipe must equal the total flow of velocity around that pipe's circumference (\Cref{fig:torus}).
\end{remark}

\begin{figure}[pos=!ht]
    \centering
    \begin{tikzpicture}[scale=0.9, >=stealth,
            edge/.style  = {line width=1.1pt, black},
            glass/.style = {fill=black!15, fill opacity=0.62},
            flow/.style  = {seabornblue, line width=1pt, -{Straight Barb[length=1.1mm, width=2.4mm]}},
            circ/.style  = {seabornred, line width=1pt},
            circh/.style = {seabornred, line width=1pt, densely dashed},
            flowp/.style = {seabornblue, line width=1pt},
            cut/.style   = {seabornred, line width=0.5pt}]

        \def\torus#1{plot[variable=\t, domain=0:360, samples=145, smooth cycle]
            ({(2.15 + #1*0.85*cos(atan(1.428148*sin(\t)))) * cos(\t)},
            {(2.15 + #1*0.85*cos(atan(1.428148*sin(\t)))) * sin(\t) * 0.573576
              + #1*0.85*sin(atan(1.428148*sin(\t))) * 0.819152})}

        \def\pipe#1#2{plot[variable=\p, domain=#1:#2, samples=60]
            ({2.15 + 0.85*cos(\p)}, {0.85*sin(\p) * 0.819152})}

        %
        \draw[flow]  (1.7,0)       arc[start angle=  0, end angle= 45, x radius=1.7, y radius=0.9751];
        \draw[flow]  (1.2021, 0.6895)  arc[start angle= 45, end angle=135, x radius=1.7, y radius=0.9751];
        \draw[flowp] (-1.2021, 0.6895) arc[start angle=135, end angle=180, x radius=1.7, y radius=0.9751];
        \draw[flow]  (2.15,0)       arc[start angle=  0, end angle= 45, x radius=2.15, y radius=1.2333];
        \draw[flow]  (1.5203, 0.8721)  arc[start angle= 45, end angle=135, x radius=2.15, y radius=1.2333];
        \draw[flowp] (-1.5203, 0.8721) arc[start angle=135, end angle=180, x radius=2.15, y radius=1.2333];
        \draw[flow]  (2.6,0)       arc[start angle=  0, end angle= 45, x radius=2.6, y radius=1.4913];
        \draw[flow]  (1.8385, 1.0545)  arc[start angle= 45, end angle=135, x radius=2.6, y radius=1.4913];
        \draw[flowp] (-1.8385, 1.0545) arc[start angle=135, end angle=180, x radius=2.6, y radius=1.4913];
        \draw[circh] \pipe{360}{180};

        \begin{scope}
            \clip (2.15,0) ellipse[x radius=0.85, y radius=0.696];
            \fill[seabornred, fill opacity=0.10] (2.15,0) ellipse[x radius=0.85, y radius=0.696];
            \foreach \c in {-3.7,-3.5,...,-0.6} { \draw[cut] (1.0,{1.0+\c}) -- (3.3,{3.3+\c}); }
        \end{scope}

        \draw[flow]  (-1.7,0)      arc[start angle=180, end angle=225, x radius=1.7, y radius=0.9751];
        \draw[flow]  (-1.2021,-0.6895) arc[start angle=225, end angle=315, x radius=1.7, y radius=0.9751];
        \draw[flowp] (1.2021,-0.6895)  arc[start angle=315, end angle=360, x radius=1.7, y radius=0.9751];
        \draw[flow]  (-2.15,0)      arc[start angle=180, end angle=225, x radius=2.15, y radius=1.2333];
        \draw[flow]  (-1.5203,-0.8721) arc[start angle=225, end angle=315, x radius=2.15, y radius=1.2333];
        \draw[flowp] (1.5203,-0.8721)  arc[start angle=315, end angle=360, x radius=2.15, y radius=1.2333];
        \draw[flow]  (-2.6,0)      arc[start angle=180, end angle=225, x radius=2.6, y radius=1.4913];
        \draw[flow]  (-1.8385,-1.0545) arc[start angle=225, end angle=315, x radius=2.6, y radius=1.4913];
        \draw[flowp] (1.8385,-1.0545)  arc[start angle=315, end angle=360, x radius=2.6, y radius=1.4913];

        \fill[glass, even odd rule] \torus{1} \torus{-1};
        \draw[edge] \torus{1};
        \draw[edge] \torus{-1};

        \draw[circ] \pipe{180}{0};
    \end{tikzpicture}
    
    \vspace{2mm}
    
    \caption{
        A solid torus, carrying a non-trivial harmonic 1-form (blue).
        The constraint \eqref{eq:circulation_3d} equates the flux of $\bfomega$ through a cross-section of the pipe with the circulation of $\bfu$ around the boundary of that cross-section (red).
    }
    \label{fig:torus}
\end{figure}

In 2D, $\omega$ is similarly ill-defined up to constant functions in $\frakH^0 \subset \bbW$, present on all domains $\Omega$.
The solution is to fix the circulation (i.e.~the total vorticity)
\begin{equation}\label{eq:vorticity_mean}
    \int_\Omega \omega  =  \oint_{\partial\Omega} \bfu \cdot \bft,
\end{equation}
again through a Lagrange multiplier.
We apply this fix to the Dirichlet Kelvin--Helmholtz test of \Cref{sec:kh}.

\paragraph{The velocity $\bfu$ (2-forms in 3D, 1-forms in 2D).}
As in \Cref{sec:periodic_1}, harmonic 2-forms $\frakH^2$ in 3D and 1-forms $\frakH^1$ in 2D obstruct the existence of the streamfunction $\bfpsi$ used in the proof of \Cref{th:stability_bcs}, but in general do not affect the well-posedness of the discretisation \eqrefs{eq:semidiscrete_3d_bcs,eq:semidiscrete_2d_bcs}.
These harmonic forms can be associated with domain cavities in 3D, and general holes in 2D.

\paragraph{The pressure $p$ (3-forms in 3D, 2-forms in 2D).}
Outside of domains with zero boundary (i.e.~fully periodic domains), $b_n = \dim \frakH^n = 0$.
Nonetheless, in the absence of traction boundary conditions ($\Gamma_N = \emptyset$), the pressure is still ill-determined up to a constant\footnote{
    Specifically, a nullspace can be identified by incrementing $p \mapsto p + p_\rmH$, alongside $\bfsigma \mapsto \bfsigma - p_\rmH\bfn$ and $s \mapsto s - p_\rmH$, for a fixed constant $p_\rmH$.
    As ever, well-posedness requires the total imposed flux $\oint_{\partial\Omega}\bfu\cdot\bfn$ to be zero, consistent with incompressibility $\div\bfu = 0$.
}, as is true for any enclosed flow.
The same solutions as in \Cref{sec:periodic_1} present themselves:
either (i)~normalise $\int_\Omega p = 0$ through a Lagrange multiplier, or (ii)~report it to the solver.

\subsection{Instabilities on curved boundaries \& variational vorticity boundary conditions}\label{sec:strong_vorticity}

While the above variational approach for enforcing general boundary conditions \eqrefs{eq:semidiscrete_3d_bcs,eq:semidiscrete_2d_bcs} preserves the evolution of enstrophy \eqref{eq:enstrophy_discrete_bcs}, one issue presents itself around certain discrete approximations of curved boundaries.
To see this, consider the equivalence between the slip boundary condition formulations \eqref{eq:slip_1} and \eqref{eq:slip_2} presented in \Cref{sec:scheme}.
Following \citet[Sec.~2]{Costabel_Dauge_1999}, we calculate
\begin{subequations}\label{eq:compatibility}
\begin{align}
    \curl\bfu \times \bfn
    &= (\nabla\bfu^\top - \nabla\bfu)\cdot\bfn  \\
    &= (\nabla\bfu^\top + \nabla\bfu)\cdot\bfn
        + 2 \nabla\bfn\cdot\bfu
        - 2 (\nabla\bfu\cdot\bfn + \nabla\bfn\cdot\bfu)  \\
    &= 2\varepsilon\,\bfu \cdot \bfn
        + 2\sfS \cdot \bfu
        - 2\nabla[\bfu \cdot \bfn],
\end{align}
\end{subequations}
where $\sfS \coloneqq \nabla \bfn$ is the shape operator \citep{lee2018}, a symmetric matrix characterising the curvature of the boundary (see \Cref{rem:shape_operator} below).
Over a partial slip portion $\Gamma_S$ of the boundary $\partial\Omega$, the boundary condition \eqref{eq:partial} then implies that for any tangential $\bft$ on $\partial\Omega$,
\begin{equation}\label{eq:partial_compatibility}
    \bft \cdot (\curl\bfu \times \bfn)
        =  \bft \cdot (2\gamma(\bfu_{||,\mathrm{BC}} - \bfu_{||}) + 2 \sfS \cdot \bfu - 2 \nabla u_{\perp,\mathrm{BC}}).
\end{equation}
For free-slip ($\gamma = 0$), flat ($\sfS = 0$), zero-flux ($u_{\perp,\mathrm{BC}} = 0$) boundaries as considered in \Cref{sec:scheme}, these right-hand side terms each vanish, giving the equivalence between \eqref{eq:slip_1} and \eqref{eq:slip_2}.
At corners in the domain, however, the shape operator $\sfS$ becomes singular.
For \emph{convex} corners, Navier--Stokes solutions $\bfu$ with zero-flux, slip boundary conditions are parallel to the edge, implying the curvature term $2 \sfS \cdot \bfu$ vanishes at the corner, preserving the equivalence between \eqref{eq:slip_1} and \eqref{eq:slip_2}.
For \emph{concave} (re-entrant) corners, the velocity gradient $\nabla\bfu$ (and with it the vorticity $\curl\bfu$) at such corners blows up and the velocity $\bfu$ becomes ill-defined, rendering $2 \sfS \cdot \bfu$ ill-defined and breaking the equivalence.

In practice, to avoid the use of curved elements it is often reasonable to approximate curved obstacles and walls via piecewise linear approximations (see \Cref{fig:curved_approx}).
This is particularly true for the schemes presented here, as it is difficult to construct discrete Stokes complexes \eqrefs{eq:stokes_3d,eq:stokes_2d} that remain well-defined on curved elements.
However, there is reasonable basis in \eqref{eq:partial_compatibility} for expecting the curvature singularities caused by such approximations to lead to unphysical vorticities $\bfomega$.
Indeed, we observe this numerically, as the quality of the vorticity approximation under slip boundary conditions degrades around such linear approximations to curved boundaries, leading to instabilities in the discrete solution.

\begin{figure}[pos=!ht]
    \centering
    \begin{subfigure}{0.48\textwidth}
        \centering
        \begin{tikzpicture}[scale=1.5, baseline=(current bounding box.center), >=stealth,
                edge/.style  = {line width=1.1pt, black},
                lite/.style  = {line width=1.1pt, black!40},
                hatch/.style = {black!55, line width=0.5pt},
                nrm/.style   = {seabornblue, line width=0.9pt, -{Straight Barb[length=1.1mm, width=2.4mm]}}]
            \fill[black!15] (0,0) -- (180:1.9) arc[start angle=180, end angle=90, radius=1.9] -- cycle;
            \foreach \a in {97,102,...,177} {\draw[hatch] (\a:1.9) -- ++({\a+225}:0.18);}
            \foreach \a in {95,105,...,175} {\draw[nrm] (\a:1.9) -- ++({\a}:0.55);}
            \draw[edge] (180:1.9) arc[start angle=180, end angle=90, radius=1.9];
        \end{tikzpicture}
        \caption{Curved boundary}\label{fig:curved_exact}
    \end{subfigure}
    \begin{subfigure}{0.48\textwidth}
        \centering
        \begin{tikzpicture}[scale=1.5, baseline=(current bounding box.center), >=stealth,
                edge/.style   = {line width=1.1pt, black},
                lite/.style   = {line width=1.1pt, black!40},
                ghost/.style  = {line width=0.6pt, black!35, densely dashed},
                hatch/.style  = {black!55, line width=0.5pt},
                nrm/.style    = {seabornblue, line width=0.9pt, -{Straight Barb[length=1.1mm, width=2.4mm]}},
                corner/.style = {seabornred}]
            \fill[black!15] (0,0) -- (90:1.9) -- (120:1.9) -- (150:1.9) -- (180:1.9) -- cycle;
            \draw[ghost] (180:1.9) arc[start angle=180, end angle=90, radius=1.9];
            \foreach \k in {0,1,2} {
                \pgfmathsetmacro{\th}{105+30*\k}
                \foreach \s in {-0.31,-0.13,0.05,0.23,0.41} {
                    \pgfmathsetmacro{\px}{1.83526*cos(\th) + \s*cos(\th+90)}
                    \pgfmathsetmacro{\py}{1.83526*sin(\th) + \s*sin(\th+90)}
                    \draw[hatch] (\px,\py) -- ++({\th+225}:0.18);
                }
                \foreach \s in {-0.30,0,0.30} {
                    \pgfmathsetmacro{\qx}{1.83526*cos(\th) + \s*cos(\th+90)}
                    \pgfmathsetmacro{\qy}{1.83526*sin(\th) + \s*sin(\th+90)}
                    \draw[nrm] (\qx,\qy) -- ++({\th}:0.55);
                }
            }
            \draw[edge] (90:1.9) -- (120:1.9) -- (150:1.9) -- (180:1.9);
            \foreach \a in {120,150} {\fill[corner] (\a:1.9) circle (0.05);}
        \end{tikzpicture}
        \caption{Piecewise linear approximation}\label{fig:curved_approx_linear}
    \end{subfigure}
    \caption{
        A curved obstacle and its piecewise linear approximation, the fluid occupying the exterior, with outward normals $\bfn$ marked (blue).
        On the curved boundary $\bfn$ rotates continuously and the shape operator $\sfS = \nabla\bfn$ is bounded.
        On the approximation $\bfn$ is constant along each facet, so $\sfS$ vanishes there and concentrates instead at the vertices (red), each of which is a re-entrant corner of the fluid domain.
    }\label{fig:curved_approx}
\end{figure}

A natural solution is to enforce a boundary condition on the vorticity $\bfomega$ strongly.
The general compatibility condition \eqref{eq:partial_compatibility} represents a linear relation between $\bfomega$ and the velocity $\bfu$ that can be enforced variationally.
Letting $\bbW_S$ denote the tangential trace of $\bbW$ on $\Gamma_S$, we (i)~introduce a new variable $\bfzeta \in \bbW_S$ into \eqref{eq:semidiscrete_3d_bcs}, modifying the vorticity equation \eqref{eq:semidiscrete_3d_bcs_b} through the introduction of a Lagrange multiplier term
\begin{subequations}\label{eq:strong_vorticity_bc}
\begin{equation}\label{eq:strong_vorticity_bc_a}
    (\curl \bfomega, \curl \bfchi)  =  \Big[ \, 2 (\varepsilon \, \bfu, \varepsilon \curl \bfchi) - (\nabla \alpha, \bfchi) \, \Big] + \cdots + \langle \bfzeta, \bfchi \rangle_S,
\end{equation}
and (ii)~enforce the relation
\begin{equation}\label{eq:strong_vorticity_bc_b}
    \langle \bfomega, \bfeta \rangle_S
        =  2\gamma \langle \bfu_{||,\mathrm{BC}} - \bfu_{||}, \bfeta \times \bfn \rangle_S + 2 \langle \sfS \cdot \bfu, \bfeta \times \bfn \rangle_S - 2 \langle \nabla u_{\perp,\mathrm{BC}}, \bfeta \times \bfn \rangle_S
\end{equation}
\end{subequations}
to hold for all $\bfeta \in \bbW_S$, approximating \eqref{eq:partial_compatibility}.
The modification to \eqref{eq:semidiscrete_2d_bcs} in 2D is similar.
Since this represents a strong boundary condition for $\bfomega$ on $\Gamma_S$, we similarly impose the strong boundary condition $\alpha = 0$ on $\Gamma_S$.
This restriction on the complex will impact the relevant harmonic forms that might cause the vorticity subsystem \eqrefs{eq:semidiscrete_3d_bcs_a,eq:semidiscrete_3d_bcs_b} to be ill-defined, and the constraint \eqref{eq:vorticity_constraint_bcs} must be modified accordingly.
To ensure that \eqref{eq:strong_vorticity_bc} enforces physically reasonable boundary conditions on $\bfomega$, the curvature $\sfS$ used in \eqref{eq:strong_vorticity_bc_b} should be that of the desired \emph{continuous} boundary, and not that of the \emph{discrete} boundary mesh approximating it (which will typically be different as above). 

\begin{remark}[Re-entrant corners]
    The construction in \eqref{eq:strong_vorticity_bc} is ill-posed at \emph{concave} (re-entrant) corners.
    We note, however, that this is largely consistent with the continuous theory \citep{Grisvard_2011}:
    such domains imply singularities in the vorticity $\bfomega$, as the curvature term $2 \sfS \cdot \bfu$ in \eqref{eq:partial_compatibility} becomes ill-defined.
    It is unclear what discrete boundary conditions would be appropriate for a variable which, in the continuous setting, is singular on the boundary.
    \emph{Convex} corners in the continuous domain, on the other hand, require no special treatment and can simply be ignored in \eqref{eq:strong_vorticity_bc_b}.
\end{remark}

This modification weakens the enstrophy stability result \eqref{eq:enstrophy_discrete_bcs} of \Cref{th:stability_bcs} through the introduction of boundary terms linearly dependent on the Lagrange multiplier $\bfzeta$.
However, from \eqref{eq:strong_vorticity_bc_a} we see $\bfzeta$ represents a discrete approximation to $\bfzero$, fixing errors and instabilities on $\Gamma_S$:
if the vorticity approximation on $\Gamma_S$ from \eqrefs{eq:semidiscrete_3d_bcs,eq:semidiscrete_2d_bcs} remains reasonable, $\bfzeta$ should remain relatively small.
Indeed in the flat, free-slip, zero-flux case, we see the variationally imposed boundary conditions reduce exactly to those considered in \Cref{sec:scheme}, and $\bfzeta = \bfzero$ exactly.


\begin{remark}[Shape operator]\label{rem:shape_operator}
    Also referred to as the \emph{Weingarten map}, the shape operator $\sfS$ can be written in terms of the principal directions $(\bfnu_i)_{i=1}^{n-1}$ and principal curvatures $(\kappa_i)_{i=1}^{n-1}$ on $\partial\Omega$ as
    \begin{equation}
        \sfS = \sum_{i=1}^{n-1}\kappa_i\bfnu_i \otimes \bfnu_i.
    \end{equation}
    For instance, around a ball-shaped cavity of radius $r$, $\sfS \cdot \bfu$ is $- 1/r$ times the projection of $\bfu$ onto the surface of the ball.
    For further works utilising the compatibility condition \eqref{eq:compatibility} to handle boundary conditions for the Stokes and Navier--Stokes equations, see \citet{Mitrea_Monniaux_2009}, \citet{Costa_et_al_2023} and \citet{Boon_et_al_2026}.
\end{remark}

\section{Numerical simulations}\label{sec:simulations}

We conclude with various numerical experiments investigating the enstrophy stability properties of our scheme \eqrefs{eq:semidiscrete_3d,eq:semidiscrete_2d}, testing also the validity of the penalty method \eqrefs{eq:semidiscrete_3d_penalty,eq:semidiscrete_2d_penalty}, reparametrisation \eqrefs{eq:charlie_3d,eq:charlie_2d}, and MEEVC-inspired stabilisation \eqref{eq:semidiscrete_3d_delta} proposed in \Cref{sec:implementation}.
All simulations are performed in \texttt{Firedrake} \citep{Ham_et_al_2023};
the code used to produce them is available at \citet{Andrews_2026_GitHub}.

In \Cref{sec:kh}, we simulate a simple 2D shear flow.
In \Cref{sec:hill_vortex}, we consider a 3D spherical vortex to study the effects of vortex stretching and its discrete preservation.
In \Cref{sec:vortex_street}, we simulate, both in 2D and in 3D, the turbulent wake behind cylindrical and spherical obstacles.
\Cref{tab:experiments} summarises these experiments, alongside the boundary conditions considered, discrete complex used, and other implementation requirements and techniques applied in each case.

\begin{table}[pos=!ht]
    \centering
    \small
    \begin{tabular}{@{}lllll@{}}
        \toprule
        Test & Dim. & Boundary conditions & Discrete complex & Implementation notes \\
        \midrule
        \Cref{sec:kh}
            & 2
            & Periodic
            & Scott--Vogelius \eqref{eq:sv_2d}
            & $H^2$-free \eqref{eq:charlie_2d} \\
            (Shear flow)
            &
            & Free slip (strong) \eqref{eq:semidiscrete_2d}
            & 
            & Harmonic constraints \\
            &
            & Dirichlet (weak) \eqref{eq:semidiscrete_2d_bcs}
            &
            & $\qquad\qquad\qquad$(\Cref{sec:periodic_2}) \\
        \addlinespace
        \Cref{sec:hill_vortex}
            & 3
            & Periodic
            & Scott--Vogelius \eqref{eq:sv_3d}
            & $\bfH(\grad\curl)$-free \eqref{eq:charlie_3d} \\
            (Hill vortex)
            &
            &
            & 
            & Harmonic constraints \\
            &
            &
            & 
            & $\;\qquad$(\Cref{sec:periodic_1,sec:periodic_2}) \\
        \addlinespace
        \Cref{sec:vortex_street_2d}
            & 2
            & Periodic
            & Polynomial de Rham \eqref{eq:fedr_poly}
            & Penalty method \eqref{eq:semidiscrete_2d_penalty} \\
            (Cylindrical obstacle)
            &
            & Free slip (strong)
            & \\
            &
            & Partial slip (variational) \eqref{eq:strong_vorticity_bc}
            &
            & \\
        \addlinespace
        \Cref{sec:vortex_street_3d}
            & 3
            & Periodic
            & Hybrid de Rham \eqref{eq:fedr_hybrid}
            & Penalty method \eqref{eq:semidiscrete_3d_penalty} \\
            (Spherical obstacle)
            &
            & Free slip (strong)
            &
            & Reduced $\bar{\sigma}$ (\Cref{sec:penalty_sigma}) \\
            &
            & Partial slip (variational)
            &
            & $\delta$ stabilisation \eqref{eq:semidiscrete_3d_delta} \\
            &
            &
            &
            & Adaptive timestepping \\
        \bottomrule
    \end{tabular}
    \caption{
        Summary of the numerical experiments considered throughout \Cref{sec:simulations}.
    }
    \label{tab:experiments}
\end{table}

\begin{remark}[Validity of 3D results]\label{rem:3d_results}
    Establishing whether the results seen from our proposed integrator in the 3D tests below (\Cref{sec:hill_vortex,sec:vortex_street_3d}) represent accurate reproductions of the vortex stretching dynamics \eqref{eq:enstrophy_stretching_continuous} (rather than an artefact of, e.g., the spatial resolution) lies beyond what these experiments can settle.
    Doing so would require initial data and forcing whose behaviour is known independently (such as the recent blow-up scenario of \citet{OpenAI_2026b}) and a higher resolution than the coarse spatial discretisations considered.
    This we leave to future work.
\end{remark}

\subsection{Shear flow \& Kelvin--Helmholtz instability}\label{sec:kh}

We consider the Kelvin--Helmholtz instability on a unit square domain $\Omega = (0, 1)^2$ with periodic boundary conditions in $x$ only.
Up to divergence-free projection, the initial conditions model a smoothed shear layer with a small asymmetric perturbation,
\begin{equation}
    \bfu(0) = \begin{pmatrix}
        \tanh(10y - 5)  \\
        10^{-5}y\,(1-y)\,(\sin(2 \pi x) + \sin(4 \pi x) - \cos(4 \pi x))
    \end{pmatrix}.
\end{equation}
On the top ($y = 1$) and bottom ($y = 0$) walls we consider both free slip and Dirichlet boundary conditions in turn.
For the free slip discretisation, the boundary data are imposed strongly as in \Cref{sec:scheme} \eqref{eq:semidiscrete_2d};
for the Dirichlet discretisation, the boundary data are imposed variationally as in \Cref{sec:bcs} \eqref{eq:semidiscrete_2d_bcs}.
In this latter case, we note that the constant harmonic 0-form leaves the vorticity $\omega$ ill-defined, and requires fixing through a Lagrange multiplier \eqref{eq:vorticity_mean} enforcing
\begin{equation}
    \int_\Omega \omega = \oint_{\partial\Omega} \bfu \cdot \bft = - 2\tanh 5.
\end{equation}

Over a uniform triangular mesh of diameter $0.02$, we consider the Scott--Vogelius complex over an Alfeld-split mesh \eqref{eq:sv_2d}.
We use the $H^2$-free reparametrisation \eqref{eq:charlie_2d} presented in \Cref{sec:charlie} to access our scheme without requiring the implementation of $\bbH\bbC\bbT_3$\footnote{
    The Hsieh--Clough--Tocher element is well supported in {\tt Firedrake} \citep{Brubeck_Kirby_2026}.
    We use the $H^2$-free reparametrisation \eqref{eq:charlie_2d} primarily to validate its efficacy.
};
this employs the lower regularity polynomial de Rham complex \eqref{eq:fedr_poly} with $k = 2$ over the same Alfeld-split mesh, observing that $\bbU = [\bbC\bbG_2^\rmA]^2 \subset \bbB\bbD\bbM_2^\rmA$ as required, to evaluate $\omega$.
The reparametrisation \eqref{eq:charlie_2d} thus introduces the auxiliary variables $(\bfeta, R) \in \bbU_0 \times \bbP = [\bbC\bbG^\rmA_2]^2 \times \bbD\bbG^\rmA_1$, solving instead for $\omega \in \hat{\bbW}_0 = \bbC\bbG^\rmA_3$ such that $\grad^\perp \omega = \bfeta \in \bbU_0$, implicitly guaranteeing $\omega \in \bbW_0 = \bbH\bbC\bbT_3$.

We use a relatively large Reynolds number $\Re = 10^6$, and compare with a classical unstabilised discretisation where simply $\omega = - \div \bfu^\perp$\footnote{
    The MEEVC scheme is not considered for this test, as it is most naturally associated with the de Rham complex and not the Stokes complex used in the reparametrisation.
    The same is true of the vortex test case in \Cref{sec:hill_vortex}.
}.
We discretise both schemes in time using the implicit midpoint rule, over a timestep $\Delta t = 0.1$, running until a final time $t = 10$.

With variables for the velocity $\bfu$ and pressure $p$, the unstabilised scheme uses around 105k DoFs.
Due to the $H^2$-free reparametrisation \eqref{eq:charlie_2d}, our proposed scheme uses five fields in total, leading to a total of around 278k DoFs, approximately 164\% more.
Note, however, that the scheme would have required in total only around 120k DoFs, approximately 14\% more than the unstabilised scheme, had we used our proposed scheme without the reparametrisation (see \Cref{rem:charlie_overhead}).

\paragraph{Results.}

\Cref{fig:kh_2d_data} shows the evolution of enstrophy $\calE(\bfu)$ (\Cref{fig:kh_2d_enstrophy}) and energy $\calK(\bfu)$ (\Cref{fig:kh_2d_energy}) in the simulations.
\Cref{fig:kh_2d_enstrophy} uses a broken $y$-axis, the upper axis ranging over $[4, 500]$ and the lower over $[6.51, 6.69]$.
We see from the lower axis that the enstrophy in our discretisation \eqref{eq:charlie_2d} decreases monotonically across the duration in both settings, gradually at first, then more quickly after the onset of the instability around $t = 5$, to a final value around 2.2\% below its initial value under free slip and 1.7\% below under Dirichlet conditions.
The upper axis shows that the enstrophy in the unstabilised scheme \eqref{eq:unstabilised_semidiscrete} attains a minimum value at around $t = 2.6$ for free slip and $t = 1.9$ for Dirichlet, before climbing rapidly to final values around 3680\% and 5320\% above that of the initial state, respectively.
In \Cref{fig:kh_2d_energy}, we see the manifestation of this behaviour in the evolution of the energy, through the preserved energy dissipation law $\partial_t \calK = - \tfrac{2}{\Re}\calE$ \eqref{eq:energy_discrete}:
as the enstrophy balloons in the classical discretisation, the energy begins to decay more rapidly;
as the enstrophy remains bounded in the enstrophy-stable discretisation, the energy maintains a gradual rate of decay.

\begin{figure}[pos=!ht]
    \centering
    \pgfplotsset{
        common axis style/.style={
            xmin = 0, xmax = 10, xtick distance = 1,
            width = 0.9\textwidth, height = 0.2\textwidth,
            axis on top,
            scale only axis,
            extra y ticks = {0},
            extra y tick labels = {},
            extra y tick style = {grid = major,
                grid style = {black, dotted, line width = 0.5pt}},
        }
    }
    \begin{subfigure}{0.95\textwidth}
        \centering
        \begin{tikzpicture}[trim axis left]
            \begin{axis}[
                common axis style,
                xticklabels = {},
                ymode = log,
                ymin = 4, ymax = 500, ytick = {1, 10, 100, 1000},
                log ticks with fixed point,
                name = top_axis,
                legend pos = north west,
            ]
                \input{plots/kelvin_helmholtz/enstrophy/asf_slip.tex}
                \addlegendentry{our proposed scheme \eqref{eq:charlie_2d}, free slip}
                \input{plots/kelvin_helmholtz/enstrophy/asf_dirichlet.tex}
                \addlegendentry{our proposed scheme \eqref{eq:charlie_2d}, Dirichlet}
                \input{plots/kelvin_helmholtz/enstrophy/standard_slip.tex}
                \addlegendentry{unstabilised \eqref{eq:unstabilised_semidiscrete}, free slip}
                \input{plots/kelvin_helmholtz/enstrophy/standard_dirichlet.tex}
                \addlegendentry{unstabilised \eqref{eq:unstabilised_semidiscrete}, Dirichlet}
            \end{axis}

            \begin{axis}[
                common axis style,
                ymin = 6.51, ymax = 6.69, ytick distance = 0.05, restrict y to domain* = 6.4:6.8,
                at = {(top_axis.below south west)},
                anchor = north west,
                yshift = 1mm,
                xlabel = {$t$},
                ylabel = {$\calE(\bfu)$},
                ylabel style = {xshift = 1.65cm},
            ]
                \input{plots/kelvin_helmholtz/enstrophy/asf_slip.tex}
                \input{plots/kelvin_helmholtz/enstrophy/asf_dirichlet.tex}
                \input{plots/kelvin_helmholtz/enstrophy/standard_slip.tex}
                \input{plots/kelvin_helmholtz/enstrophy/standard_dirichlet.tex}
            \end{axis}
        \end{tikzpicture}
        \caption{Enstrophy}
        \label{fig:kh_2d_enstrophy}
    \end{subfigure}\\
    \vspace{3mm}
    \begin{subfigure}{0.95\textwidth}
        \centering
        \begin{tikzpicture}[trim axis left]
        \begin{axis}[
            common axis style,
            height = 0.25\textwidth,
            xlabel = {$t$},
            ymin = 0.3989, ymax = 0.4001, ytick distance = 0.0002,
            ylabel = {$\calK(\bfu)$},
            yticklabel style={/pgf/number format/fixed, /pgf/number format/precision=4, /pgf/number format/fixed zerofill},
        ]
            \input{plots/kelvin_helmholtz/energy/asf_slip.tex}
            \input{plots/kelvin_helmholtz/energy/asf_dirichlet.tex}
            \input{plots/kelvin_helmholtz/energy/standard_slip.tex}
            \input{plots/kelvin_helmholtz/energy/standard_dirichlet.tex}
        \end{axis}
        \end{tikzpicture}
        \caption{Energy}
        \label{fig:kh_2d_energy}
    \end{subfigure}\\

    \caption{Evolution of enstrophy and energy in both simulations of the Kelvin--Helmholtz instability.}
    \label{fig:kh_2d_data}
\end{figure}

\Cref{fig:kh_2d_plots} illustrates velocity profiles for both discretisations under the free slip conditions at the final time $t = 10$.
The numerical results from our proposed discretisation \eqref{eq:charlie_2d} demonstrate the expected Kelvin--Helmholtz vortices.
Large instabilities visible in the unstabilised discretisation \eqref{eq:unstabilised_semidiscrete} are associated with the large, uncontrolled enstrophy.

\begin{figure}[pos=!ht]
    \centering
    \begin{subfigure}{0.48\textwidth}
        \centering
        \includegraphics[width=0.8\textwidth]{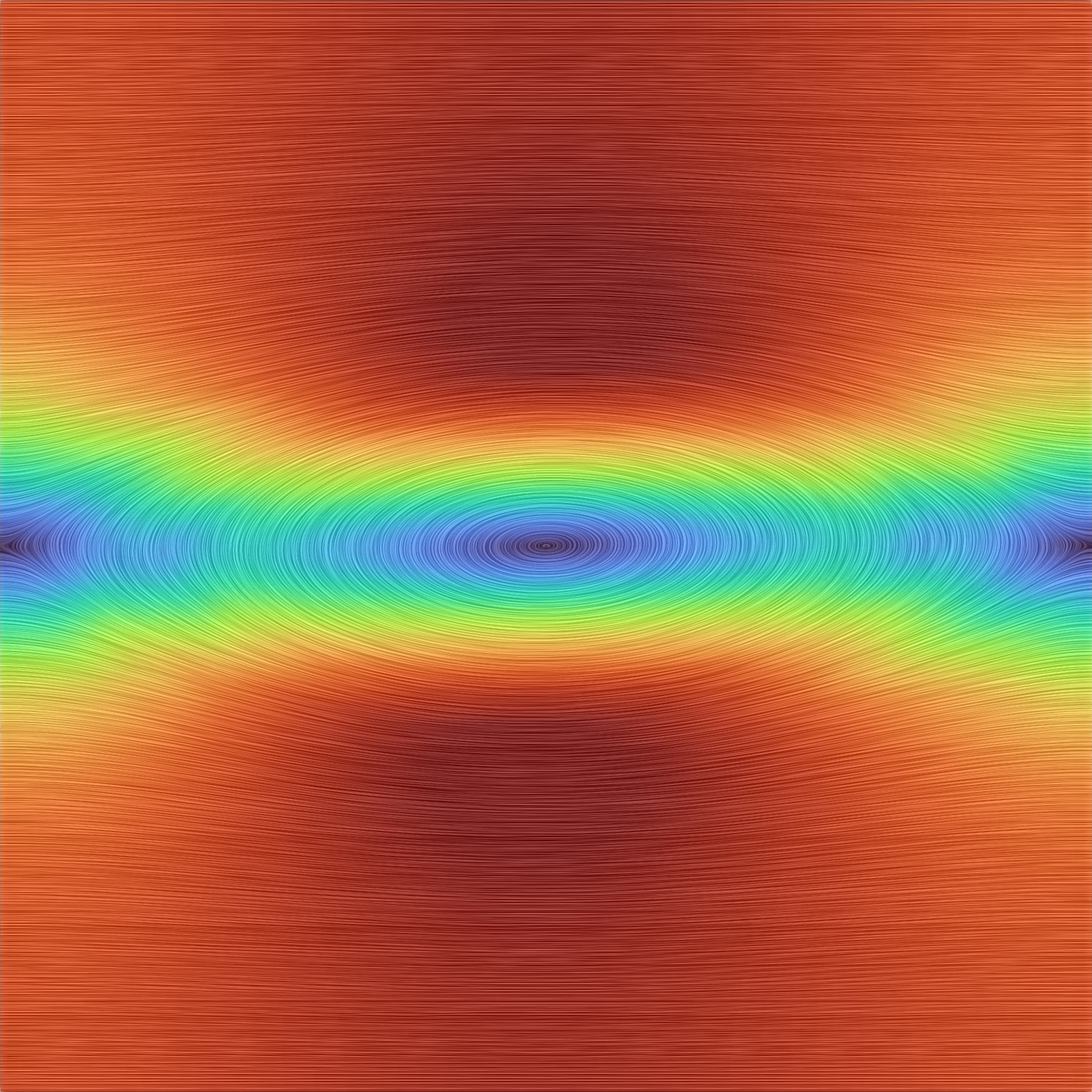}
        \caption{Our proposed scheme \eqref{eq:charlie_2d}}
    \end{subfigure}
    \begin{subfigure}{0.48\textwidth}
        \centering
        \includegraphics[width=0.8\textwidth]{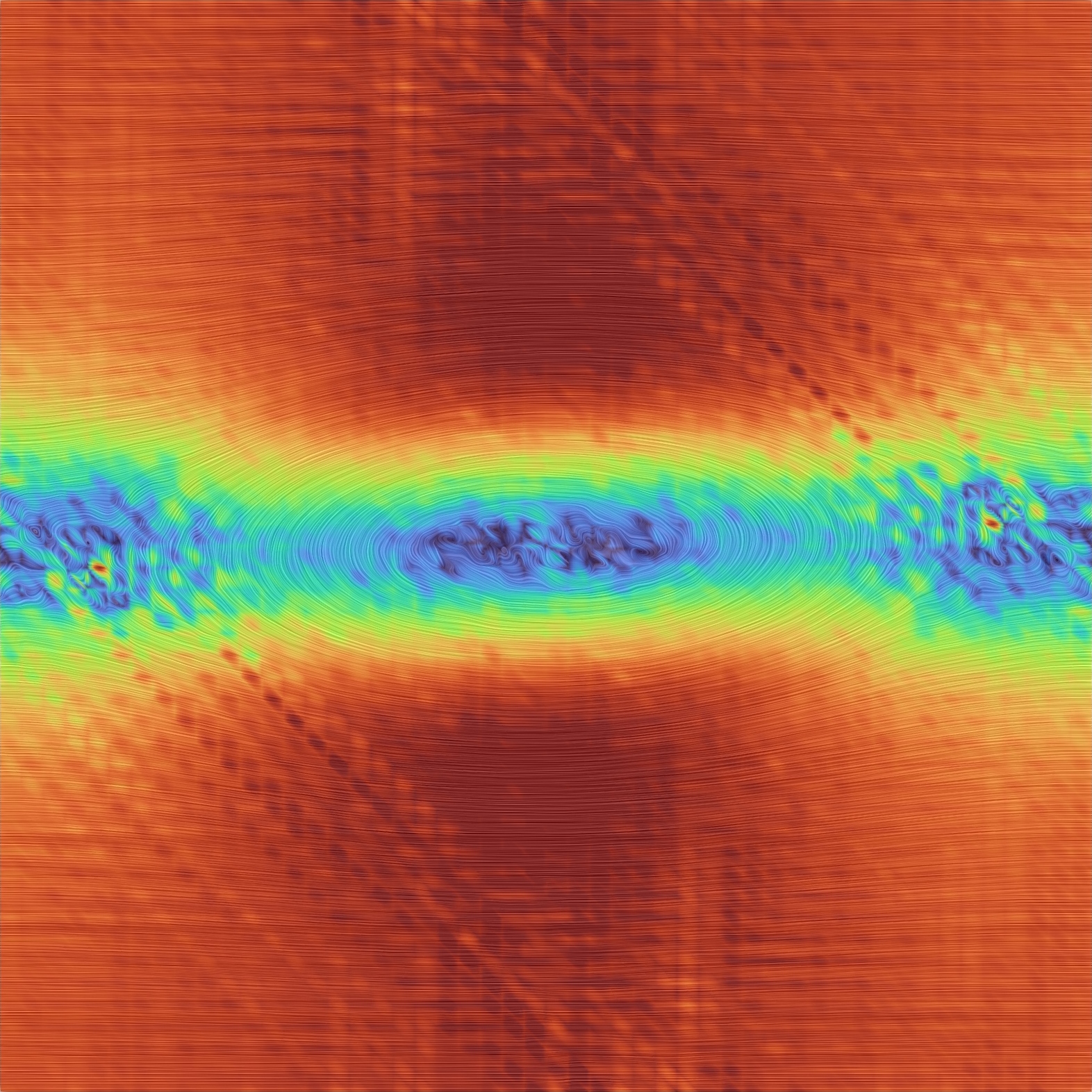}
        \caption{Unstabilised \eqref{eq:unstabilised_semidiscrete}}
    \end{subfigure}
    \caption{Final velocity profiles at time $t=10$ for both simulations of the Kelvin--Helmholtz instability, coloured from blue ($|\bfu| = 0$) to red ($|\bfu| = 1.2$) and textured to enhance streamlines.}
    \label{fig:kh_2d_plots}
\end{figure}

\subsection{Hill spherical vortex}\label{sec:hill_vortex}

We consider now a 3D flow with tightly wound vortex field lines, where we expect the advective contribution $\int_\Omega \bfu \cdot (\curl \bfu \times \curl^2 \bfu)$ to the enstrophy evolution \eqref{eq:enstrophy} to play a more significant role.
We take as our test problem the Hill spherical vortex \citep{Hill_1894,Moffatt_1969}, an exact steady-state solution to the incompressible Euler equations describing a vortex ring confined to a sphere, of radius $0.25$.
This may be defined in spherical coordinates $(r, \theta, \varphi)$ from the Stokes stream function
\begin{subequations}\label{eq:hill}
\begin{equation}
    \psi  =  \begin{cases}
        \left(\frac{J_{3/2}(4\eta r)}{(4r)^{3/2}} - J_{3/2}(\eta)\right)(r\sin\theta)^2,  &  r \le 0.25  \\
        0,  &  r \ge 0.25
    \end{cases},
\end{equation}
where $J_\alpha$ denotes the Bessel function of the first kind of order $\alpha$, and $\eta$ the first positive root of $J_{5/2}$, around $5.76$.
Up to projection, and normalisation to initial energy $\calK(\bfu(0)) = 1$, the initial conditions are given in terms of $\psi$ as
\begin{equation}
    \bfu(0)
        = \frac{1}{r^2 \sin\theta} \frac{\partial\psi}{\partial\theta} \bfe_r
        - \frac{1}{r \sin\theta} \frac{\partial\psi}{\partial r} \bfe_\theta
        + \frac{4 \eta}{r \sin\theta} \psi \bfe_\varphi.
\end{equation}
\end{subequations}
This classical solution features strong vortex field line curvature, and therefore provides an appropriate test for the behaviour of our scheme in a closed system where enstrophy is not simply dissipated.

We choose as our domain $\Omega = (-0.5, 0.5)^3$ with periodic boundary conditions.
As proposed in \Cref{sec:periodic_2}, we handle the uniform discrete harmonic 1-forms in $\frakH^1$ via Lagrange multipliers, and the harmonic 0- and 3-forms in $\frakH^0$ and $\frakH^3$ by providing them to the nonlinear solver.
We use a timestep of $0.001$, running to a final time $t = 0.5$.

The spatial discretisation is done via the $\bfH(\grad\curl)$-free reparametrisation \eqref{eq:charlie_3d}.
For the velocity $\bfu \in \bbU_0$ and the pressure $p \in \bbP$, we employ the Alfeld-split Scott--Vogelius pair \eqref{eq:sv_3d} at $k = 3$, recalling from \Cref{sec:charlie} that, for this reparametrisation, the preceding $\bfH^1$- and $\bfH(\grad\curl)$-conforming spaces in the discrete Stokes complex are not directly relevant to us.
Instead we evaluate $\alpha$ and the discrete vorticity $\bfomega$ through the lower regularity polynomial de Rham complex \eqref{eq:fedr_poly} at $k = 3$ over the same Alfeld-split mesh, observing that $\bbU = [\bbC\bbG_3^\rmA]^3 \subset \bbB\bbD\bbM_3^\rmA$ as required.
The reparametrisation introduces the auxiliary variables $(\bfeta, R) \in \bbU \times \bbP = [\bbC\bbG^\rmA_3]^3 \times \bbD\bbG^\rmA_2$, as well as $(\alpha, \bfomega) \in \hat{\bbA} \times \hat{\bbW} = \bbC\bbG^\rmA_5 \times \bbN\bbE\bbD^{(2),\rmA}_4$ such that (i)~$\curl\bfomega = \bfeta$ exactly, and (ii)~$\bfomega$ is $\hat{\bbA}$-discretely divergence-free.

Due to the high dimensionality, we consider a coarse uniform tetrahedral mesh of cell diameter $0.2$ (see \Cref{rem:hill_resolution}), albeit with a relatively large Reynolds number $\Re = 10^4$.
We compare with two alternative schemes:
(i)~the helicity-preserving scheme proposed by \citet{Rebholz_2007} \eqref{eq:rebholz_semidiscrete}, and (ii)~a classical unstabilised discretisation with $\bfomega = \curl\bfu$.

\begin{definition}[Helicity-stable semidiscretisation]
    Find $(\bfomega, \alpha, \bfu, P) \in \bbU \times \bbP \times \bbU \times \bbP$ such that
    \begin{subequations}\label{eq:rebholz_semidiscrete}
    \begin{align}
        (\bfomega, \bfchi)  &=  (\curl\bfu, \bfchi) + (\alpha, \div \bfchi),  \\
        0  &=  (\div\bfomega, \beta),  \\
        (\partial_t \bfu, \bfv)  &=  (\bfu \times \bfomega, \bfv) + (P, \div \bfv) - \frac{2}{\Re}(\varepsilon\,\bfu, \varepsilon\,\bfv),  \\
        0  &=  (\div \bfu, q),
    \end{align}
    \end{subequations}
    for all $(\bfchi, \beta, \bfv, q) \in \bbU \times \bbP \times \bbU \times \bbP$.
\end{definition}

In contrast to our proposed scheme \eqref{eq:semidiscrete_3d}, this semidiscretisation preserves the evolution of helicity $\calH(\bfu) \coloneqq \tfrac{1}{2}(\bfu, \curl\bfu)$, an important topological quantity in the Euler and Navier--Stokes equations \citep{Arnold_Khesin_2008}.
This helicity-preserving scheme \eqref{eq:rebholz_semidiscrete} has been observed to have improved stability under Hill vortex initial conditions \eqref{eq:hill}, when compared with a classical discretisation \citep[Sec.~2]{Andrews_Farrell_2025b} using Taylor--Hood elements.


With variables for the velocity $\bfu$ and pressure $p$ only, the unstabilised scheme requires around 74k DoFs.
With the additional auxiliary vorticity $\bfomega$ and Lagrange multiplier $\alpha$ in the same spaces, the helicity-stable scheme of \citet{Rebholz_2007} requires twice as many, around 148k.
Due to the $\bfH(\grad\curl)$-free reparametrisation \eqref{eq:charlie_3d}, our proposed scheme uses six fields in total, leading to a total of around 367k, approximately 396\% more than the unstabilised scheme.
Accessing the $\bfH(\grad\curl)$-conforming space without its implementation comes at a heavy cost (see \Cref{rem:charlie_overhead}).

\begin{remark}[Resolution for the Hill vortex discretisation]\label{rem:hill_resolution}
    By any reasonable measure, the mesh considered here is overly coarse for the given problem:
    the vortex has diameter $0.5$ and spans only two and a half cells.
    However, as stated above, with the $\bfH(\grad\curl)$-free reparametrisation \eqref{eq:charlie_3d}, our proposed semidiscretisation on the given discrete spaces already uses around 367k DoFs on the present mesh.
    The main obstacle to a higher resolution simulation is the accessibility of appropriate 3D discrete Stokes complexes\footnote{
        For example, 64k of the 367k degrees of freedom (i.e.~around 17.6\%) are spent resolving the variable $\hat{\alpha} \in \bbA = \bbC\bbG_5^\rmA$, which we know to be zero (see the proof of \Cref{th:stability});
        were $\hat{\alpha} \in \bbA = \bbC\bbG_1$ for example (as is the case in the complex of \citet{Hu_Zhang_Zhang_2022}) this would reduce to just 125 DoFs.
        The discrete variable $\bfomega \in \bbW_0$ simultaneously has a higher polynomial degree than $\bfu \in \bbU_0$, which is the opposite of what approximation theory asks for;
        the additional degrees of freedom do not improve the accuracy of $\bfomega$, which is limited by that of $\curl\bfu$.
    }.
    It is therefore unreasonable to expect \emph{any} of the discretisations considered below to reproduce the \emph{true} dynamics\footnote{
        At this resolution, it is natural to expect that the recovered vorticity $\bfomega$ in both our proposed scheme \eqref{eq:charlie_3d} and the helicity-stable scheme of \citet{Rebholz_2007} \eqref{eq:rebholz_semidiscrete} is not an accurate discrete representation of $\curl\bfu$.
    }, and we do not claim that they do so.
    We present this experiment as (i)~a verification that the full conforming 3D discretisation \eqref{eq:semidiscrete_3d} functions as defined, (ii)~a verification that the reparametrisation \eqref{eq:charlie_3d} is valid, and (iii)~a comparison between an L-stable (implicit Euler) and symmetric (implicit midpoint) integrator.
    The results should not necessarily be read as evidence that one scheme is more accurate than another.
\end{remark}

In the 2D shear flow problem above (\Cref{sec:kh}), any B-stable integrator applied to our 2D semidiscretisation \eqref{eq:semidiscrete_2d} would retain the $\bfH^1$ velocity bound from enstrophy dissipation \eqref{eq:enstrophy_discrete}.
This allowed us to use an implicit midpoint time discretisation, a symplectic method whose lack of L-stability would typically render it inappropriate for stiff parabolic systems such as the Navier--Stokes equations.
In the 3D setting, where vortex stretching (expected to make a large contribution with this initial data) can cause enstrophy to rise, this argument does not follow:
the preserved identity \eqref{eq:enstrophy_stretching} carries a production term, and bounds nothing.
We therefore consider both an implicit midpoint and implicit Euler discretisation, the latter being L-stable.

\paragraph{Results (implicit Euler).}

The implicit Euler simulations (\Cref{fig:hill_euler_data}) separate the three spatial discretisations cleanly:
the enstrophy of our proposed scheme \eqref{eq:charlie_3d} lies strictly below that of both alternatives throughout the simulation (\Cref{fig:hill_euler_enstrophy}).
Its peak sits $197\%$ above the initial value, against $488\%$ for the unstabilised scheme \eqref{eq:unstabilised_semidiscrete} and $836\%$ for the helicity-stable scheme \eqref{eq:rebholz_semidiscrete}.
The combination of the L-stable time discretisation and the enstrophy-stable spatial discretisation appears to offer favourable control over the enstrophy $\calE$, i.e.~control in $\bfH^1$ of the velocity $\bfu$.

\begin{figure}[pos=!ht]
    \centering
    \pgfplotsset{
        common axis style/.style={
            extra y ticks = {0},
            extra y tick labels = {},
            extra y tick style = {grid = major,
                grid style = {black!80, dotted, line width = 0.5pt}},
            xmin = 0, xmax = 0.5, xtick distance = 0.1,
            axis on top,
            scale only axis,
            xlabel = {$t$},
        }
    }
    \begin{subfigure}{0.5\textwidth}
        \centering
        \begin{tikzpicture}
        \begin{axis}[
            common axis style,
            ymode = log,
            width = 0.76\linewidth, height = 0.62\linewidth,
            ymin = 450, ymax = 25000, ytick = {100, 1000, 10000, 100000},
            ylabel = {$\calE(\bfu)$},
        ]
            \input{plots/hill_vortex/euler/enstrophy/asf_re_1e4.tex}
            \input{plots/hill_vortex/euler/enstrophy/rebholz_re_1e4.tex}
            \input{plots/hill_vortex/euler/enstrophy/standard_re_1e4.tex}
        \end{axis}
        \end{tikzpicture}
        \caption{Enstrophy}
        \label{fig:hill_euler_enstrophy}
    \end{subfigure}%
    \begin{subfigure}{0.5\textwidth}
        \centering
        \begin{tikzpicture}
        \begin{axis}[
            common axis style,
            width = 0.76\linewidth, height = 0.62\linewidth,
            ymin = -3, ymax = 26, ytick distance = 5,
            ylabel = {$\calH(\bfu)$},
            legend pos = north east,
            legend style = {font = \small},
        ]
            \input{plots/hill_vortex/euler/helicity/asf_re_1e4.tex}
            \addlegendentry{our proposed scheme \eqref{eq:charlie_3d}}
            \input{plots/hill_vortex/euler/helicity/rebholz_re_1e4.tex}
            \addlegendentry{helicity-stable \eqref{eq:rebholz_semidiscrete}}
            \input{plots/hill_vortex/euler/helicity/standard_re_1e4.tex}
            \addlegendentry{unstabilised \eqref{eq:unstabilised_semidiscrete}}
        \end{axis}
        \end{tikzpicture}
        \caption{Helicity}
        \label{fig:hill_euler_helicity}
    \end{subfigure}\\

    \caption{
        Evolution of enstrophy and helicity in the Hill spherical vortex simulations using implicit Euler.
    }
    \label{fig:hill_euler_data}
\end{figure}

The energy cascade present in the Navier--Stokes equations transfers the distribution of energy to the high-frequency modes.
It is well known that L-stable methods applied to linear problems introduce an artificial dissipation acting preferentially on these high-frequency modes \citep[see e.g.][Sec.~IV.3]{Hairer_Wanner_1996};
it is possible that the enstrophy-stable spatial discretisation helps preserve part of this argument in the nonlinear setting.
Formalising this argument is the subject of ongoing work.

\paragraph{Results (implicit midpoint).}

This distinction is less present in the implicit midpoint simulations (\Cref{fig:hill_midpoint_data}).
Without the damping of an L-stable method, the enstrophy $\calE$ is controlled in none of the three discretisations.
The enstrophy of our proposed scheme \eqref{eq:charlie_3d} rises to a peak approximately 1271\% above its initial value, against 897\% for the unstabilised scheme \eqref{eq:unstabilised_semidiscrete} and 1357\% for the helicity-stable scheme \eqref{eq:rebholz_semidiscrete}, and all three remain well above their initial values at $t = 0.5$.

\begin{figure}[pos=!ht]
    \centering
    \pgfplotsset{
        common axis style/.style={
            xmin = 0, xmax = 0.5, xtick distance = 0.1,
            axis on top,
            scale only axis,
            xlabel = {$t$},
            extra y ticks = {0},
            extra y tick labels = {},
            extra y tick style = {grid = major,
                grid style = {black!80, dotted, line width = 0.5pt}},
        }
    }
    \begin{subfigure}{0.5\textwidth}
        \centering
        \begin{tikzpicture}
        \begin{axis}[
            common axis style,
            ymode = log,
            width = 0.76\linewidth, height = 0.62\linewidth,
            ymin = 450, ymax = 25000, ytick = {100, 1000, 10000, 100000},
            ylabel = {$\calE(\bfu)$},
        ]
            \input{plots/hill_vortex/midpoint/enstrophy/asf_re_1e4.tex}
            \input{plots/hill_vortex/midpoint/enstrophy/rebholz_re_1e4.tex}
            \input{plots/hill_vortex/midpoint/enstrophy/standard_re_1e4.tex}
        \end{axis}
        \end{tikzpicture}
        \caption{Enstrophy}
        \label{fig:hill_midpoint_enstrophy}
    \end{subfigure}%
    \begin{subfigure}{0.5\textwidth}
        \centering
        \begin{tikzpicture}
        \begin{axis}[
            common axis style,
            width = 0.76\linewidth, height = 0.62\linewidth,
            ymin = -3, ymax = 26, ytick distance = 5,
            ylabel = {$\calH(\bfu)$},
            legend pos = north east,
            legend style = {font = \small},
        ]
            \input{plots/hill_vortex/midpoint/helicity/asf_re_1e4.tex}
            \addlegendentry{our proposed scheme \eqref{eq:charlie_3d}}
            \input{plots/hill_vortex/midpoint/helicity/rebholz_re_1e4.tex}
            \addlegendentry{helicity-stable \eqref{eq:rebholz_semidiscrete}}
            \input{plots/hill_vortex/midpoint/helicity/standard_re_1e4.tex}
            \addlegendentry{unstabilised \eqref{eq:unstabilised_semidiscrete}}
        \end{axis}
        \end{tikzpicture}
        \caption{Helicity}
        \label{fig:hill_midpoint_helicity}
    \end{subfigure}\\

    \caption{
        Evolution of enstrophy and helicity in the Hill spherical vortex simulations using implicit midpoint.
    }
    \label{fig:hill_midpoint_data}
\end{figure}

To conclude, we note that, because both are consistent time discretisations, implicit Euler and implicit midpoint should theoretically converge to the same semidiscrete solution as $\Delta t \to 0$.
We conjecture that, for a sufficiently small timestep, the implicit midpoint results would resemble those of the implicit Euler time discretisation.

\subsection{Flow past a cylinder/sphere}\label{sec:vortex_street}

To conclude, we consider the turbulent wake behind a ball $B$ in both 2D and 3D.
We centre the ball at the origin and set its diameter to $1$, defining the domain to be $\Omega = (-1, 5) \times (-1, 1)^{n-1} \setminus B$, periodic in the $x$ direction only.
In either case we use an unstructured simplicial mesh.
Despite the curved obstacle, we do not use curved elements.
Due to the discrete approximation of the curved boundary, we cannot use a conforming complex, as the no-flux condition (to be specified) on the obstacle would inherently enforce $\bfu = \bfzero$ (i.e.~no slip) at the shallow re-entrant corners around $\partial B$.
We therefore use the penalty method outlined in \Cref{sec:penalty}, with penalty parameter $\sigma = 50$.

Using the same spaces for the velocity, pressure and vorticity, we compare with (i)~the MEEVC scheme \eqref{eq:meevc_semidiscrete}, and (ii)~a typical non-conforming discretisation without any stabilisation using an interior penalty method as in \Cref{sec:penalty} \citep{Douglas_Dupont_1976,DiPietro_Ern_2011} \eqref{eq:unstabilised_semidiscrete}, specified here in the simple free-slip setting of \Cref{sec:scheme}.

\begin{definition}[Classical unstabilised semidiscretisation (penalty method)]
    Find $(\bfu, p) \in \bbU_0 \times \bbP$ such that
    \begin{subequations}\label{eq:unstabilised_semidiscrete}
    \begin{align}
        (\partial_t \bfu, \bfv)
            &=  \!\left[\int_{\calT^h} (\bfu \times \curl \bfu) \cdot \bfv + \int_{\calF^h} (\leftavg\bfu\rightavg \times \leftjump\bfu \times \bfn\rightjump) \cdot \leftavg\bfv\rightavg\right]\! + \left(\frac{1}{2}|\bfu|^2 + p, \div \bfv\right) - \frac{2}{\Re}\calD^h\,[\bfu, \bfv],  \\
        0  &=  (\div \bfu, q),
    \end{align}
    \end{subequations}
    for all $(\bfv, q) \in \bbU_0 \times \bbP$.
\end{definition}

The walls ($y$ or $z = \pm 1$) and obstacle ($\bfx \in \partial B$) both use flux-free ($\bfu \cdot \bfn = 0$) slip boundary conditions.
The former uses free slip, while the latter uses partial slip with a friction parameter $\gamma_{\text{obstacle}}$.
The flow is driven by a steady body force $F \bfe_1$.
On both the walls and the obstacle we impose the boundary conditions on the velocity strongly.
In the two mixed discretisations \eqrefs{eq:semidiscrete_2d_penalty,eq:meevc_semidiscrete}, we impose the boundary conditions on the vorticity strongly on the walls, and variationally on the obstacle following \Cref{sec:strong_vorticity} \eqref{eq:partial_compatibility}\footnote{
    For further details on enforcing general boundary conditions in the MEEVC scheme, we refer the reader to \citet{deDiego_Palha_Gerritsma_2019,Zhang_et_al_2024}.
}.

Simulations are run to a final time $t = 50$.

\subsubsection{2D}\label{sec:vortex_street_2d}

In 2D, we take a Reynolds number $\Re = 10^4$, obstacle friction $\gamma_{\text{obstacle}} = 10^2$, and forcing $F = 5 \cdot 10^{-2}$, beginning from rest $\bfu(0) = \bfzero$.
We discretise in time again with an implicit midpoint method, over a timestep $\Delta t = 5 \cdot 10^{-2}$.
We use a mesh size of around $0.2$ in the far field, and around $0.05$ in the near field (within distance 0.5 of the obstacle; see \Cref{fig:vs_2d_mesh}).
For our discrete spaces, we consider the finite element de Rham complex of polynomials \eqref{eq:fedr_poly} at $k = 2$.
With variables for the velocity $\bfu$ and pressure $p$, the unstabilised scheme uses around 36k DoFs.
With the additional auxiliary vorticity $\omega$, both the MEEVC scheme and our own require around 52k, an increase of approximately 43\%.

\begin{figure}[pos=!ht]
    \centering
    \includegraphics[width=0.6\textwidth]{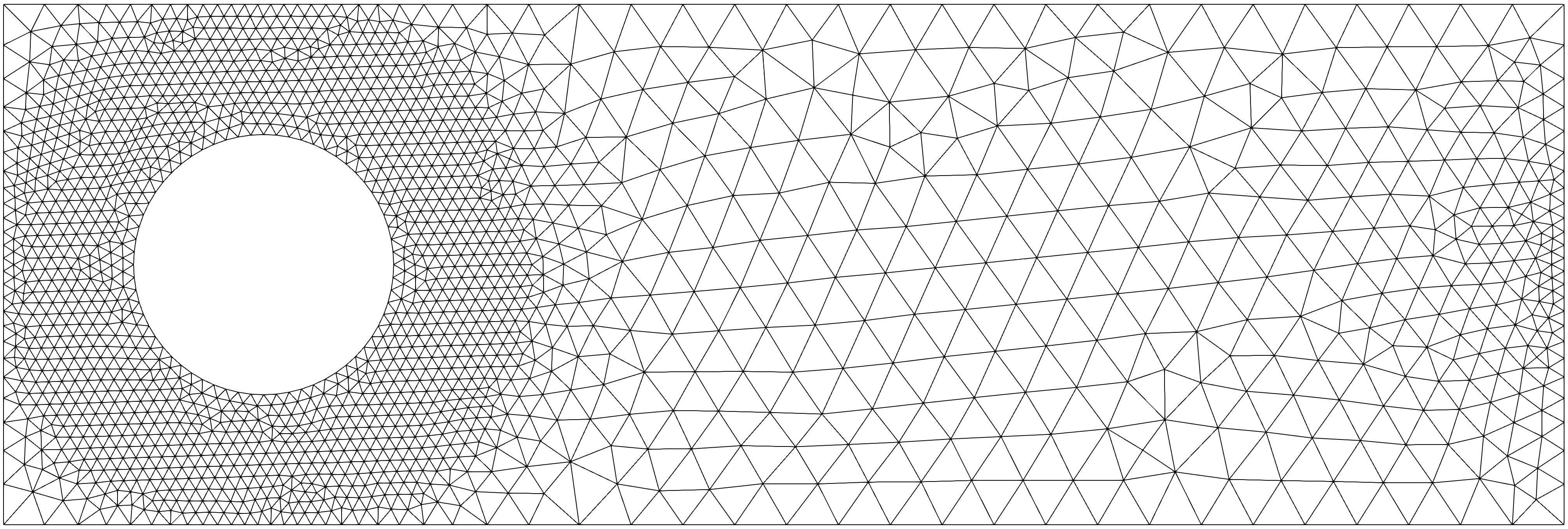}
    \caption{Triangular mesh for the 2D turbulent wake test problem.}
    \label{fig:vs_2d_mesh}
\end{figure}

\paragraph{Results.}

\Cref{fig:vs_2d_velocity} illustrates velocity profiles in each of the three simulations;
von K\'arm\'an vortex streets are visible in each case.
With the (broken) enstrophy $\calE^h(\bfu)$ not preserved in the MEEVC (\Cref{fig:vs_2d_velocity_meevc}) or unstabilised (\Cref{fig:vs_2d_velocity_unstabilised}) simulations, discontinuities in $\bfu$ are visible between cells, most prominently in the wake;
the numerical results from our scheme (\Cref{fig:vs_2d_velocity_asf}) appear visibly smoother by comparison.

\begin{figure}[pos=!ht]
    \centering
    \begin{subfigure}{0.98\textwidth}
        \centering
        \makebox[0.48\textwidth]{$t = 25$}
        \hspace{2mm}
        \makebox[0.48\textwidth]{$t = 50$}
        \\
        \vspace{2mm}
        \includegraphics[width=0.48\textwidth]{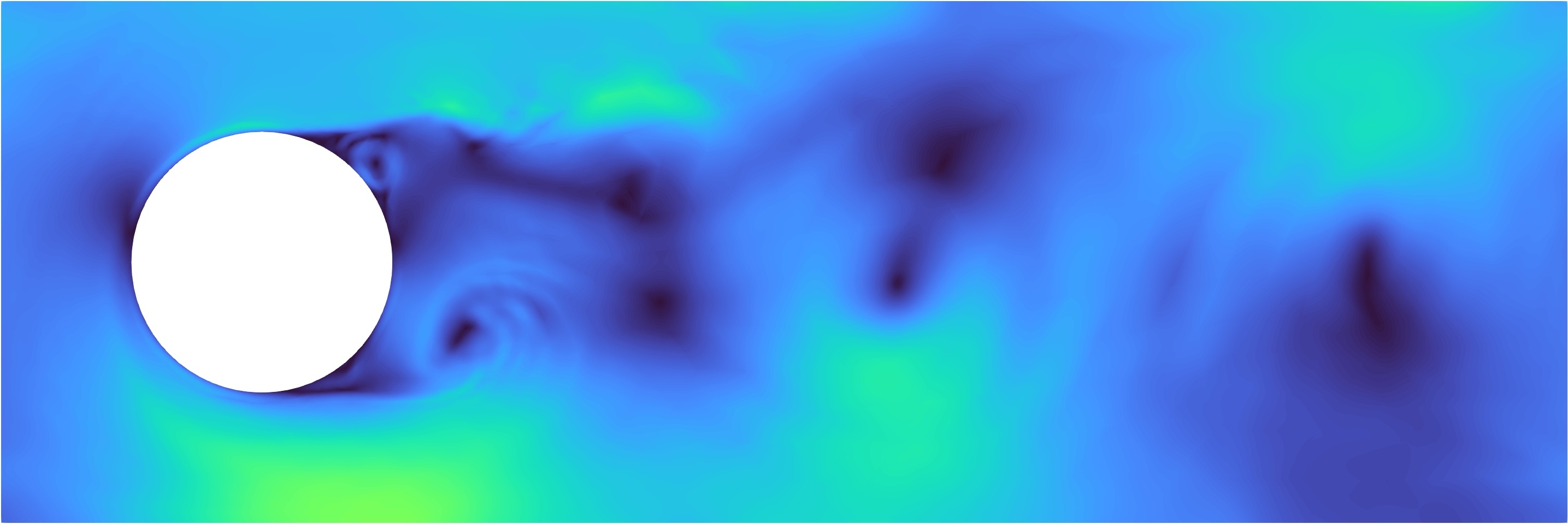}
        \hspace{2mm}
        \includegraphics[width=0.48\textwidth]{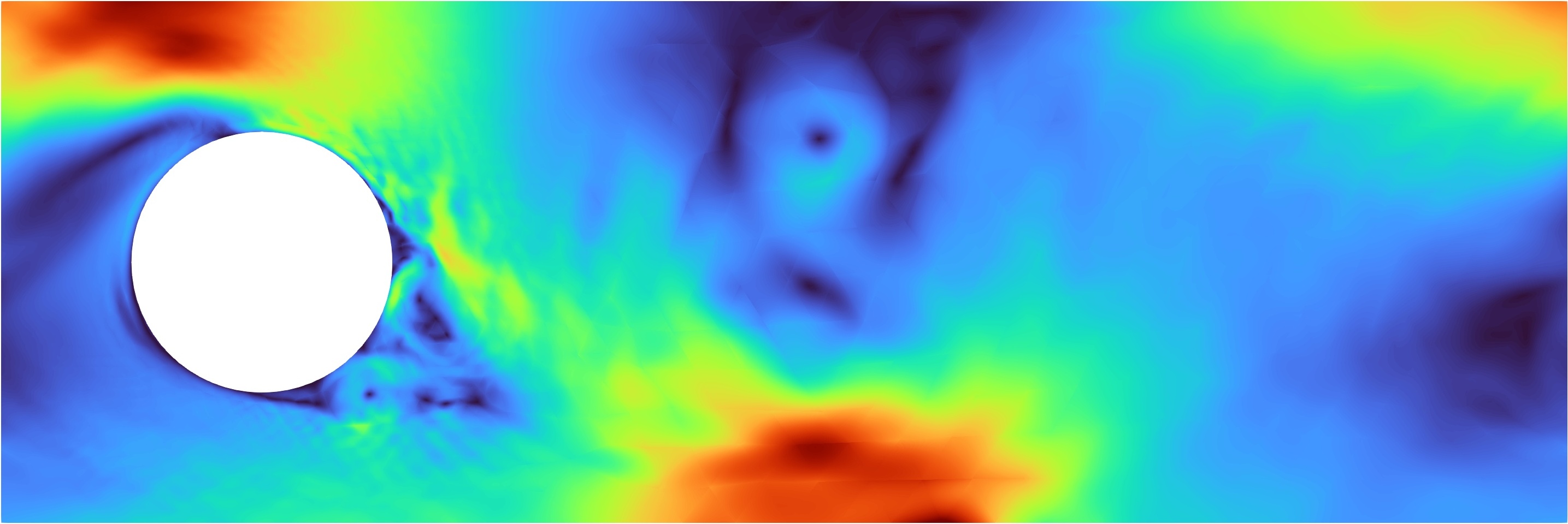}
        \caption{Our proposed scheme \eqref{eq:semidiscrete_2d_penalty}}
        \label{fig:vs_2d_velocity_asf}
    \end{subfigure}
    \\
    \vspace{2mm}
    \begin{subfigure}{0.98\textwidth}
        \centering
        \includegraphics[width=0.48\textwidth]{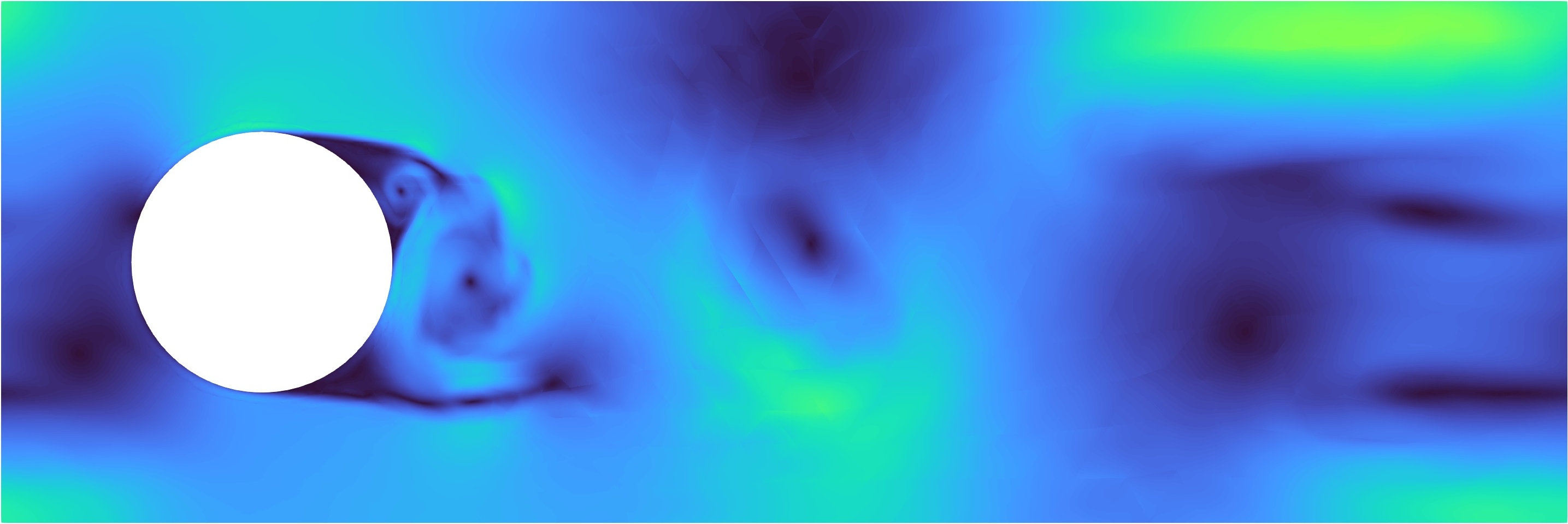}
        \hspace{2mm}
        \includegraphics[width=0.48\textwidth]{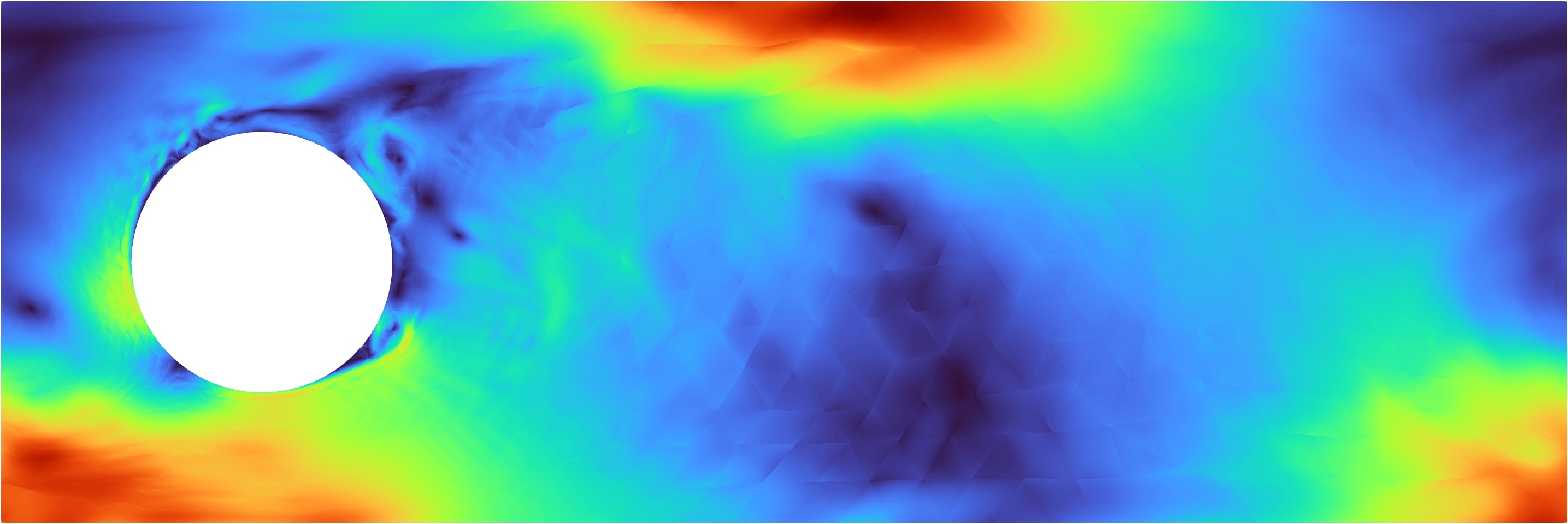}
        \caption{MEEVC \eqref{eq:meevc_semidiscrete}}
        \label{fig:vs_2d_velocity_meevc}
    \end{subfigure}
    \\
    \vspace{2mm}
    \begin{subfigure}{0.98\textwidth}
        \centering
        \includegraphics[width=0.48\textwidth]{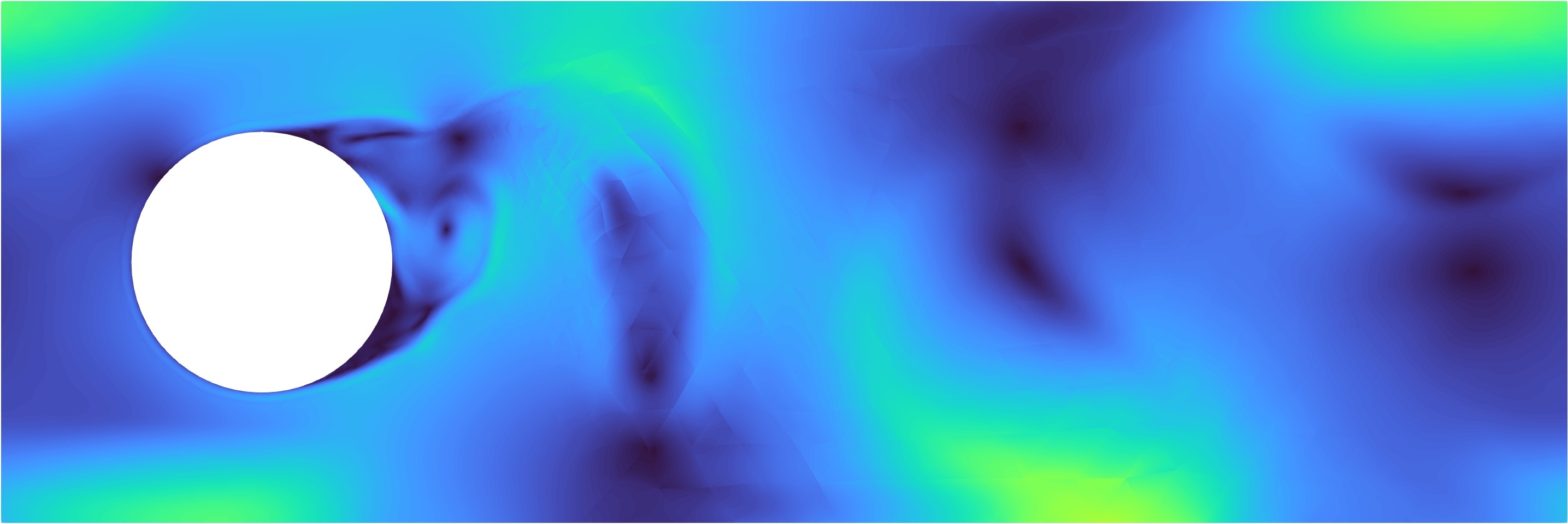}
        \hspace{2mm}
        \includegraphics[width=0.48\textwidth]{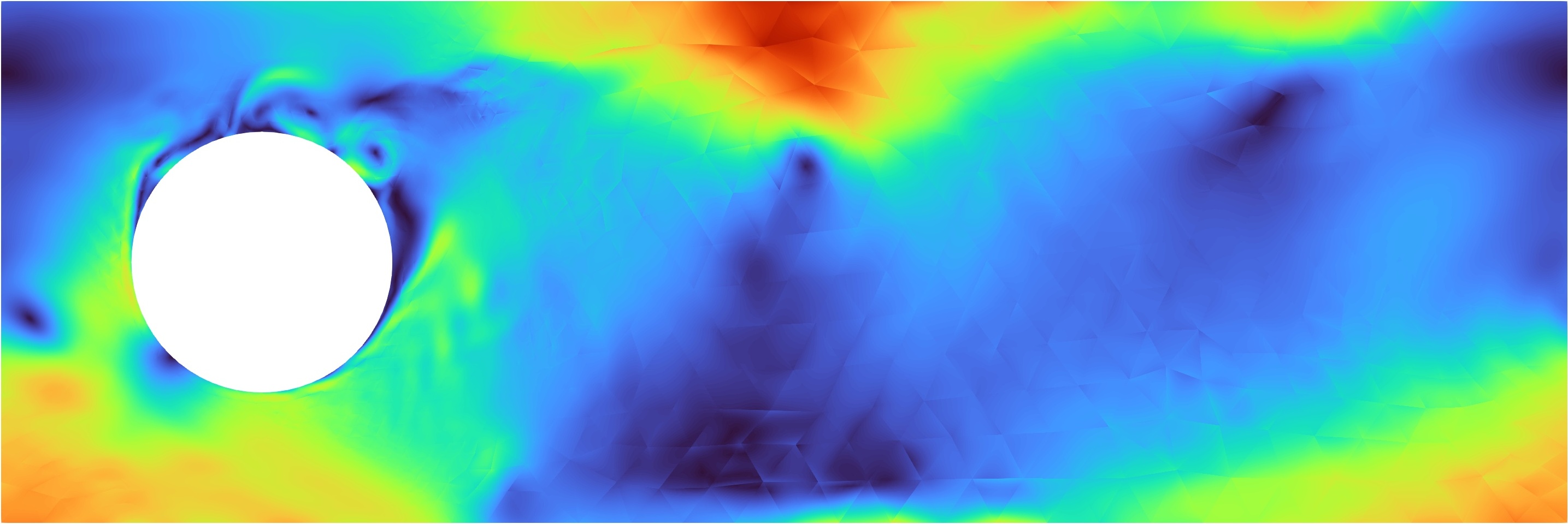}
        \caption{Unstabilised \eqref{eq:unstabilised_semidiscrete}}
        \label{fig:vs_2d_velocity_unstabilised}
    \end{subfigure}
    \caption{
        Velocity profiles for each of the three simulations of the 2D turbulent wake at times $t \in \{25, 50\}$, coloured from blue ($|\bfu| = 0$) to red ($|\bfu| = 5$).
    }
    \label{fig:vs_2d_velocity}
\end{figure}

\Cref{fig:vs_2d_data} shows the evolution of various quantities of interest across the three simulations, the first of which is the broken enstrophy $\calE^h(\bfu) \coloneqq \calD^h[\bfu, \bfu] + \gamma_{\text{obstacle}} \|\bfu\|_{\text{obstacle}}^2$, including contributions from the boundary friction (\Cref{fig:vs_2d_enstrophy})\footnote{
    Note that the enstrophy should not decrease monotonically in this problem because the flow is continually driven by the body force.
}.
Starting from rest, all three enter a turbulent regime by around $t = 25$.
We see from \Cref{fig:vs_2d_enstrophy} that the enstrophy $\calE^h(\bfu)$ in our discretisation \eqref{eq:semidiscrete_2d_penalty} remains largely the smallest of the three throughout, averaging around $350$ over the turbulent regime $t \in [25, 50]$;
compare with around $477$ for the MEEVC discretisation\footnote{
    It could be argued that a more appropriate enstrophy metric for the MEEVC scheme would be an auxiliary enstrophy evaluated on $\omega$, as it preserves the evolution of this auxiliary quantity.
    The penalty parameter $\sigma$, used in the definition of the broken enstrophy, is in fact absent from the specification of the MEEVC discretisation, as it is conforming on the lower regularity de Rham complex.
    The rise in the broken enstrophy $\calE^h(\bfu)$ is, however, meaningful in its quantification of the large discontinuities in the velocity field found with the MEEVC scheme, visible in \Cref{fig:vs_2d_velocity_meevc}.
} and $486$ without stabilisation, which are $36\%$ and $39\%$ higher, respectively.

\begin{figure}[pos=!ht]
    \centering
    \pgfplotsset{
        common axis style/.style={
            extra y ticks = {0},
            extra y tick labels = {},
            extra y tick style = {grid = major,
                grid style = {black, dotted, line width = 0.5pt}},
            xmin = 0, xmax = 50, xtick distance = 5,
            width = 0.9\textwidth, height = 0.3\textwidth,
            axis on top,
            scale only axis,
            xlabel = {$t$},
        }
    }
    \begin{subfigure}{0.95\textwidth}
        \centering
        \begin{tikzpicture}[trim axis left]
        \begin{axis}[
            common axis style,
            ymin = -100, ymax = 1350, ytick distance = 250, ylabel = {$\calE^h(\bfu)$},
            legend pos = north west,
        ]
            \input{plots/vortex_street/2d/enstrophy/asf.tex}
            \input{plots/vortex_street/2d/enstrophy/meevc.tex}
            \input{plots/vortex_street/2d/enstrophy/standard.tex}
            \legend{our proposed scheme \eqref{eq:semidiscrete_2d_penalty}, MEEVC \eqref{eq:meevc_semidiscrete}, unstabilised \eqref{eq:unstabilised_semidiscrete}}
        \end{axis}
        \end{tikzpicture}
        \caption{(Broken) enstrophy}
        \label{fig:vs_2d_enstrophy}
    \end{subfigure}\\
    \vspace{3mm}
    \begin{subfigure}{0.95\textwidth}
        \centering
        \begin{tikzpicture}[trim axis left]
        \begin{axis}[
            common axis style,
            ymin = -50, ymax = 450, ytick distance = 100, ylabel = {$\calE^h_{\mathrm{far}}(\bfu)$},
        ]
            \input{plots/vortex_street/2d/far/asf.tex}
            \input{plots/vortex_street/2d/far/meevc.tex}
            \input{plots/vortex_street/2d/far/standard.tex}
        \end{axis}
        \end{tikzpicture}
        \caption{Far-field (broken) enstrophy}
        \label{fig:vs_2d_far}
    \end{subfigure}\\
    \vspace{3mm}
    \begin{subfigure}{0.95\textwidth}
        \centering
        \begin{tikzpicture}[trim axis left]
        \begin{axis}[
            common axis style,
            ymin = -2.3, ymax = 3.3, ytick distance = 1, ylabel = {$\bfe_1 \cdot \int_{\partial B}(p \, \rmI - \tfrac{2}{\rm Re}\varepsilon\,\bfu) \cdot \bfn$},
        ]
            \input{plots/vortex_street/2d/drag/asf.tex}
            \input{plots/vortex_street/2d/drag/meevc.tex}
            \input{plots/vortex_street/2d/drag/standard.tex}
        \end{axis}
        \end{tikzpicture}
        \caption{Drag}
        \label{fig:vs_2d_drag}
    \end{subfigure}\\

    \caption{
        Evolution of the broken enstrophy, its far-field restriction \eqref{eq:far_enstrophy}, and the drag, in each of the three simulations of the 2D turbulent wake.
    }
    \label{fig:vs_2d_data}
\end{figure}

The stabilisation controls the enstrophy, but only somewhat;
the contrast is certainly less stark than the Kelvin--Helmholtz setting of \Cref{sec:kh}.
However, the dominant fraction of $\calE^h(\bfu)$ is not in the bulk of the flow:
the contribution of the partial slip friction alone accounts for around $30\%$ of $\calE^h(\bfu)$ for our scheme, around $29\%$ without stabilisation, and around $21\%$ for MEEVC\footnote{
    Specifically, these values are the mean fractions $\gamma_{\text{obstacle}} \|\bfu\|_{\text{obstacle}}^2 / \calE^h(\bfu)$ over the interval $t \in [25,50]$.
}.
A further large fraction of the enstrophy is concentrated in the immediate vicinity of the obstacle.
To separate this from the behaviour of the discretisations in the bulk, we also report the enstrophy restricted to the far field,
\begin{equation}\label{eq:far_enstrophy}
    \calE^h_{\mathrm{far}}(\bfu)  \coloneqq  \calD^h\,[\bfu, \bfu] \big|_{\Omega_{\mathrm{far}}},
    \qquad
    \Omega_{\mathrm{far}}  \coloneqq  \{\bfx \in \Omega : |\bfx| \ge 1\},
\end{equation}
in which the cells (and interior facets) within a distance $0.5$ of the obstacle are excluded, along with the friction term on $\partial B$.
Restricting attention to the far field accordingly sharpens the comparison considerably (\Cref{fig:vs_2d_far}): over the same window $\calE^h_{\mathrm{far}}(\bfu)$ averages around $79$ across $t \in [25,50]$ for our scheme, against $138$ for MEEVC and $183$ without stabilisation, which are now $74\%$ and $131\%$ higher, respectively.
From $t = 5.45$, the far-field enstrophy in our proposed discretisation remains at all times below that of both the unstabilised and MEEVC schemes.
This aligns with the velocity profiles in \Cref{fig:vs_2d_velocity}:
away from the obstacle, where the curved boundary is no longer the limiting factor, the schemes are most clearly separated.

\Cref{fig:vs_2d_drag} reports the drag $\bfe_1 \cdot \int_{\partial B}(p \, \rmI - \tfrac{2}{\rm Re}\varepsilon\,\bfu) \cdot \bfn$ on the obstacle, which oscillates strongly once vortex shedding sets in.
Averaged across $t \in [25, 50]$ the three schemes give comparable values, around $0.51$ for our proposed scheme, $0.38$ for MEEVC and $0.41$ without stabilisation.

\subsubsection{3D}\label{sec:vortex_street_3d}

In 3D, we take a smaller Reynolds number $\Re = 5 \cdot 10^2$, lower obstacle friction $\gamma_{\text{obstacle}} = 2 \cdot 10^1$, and lower forcing $F = 2 \cdot 10^{-3}$.
We use a coarser mesh of size around $0.5$ in the far field and around $0.2$ near the obstacle.
For our discrete spaces, we consider the hybrid de Rham complex \eqref{eq:fedr_hybrid} at $k = 2$.

Rather than starting from rest, we take as the initial condition $\bfu(0)$ the steady \emph{Stokes} solution (i.e.~the stationary state in the absence of the advection term) at the same problem parameters.
We note that this gives an initial velocity flux in the $x$ direction of around $3.96$, close to the cross-sectional area of $4$ for the containing box.

For the time discretisation, we consider both a two-step backward differentiation formula (BDF2) and, as for the Hill vortex (\Cref{sec:hill_vortex}), the implicit midpoint method.
We use a Newton-based adaptive procedure for the timestep\footnote{
    The target of the adaptive timestepping is 3 Newton iterates per step.
    When the nonlinear solve requires 2 or fewer iterates, the timestep at the following step is increased by a factor of $1.4$ (below the $1 + \sqrt{2}$ zero-stability bound of \citet{Grigorieff_1983});
    for 4 or more it is decreased by a factor of $0.7$.
    After 6 Newton iterates, the nonlinear solve is restarted with half the timestep.
    The initial guess comes from a linear extrapolation of the previous two steps.
}, with an initial step of $\Delta t = 1 \cdot 10^{-1}$ and a maximum step of $\Delta t = 2 \cdot 10^{-1}$.

\begin{remark}[BDF2 and G-stability]\label{rem:bdf2}
    BDF2 is A-stable, L-stable and (crucially here, being a multistep method) G-stable \citep[][Sec.~V.6]{Dahlquist_1976,Hairer_Wanner_1996}.
    For a constant timestep\footnote{
        Note that G-stability is a property of BDF2 with a \emph{constant} timestep.
        For variable timesteps, the BDF2 coefficients depend on the step ratio $\Delta t_n / \Delta t_{n-1}$, and the G-stability argument no longer applies.
        With the adaptive timestepping used here, the modified energy and enstrophy stability of the BDF2 simulations \eqref{eq:bdf2_modified} is therefore not guaranteed.
        This does not affect the implicit midpoint method however, whose stability holds for any timestep.
    }, after time discretisation the energy and enstrophy stability results \eqrefs{eq:discrete_stability,eq:discrete_stability_bcs} hold not on the energy $\calK(\bfu^n)$ and enstrophy $\calE(\bfu^n)$ at a single time level $\bfu^n = \bfu(t^n)$, but on the respective multistep G-approximants
    \begin{equation}\label{eq:bdf2_modified}
        \frac{1}{2}\calK(\bfu^{n}) + \frac{1}{2}\calK(2\bfu^{n} - \bfu^{n-1}),
        \qquad
        \frac{1}{2}\calE(\bfu^{n}) + \frac{1}{2}\calE(2\bfu^{n} - \bfu^{n-1}),
    \end{equation}
    with $\bfu^{n-1} = \bfu(t^{n-1})$.
\end{remark}
The domain here is topologically non-trivial, exhibiting a harmonic 1-form that must be accounted for in the discretisation (see \Cref{sec:periodic_2}).
Up to a lifting of the tangential boundary conditions, the vorticity $\bfomega$ lies in $\bbW_0 \subset \bfH_0(\curl)$, for which the relevant harmonic space is $\frakH^1_0 \subset \bbW_0$ of dimension $b_2$;
the cavity induced by the ball contributes $b_2 = 1$\footnote{
    Note that in the 2D problem above (\Cref{sec:vortex_street_2d}) there are \emph{two} (independent) non-trivial harmonic 1-forms in $\frakH^1_0$:
    one from the cavity induced by the ball, and one from the $x$-periodicity.
    Recall, however, that in the 2D case $\frakH^1_0 \subset \bbU_0$, the space occupied by the velocity $\bfu \in \bbU_0$;
    it is the non-trivial harmonic 0-forms in 2D that present issues for the vorticity equation \eqref{eq:semidiscrete_2d_a}, of which this problem has none.
}.

Rather than removing this harmonic component with a Lagrange multiplier $\bflambda \in \frakH^1_0$ \eqref{eq:harmonic_constraint}, we use the MEEVC-inspired stabilisation \eqref{eq:semidiscrete_3d_delta} of \Cref{sec:meevc_stabilisation} with $\delta = 10^{-5}$;
this imposes both the discrete divergence-free and harmonic-free conditions on $\bfomega$ implicitly, and dispenses with both $\alpha \in \bbA_0$ and the multiplier $\bflambda \in \frakH^1_0$.
This leaves our scheme with the same variables, and hence the same number of degrees of freedom, as MEEVC.
With variables for the velocity $\bfu$ and pressure $p$, the unstabilised scheme has around 81k DoFs.
With the additional auxiliary vorticity $\bfomega$, both the MEEVC scheme and our own require around 151k, an increase of approximately 87\%.
Without the $\delta$-stabilisation terms \eqref{eq:semidiscrete_3d_delta}, the multiplier $\alpha$ (and the constraint fixing the harmonic component of $\bfomega$) would have added a further 18k.

Using the 3D penalty method \eqref{eq:semidiscrete_3d_penalty} directly, we observe that the vorticity diverges very rapidly, before the wake has the opportunity to develop.
This is in line with the unstable mode discussed in \Cref{sec:penalty_sigma}.
Following the argument therein, we reduce the penalty parameter in the vorticity reconstruction to some $\bar{\sigma} \in (0, \sigma)$, retaining $\sigma$ in the momentum equation.
\Cref{fig:vs_3d_sigma} considers the effects of different choices for $\bar{\sigma}$ up to a final time of only $t = 10$;
specifically, we consider $\bar{\sigma}/\sigma \in \{0, 0.03, 0.1, 0.3, 0.5, 1\}$.
\Cref{fig:vs_3d_sigma_vorticity} plots the vorticity norm $\|\bfomega\|^2$ for each value of $\bar{\sigma}$:
the divergence at $\bar{\sigma} = \sigma$ is clear, with the remaining choices behaving more stably.
In the continuous setting, integration by parts along with the boundary condition relation \eqref{eq:compatibility} of \citet{Costabel_Dauge_1999} gives
\begin{equation}
    \|\curl\bfu\|^2
        =  2 \|\varepsilon\,\bfu\|^2 - 4 \|\bfu\|_{\partial B}^2.
\end{equation}
\Cref{fig:vs_3d_sigma_ratio} thus non-dimensionalises this metric by evaluating the ratio of the norm $\|\bfomega\|^2$ to $2\calD^h[\bfu, \bfu] - 4 \|\bfu\|_{\partial B}^2$:
the two remain most uniformly in balance within the considered set at $\bar{\sigma} = 0.3\sigma$, motivating us to make this choice for the full simulation.
Whether this scaling remains appropriate in different problem settings remains an open question.

\begin{figure}[pos=!ht]
    \centering
    \pgfplotsset{
        common axis style/.style={
            xmin = 0, xmax = 10, xtick distance = 2,
            width = 0.76\linewidth, height = 0.62\linewidth,
            axis on top,
            scale only axis,
            xlabel = {$t$},
            extra y ticks = {0},
            extra y tick labels = {},
            extra y tick style = {grid = major,
                grid style = {black, dotted, line width = 0.5pt}},
        }
    }
    \begin{subfigure}{0.5\textwidth}
        \centering
        \begin{tikzpicture}
        \begin{axis}[
            common axis style,
            ymode = log,
            extra y ticks = {},
            ylabel = {$\|\bfomega\|^2$},
            legend pos = north east,
            legend cell align = left,
            every axis plot post/.append style = {thick},
        ]
            \input{plots/vortex_street/3d/vorticity/sig100.tex}
            \input{plots/vortex_street/3d/vorticity/sig050.tex}
            \input{plots/vortex_street/3d/vorticity/sig030.tex}
            \input{plots/vortex_street/3d/vorticity/sig010.tex}
            \input{plots/vortex_street/3d/vorticity/sig003.tex}
            \input{plots/vortex_street/3d/vorticity/sig000.tex}
        \end{axis}
        \end{tikzpicture}
        \caption{Vorticity $\|\bfomega\|^2$}
        \label{fig:vs_3d_sigma_vorticity}
    \end{subfigure}%
    \begin{subfigure}{0.5\textwidth}
        \centering
        \begin{tikzpicture}
        \begin{axis}[
            common axis style,
            ymode = log,
            extra y ticks = {1},
            ymin = 0.095, ymax = 10.5, ytick = {0.01, 0.1, 1, 10, 100},
            ylabel = {$\|\bfomega\|^2 / \big(2\calD^h[\bfu, \bfu] - 4\|\bfu\|_{\partial B}^2\big)$},
            every axis plot post/.append style = {thick},
        ]
            \input{plots/vortex_street/3d/ratio/sig100.tex}
            \input{plots/vortex_street/3d/ratio/sig050.tex}
            \input{plots/vortex_street/3d/ratio/sig030.tex}
            \input{plots/vortex_street/3d/ratio/sig010.tex}
            \input{plots/vortex_street/3d/ratio/sig003.tex}
            \input{plots/vortex_street/3d/ratio/sig000.tex}
        \end{axis}
        \end{tikzpicture}
        \caption{Consistency of the reconstruction}
        \label{fig:vs_3d_sigma_ratio}
    \end{subfigure}

    \caption{
        Effect of the penalty parameter $\bar{\sigma}$ used in the vorticity reconstruction \eqrefs{eq:semidiscrete_3d_penalty_a,eq:semidiscrete_3d_penalty_b} with all other parameters held fixed.
    }
    \label{fig:vs_3d_sigma}
\end{figure}

To quantify the error introduced by the $\delta$ stabilisation \eqref{eq:semidiscrete_3d_delta}, we repeat the above simulation at $\bar{\sigma} = 0.3\sigma$ with $\delta = 0$, i.e.~restoring the multiplier $\alpha$ and the Lagrange multiplier $\bflambda$ fixing the harmonic component of $\bfomega$ \eqref{eq:harmonic_constraint}, and holding all other parameters fixed.
The two agree to four significant figures in every quantity we record:
the velocity flux, the broken enstrophy and its far-field restriction, the energy, and $\|\bfomega\|^2$.
While the stabilisation makes little difference in accuracy (at least to the precision these diagnostics resolve), the nonlinear solves are completed in approximately $0.4$ times the wall-clock time on the same machine\footnote{
    The saving is larger than the $10\%$ reduction in degrees of freedom would suggest, since the real block carrying the multiplier $\bflambda$ cannot be held in a monolithic sparse matrix, and forces a matrix-free Schur complement in place of a single factorisation.
}.

\paragraph{Results (BDF2).}

\Cref{fig:vs_3d_bdf2_data} shows the evolution of the far-field (broken) enstrophy $\calE^h_{\mathrm{far}}$ (\Cref{fig:vs_3d_bdf2_far}) and energy $\calK$ (\Cref{fig:vs_3d_bdf2_energy}) in our proposed scheme \eqref{eq:semidiscrete_3d_penalty} with $\bar{\sigma} = 0.3\sigma$, the unstabilised scheme \eqref{eq:unstabilised_semidiscrete}, and the MEEVC scheme \eqref{eq:meevc_semidiscrete}, using BDF2.
Both our proposed discretisation and the unstabilised scheme reach the final time $t = 50$, while our nonlinear solver fails for the MEEVC discretisation at $t = 5.20$;
at this point its far-field enstrophy $\calE^h_{\mathrm{far}}$ has reached $783$, more than forty times that of either other scheme.
Our scheme ends at around $\calE^h_{\mathrm{far}} = 8.1$, against $13.9$ for the unstabilised discretisation.
The far-field enstrophy from our scheme is no longer uniformly below that of the unstabilised scheme as was the case in the 2D setting (\Cref{sec:vortex_street_2d}), although it lies below for around $73\%$ of the simulation, and throughout from $t \approx 28$ onwards.
\Cref{fig:vs_3d_bdf2_energy} shows roughly comparable rates of decay for the energy in our proposed discretisation when compared with the unstabilised one.

\begin{figure}[pos=!ht]
    \centering
    \pgfplotsset{
        common axis style/.style={
            xmin = 0, xmax = 50, xtick distance = 10,
            width = 0.76\linewidth, height = 0.62\linewidth,
            axis on top,
            scale only axis,
            xlabel = {$t$},
        }
    }
    \begin{subfigure}{0.5\textwidth}
        \centering
        \begin{tikzpicture}
        \begin{axis}[
            common axis style,
            ymode = log,
            ymin = 2, ymax = 2000,
            ytick = {10, 100, 1000},
            ylabel = {$\calE^h_{\mathrm{far}}(\bfu)$},
            legend pos = north east,
        ]
            \input{plots/vortex_street/3d/bdf2/far/asf.tex}
            \input{plots/vortex_street/3d/bdf2/far/meevc.tex}
            \input{plots/vortex_street/3d/bdf2/far/standard.tex}
            \legend{our proposed scheme \eqref{eq:semidiscrete_3d_penalty}, MEEVC \eqref{eq:meevc_semidiscrete}, unstabilised \eqref{eq:unstabilised_semidiscrete}}
        \end{axis}
        \end{tikzpicture}
        \caption{Far-field (broken) enstrophy}
        \label{fig:vs_3d_bdf2_far}
    \end{subfigure}%
    \begin{subfigure}{0.5\textwidth}
        \centering
        \begin{tikzpicture}
        \begin{axis}[
            common axis style,
            ymode = log,
            ymin = 2.5, ymax = 14,
            ytick = {4, 6, 8, 10, 12},
            yticklabels = {$4$, $6$, $8$, $10$, $12$},
            ylabel = {$\calK(\bfu)$},
        ]
            \input{plots/vortex_street/3d/bdf2/energy/asf.tex}
            \input{plots/vortex_street/3d/bdf2/energy/meevc.tex}
            \input{plots/vortex_street/3d/bdf2/energy/standard.tex}
        \end{axis}
        \end{tikzpicture}
        \caption{Energy}
        \label{fig:vs_3d_bdf2_energy}
    \end{subfigure}\\

    \caption{
        Evolution of the far-field (broken) enstrophy \eqref{eq:far_enstrophy} and energy in the 3D turbulent wake simulations using BDF2.
    }
    \label{fig:vs_3d_bdf2_data}
\end{figure}

\paragraph{Results (implicit midpoint).}

\Cref{fig:vs_3d_midpoint_data} shows the same quantities using the implicit midpoint method.
While the MEEVC discretisation does reach the final time $t = 50$, its far-field enstrophy $\calE^h_{\mathrm{far}}$ nevertheless undergoes the same early growth, peaking at around $928$ at $t \approx 5.8$, and remains above that of both other schemes throughout the rest of the simulation.
Its energy $\calK$ meanwhile decays to $2.9$ by $t = 50$, less than half that of either other scheme.
Our scheme ends at around $\calE^h_{\mathrm{far}} = 8.9$, against $12.9$ for the unstabilised discretisation, lying below it for around $72\%$ of the simulation and throughout from $t \approx 28$ onwards.

\begin{figure}[pos=!ht]
    \centering
    \pgfplotsset{
        common axis style/.style={
            xmin = 0, xmax = 50, xtick distance = 10,
            width = 0.76\linewidth, height = 0.62\linewidth,
            axis on top,
            scale only axis,
            xlabel = {$t$},
        }
    }
    \begin{subfigure}{0.5\textwidth}
        \centering
        \begin{tikzpicture}
        \begin{axis}[
            common axis style,
            ymode = log,
            ymin = 2, ymax = 2000,
            ytick = {10, 100, 1000},
            ylabel = {$\calE^h_{\mathrm{far}}(\bfu)$},
            legend pos = north east,
        ]
            \input{plots/vortex_street/3d/midpoint/far/asf.tex}
            \input{plots/vortex_street/3d/midpoint/far/meevc.tex}
            \input{plots/vortex_street/3d/midpoint/far/standard.tex}
            \legend{our proposed scheme \eqref{eq:semidiscrete_3d_penalty}, MEEVC \eqref{eq:meevc_semidiscrete}, unstabilised \eqref{eq:unstabilised_semidiscrete}}
        \end{axis}
        \end{tikzpicture}
        \caption{Far-field (broken) enstrophy}
        \label{fig:vs_3d_midpoint_far}
    \end{subfigure}%
    \begin{subfigure}{0.5\textwidth}
        \centering
        \begin{tikzpicture}
        \begin{axis}[
            common axis style,
            ymode = log,
            ymin = 2.5, ymax = 14,
            ytick = {4, 6, 8, 10, 12},
            yticklabels = {$4$, $6$, $8$, $10$, $12$},
            ylabel = {$\calK(\bfu)$},
        ]
            \input{plots/vortex_street/3d/midpoint/energy/asf.tex}
            \input{plots/vortex_street/3d/midpoint/energy/meevc.tex}
            \input{plots/vortex_street/3d/midpoint/energy/standard.tex}
        \end{axis}
        \end{tikzpicture}
        \caption{Energy}
        \label{fig:vs_3d_midpoint_energy}
    \end{subfigure}\\

    \caption{
        Evolution of the far-field (broken) enstrophy \eqref{eq:far_enstrophy} and energy in the 3D turbulent wake simulations using implicit midpoint.
    }
    \label{fig:vs_3d_midpoint_data}
\end{figure}

As in the BDF2 case (\Cref{fig:vs_3d_bdf2_energy}), \Cref{fig:vs_3d_midpoint_energy} shows that the energy decays at a similar rate for our proposed discretisation and the unstabilised scheme.
MEEVC has the fastest decay rate of the three schemes.

The velocity profiles and streamlines in \Cref{fig:vs_3d_velocity} compare the state of our proposed scheme \eqref{eq:semidiscrete_3d_penalty}, the unstabilised one \eqref{eq:unstabilised_semidiscrete}, and MEEVC \eqref{eq:meevc_semidiscrete} respectively, at the final time $t = 50$.
The MEEVC discretisation (\Cref{fig:vs_3d_velocity_meevc}) is visibly of lower energy than the other two, in line with \Cref{fig:vs_3d_midpoint_energy}, and exhibits larger jumps between cells.

Our proposed discretisation (\Cref{fig:vs_3d_velocity_asf}) appears marginally more regular in the wake than the unstabilised one (\Cref{fig:vs_3d_velocity_unstabilised}), consistent with its lower far-field enstrophy over the latter part of the simulation (\Cref{fig:vs_3d_midpoint_far}).
One plausible explanation for this is vortex compression.
The vorticity shed from the obstacle lies largely in the plane transverse to the flow direction $\bfe_1$.
As the wake recovers downstream of the obstacle, the streamwise velocity increases ($\partial_x u_1 > 0$) and the flow contracts in the transverse plane ($\partial_y u_2 + \partial_z u_3 < 0$).
We expect vortex tubes lying in this plane therefore generally to be compressed rather than stretched, such that the stretching term $\bfomega \cdot \varepsilon\,\bfu \cdot \bfomega$ of \eqref{eq:enstrophy_stretching_continuous} tends to be negative, and enstrophy is removed from the wake, stabilising the flow.
A discretisation that reproduces this mechanism faithfully would be expected to regularise the wake accordingly.

\begin{remark}[Interpretation of the 3D wake results]\label{rem:vs_3d_interpretation}
    Due to the reduced penalty parameter $\bar{\sigma}$ in the vorticity reconstruction (\Cref{sec:penalty_sigma}), the discretisation used here is not fully enstrophy-stable in the sense of \Cref{th:stability}.
    As with the Hill spherical vortex (\Cref{sec:hill_vortex}), these simulations are therefore primarily a demonstration that the proposed semidiscretisation \eqref{eq:semidiscrete_3d_penalty} functions in a non-trivial 3D setting, and the observations above cannot be attributed with confidence to enstrophy stability.
    A true test would require both (i)~a higher spatial resolution, and (ii)~a weakly conforming Stokes complex, capable of handling the re-entrant corners of the approximated obstacle without resorting to the penalty method (see \Cref{rem:conformity}).
\end{remark}


\begin{figure}[pos=!ht]
    \centering
    \begin{subfigure}{\textwidth}
        \centering
        \includegraphics[width=0.75\textwidth]{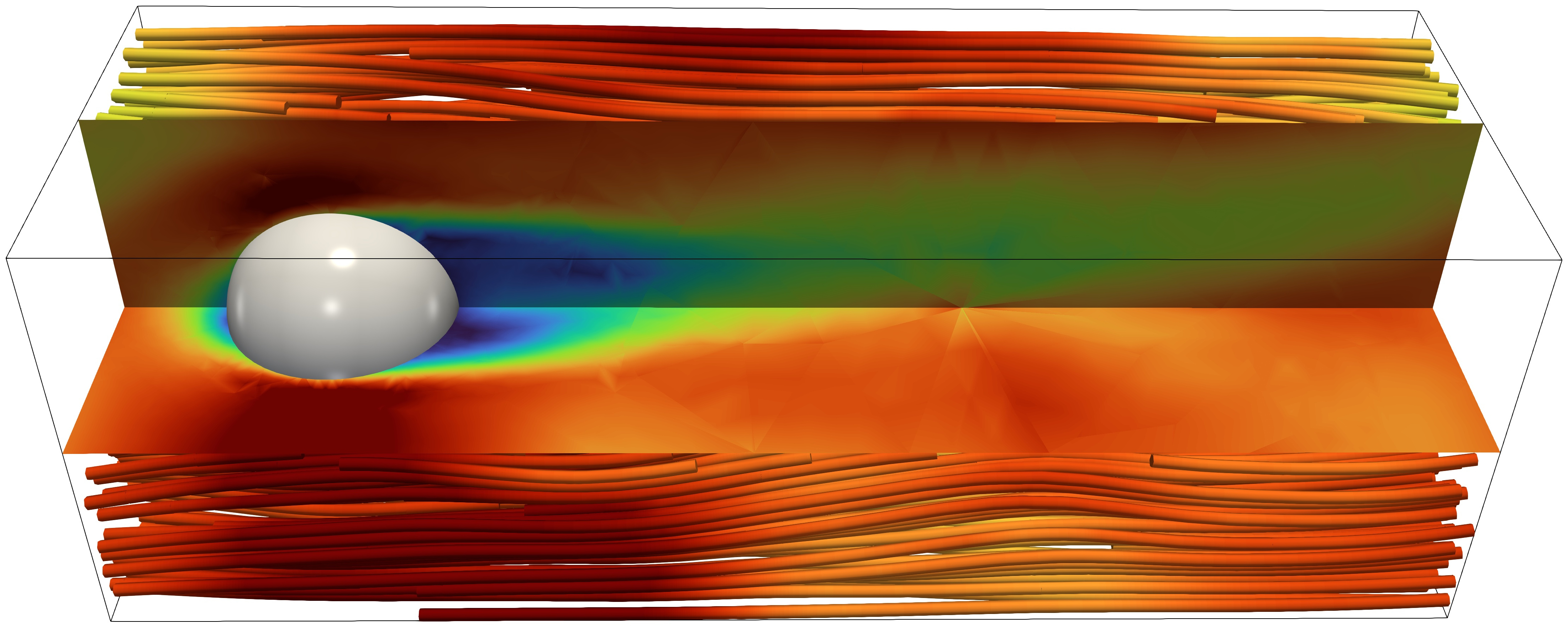}
        \caption{Our proposed scheme \eqref{eq:semidiscrete_3d_penalty}}
        \label{fig:vs_3d_velocity_asf}
    \end{subfigure}
    \\
    \vspace{2mm}
    \begin{subfigure}{\textwidth}
        \centering
        \includegraphics[width=0.75\textwidth]{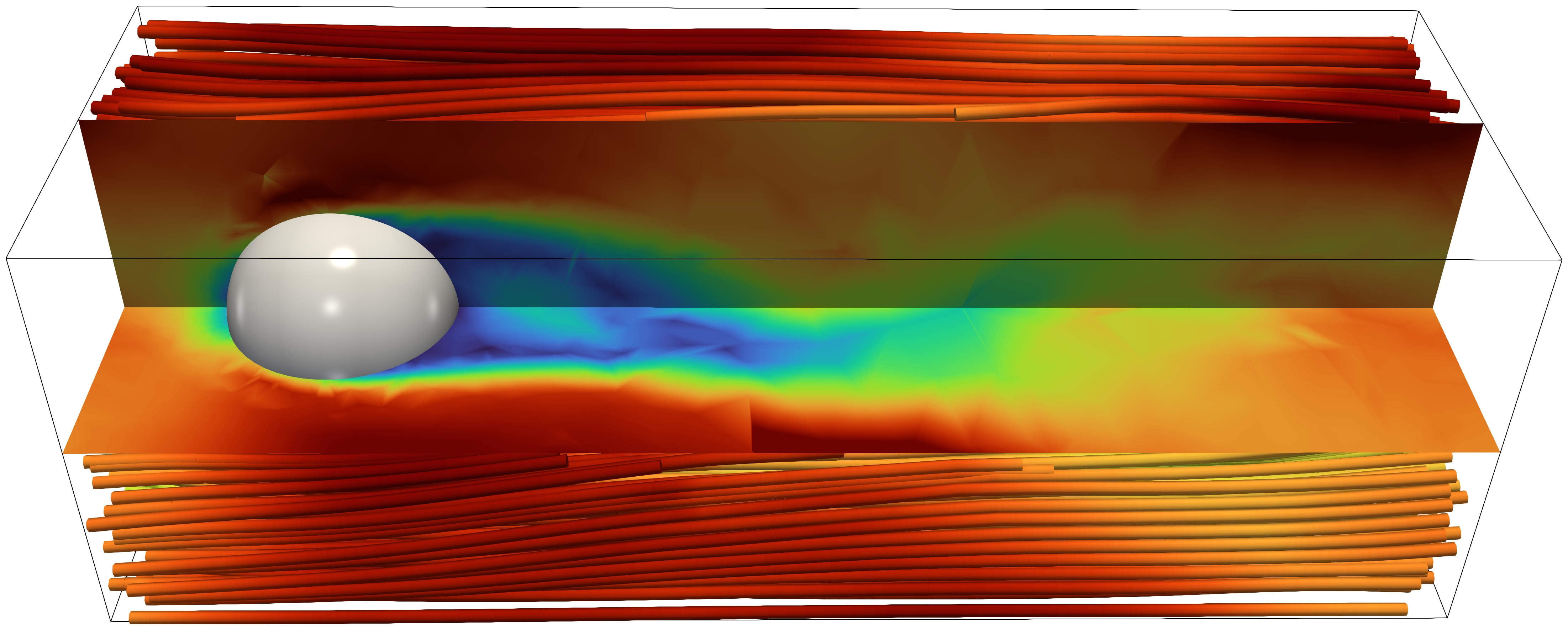}
        \caption{Unstabilised \eqref{eq:unstabilised_semidiscrete}}
        \label{fig:vs_3d_velocity_unstabilised}
    \end{subfigure}
    \\
    \vspace{2mm}
    \begin{subfigure}{\textwidth}
        \centering
        \includegraphics[width=0.75\textwidth]{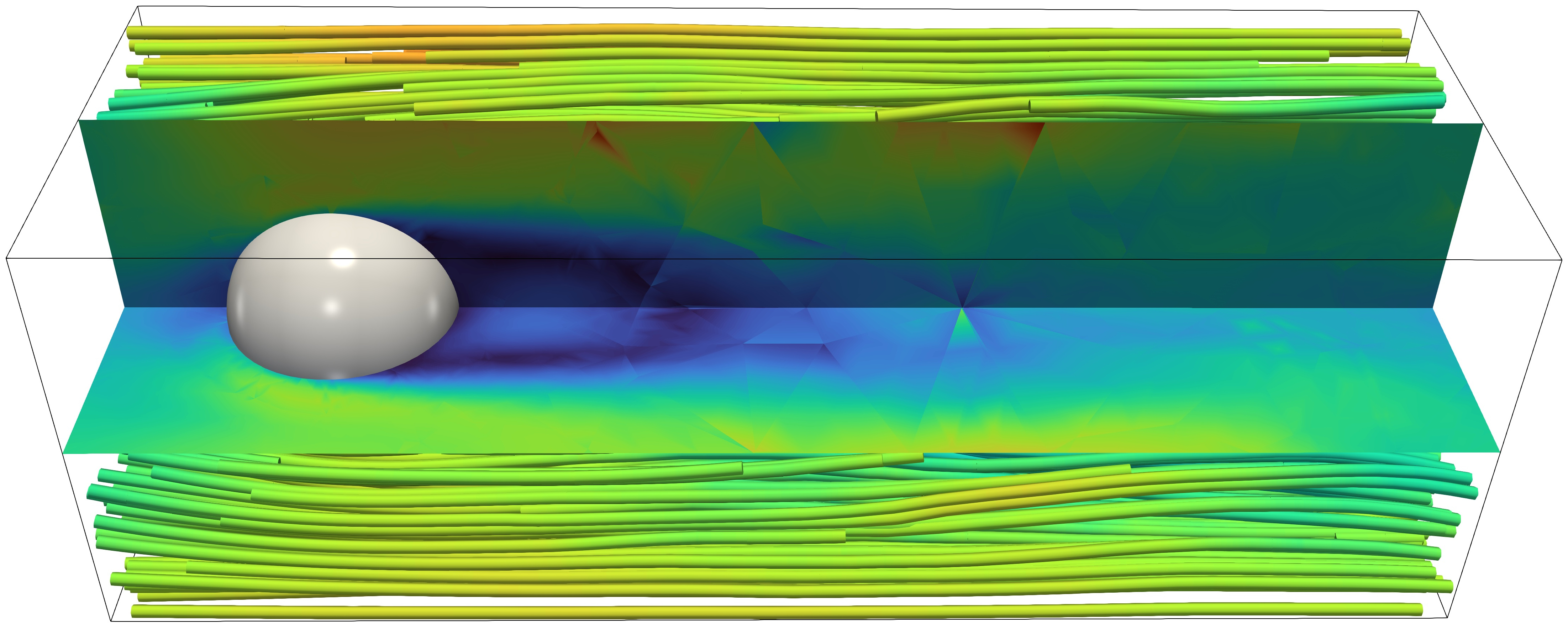}
        \caption{MEEVC \eqref{eq:meevc_semidiscrete}}
        \label{fig:vs_3d_velocity_meevc}
    \end{subfigure}
    \caption{
        Velocity profiles on cross sections of the flow for each of the three simulations of the 3D turbulent wake using implicit midpoint at the final time $t = 50$, coloured from blue ($|\bfu| = 0$) to red ($|\bfu| = 1$), with streamlines in the background.
    }
    \label{fig:vs_3d_velocity}
\end{figure}

\section{Conclusions and future work}\label{sec:conclusions}

We have proposed and analysed a mixed finite element discretisation for the incompressible Navier--Stokes equations \eqref{eq:navier-stokes_1} that preserves the evolution of both energy $\calK$ \eqref{eq:energy} and enstrophy $\calE$ \eqref{eq:enstrophy_both} at the discrete level \eqref{eq:discrete_stability}.
In two dimensions, the boundedness of enstrophy yields a Reynolds-robust bound on $\|\nabla\bfu\|$, providing a natural stabilisation mechanism for under-resolved flows.
Conforming implementations rest on discrete Stokes complexes with $\bfH(\grad\curl)$- or $H^2$-conforming vorticity spaces \eqrefs{eq:stokes_3d,eq:stokes_2d} which are available only in limited polynomial degrees and mesh configurations.
We have therefore introduced two reformulations: a symmetric interior penalty formulation over the standard discrete de Rham complex (\Cref{sec:penalty}), and an $\bfH(\grad\curl)$- or $H^2$-free reparametrisation via auxiliary variables using only standard $\grad$-, $\curl$- and $\div$-conforming spaces (\Cref{sec:charlie}).
A broad range of boundary conditions can be incorporated (\Cref{sec:bcs}).
Numerical experiments (\Cref{sec:simulations}) demonstrate a stabilising effect of enstrophy preservation, in particular in 2D.
Several directions for future work present themselves.

\paragraph{Computational efficiency and preconditioning.}

The auxiliary-variable formulation introduces additional degrees of freedom requiring a nonlinear solve at each timestep.
IMEX splitting to treat the advective term of our semidiscretisation explicitly \eqrefs{eq:semidiscrete_3d,eq:semidiscrete_2d} \citep{Ascher_Ruuth_Wetton_1995,Ern_Guermond_2023}, alongside robust preconditioner strategies \citep{Benzi_Golub_Liesen_2005,Elman_Silvester_Wathen_2014,Farrell_Mitchell_Wechsung_2019}, would reduce the computational cost considerably, bringing the scheme closer to large-scale practical use.
It remains an open question how such split time discretisations would affect the enstrophy stability and corresponding $\Re$-robustness properties.

\paragraph{More viable discrete Stokes complexes (low-degree and curved elements).}

The principal practical limitation of the present approach is the lack of efficient, low-degree discrete Stokes complexes (see \Cref{rem:charlie_overhead}).
Particularly in 3D, existing constructions often rely on supersmoothness, which results in spaces of high polynomial degree and generates large linear systems \citep{Neilan_2015,Fu_Guzman_Neilan_2020}.
Furthermore, curved boundaries (such as the obstacle considered in \Cref{sec:vortex_street}) require special handling when modelled on non-curved meshes (see \Cref{sec:strong_vorticity}).
It remains generally unclear, however, how to transform the shape functions in a discrete Stokes complex onto curved elements without losing the complex property.
The further development and implementation of trimmed Stokes complexes in the spirit of \citet{Guzman_Neilan_2018} and \citet{Hu_Zhang_Zhang_2022}, alongside discrete Stokes complexes that remain valid on curved meshes, would substantially improve the practical reach of the proposed schemes.

\paragraph{Manifolds and non-smooth domains.}

Extension to Riemannian manifolds raises further open questions.
The several possible notions of the vector Laplacian no longer coincide, and the appropriate definition of enstrophy is not immediate.
Discrete Stokes complexes on manifolds without global $C^1$ regularity, such as Regge manifolds \citep{Christiansen_2011,Gawlik_McKee_2026}, remain an ongoing area of research;
non-conforming Stokes elements, e.g.~the Mardal--Tai--Winther element \citep{Mardal_Tai_Winther_2002,Tai_Winther_2006}, represent an important step in this direction.

\paragraph{Spectral and statistical properties.}

The Navier--Stokes equations carry invariant structure beyond energy and enstrophy.
In 2D, the full hierarchy of Casimir invariants (i.e.~all moments of the vorticity) is conserved in the Euler limit $\Re = \infty$.
In both 2D and 3D, the energy spectrum and its cascades (including the Kolmogorov $-5/3$ scaling law) are central to the characterisation of turbulence \citep{Kraichnan_1967,Kolmogorov_1991,Frisch_Kolmogorov_1995}.
Since enstrophy $\calE$ \eqref{eq:enstrophy} is the second moment of the energy spectrum, enstrophy-stable discretisations as proposed here preserve the evolution of this quantity;
it is an open and numerically interesting question whether such schemes may reproduce the full spectral and statistical structure of turbulent flows more faithfully than classical discretisations.

\paragraph{Analysis.}

The existence and convergence of discrete solutions are not addressed here.
In light of recent works using vortex stretching to give finite-time blow-ups for the 3D Navier--Stokes equations \citep{OpenAI_2026b}, a discretisation that faithfully reproduces the vortex stretching term $\int_\Omega \bfomega \cdot \varepsilon\,\bfu \cdot \bfomega$ \eqref{eq:enstrophy_stretching_continuous} in the generation of enstrophy might more reliably reproduce these dynamics.
A similarly interesting open question is whether discretely preserving the enstrophy evolution equation implies analogues of its continuous consequences (local-in-time existence of strong solutions and global existence under small-data conditions).
As above, those consequences typically rest on the vortex stretching form of the enstrophy evolution \citep{Robinson_2020}, which our discretisation largely reproduces (\Cref{th:stretching}).

\printcredits

\bibliographystyle{cas-model2-names}

\bibliography{references}

\end{document}